\documentclass[a4paper, reqno, 12pt]{amsart}

\usepackage[utf8]{inputenc}
\usepackage{amsthm,amsfonts,amssymb,amsmath,amsxtra,amsrefs}
\usepackage{mathtools}
\usepackage{bm}
\usepackage{latexsym}
\usepackage{stmaryrd}
\usepackage{graphicx}
\usepackage{tikz}
\usepackage{tikz-cd}
\usepackage[margin=1.25in]{geometry}
\usepackage{mathrsfs}
\usepackage{bm}
\usepackage{comment}

\usepackage[all]{xy}
\SelectTips{cm}{}
\usepackage{xr-hyper}
\usepackage[colorlinks=false,
   citecolor=Black,
   linkcolor=Red,
   urlcolor=Blue]{hyperref}
\usepackage{verbatim}

\RequirePackage{xspace}
\RequirePackage{etoolbox}
\RequirePackage{varwidth}
\RequirePackage{enumitem}
\RequirePackage{tensor}
\RequirePackage{mathtools}
\RequirePackage{longtable}
\RequirePackage{multirow}

\usepackage{rotating}

\tikzset{
    labl/.style={anchor=south, rotate=90, inner sep=.5mm}
}

\tikzset{>=latex}
\tikzstyle{vthick}=[line width=1.8pt]

\newcommand\drawpath[2]{%
  \foreach \too [count=\c from 1] in {#1}
  {
  \ifthenelse{\c=1}
  {\xdef\from{\too}}
  {\path (\from) edge [->, #2] (\too);
    \xdef\from{\too}}
  };
}

\newtheorem{thm}{Theorem}
\newtheorem{theorem}[thm]{Theorem}
\newtheorem*{thmintro}{Theorem}

\newtheorem{prop}[thm]{Proposition}

\newtheorem{lem}[thm]{Lemma}

\newtheorem{cor}[thm]{Corollary}

\theoremstyle{definition}

\newtheorem{example}[thm]{Example}

\newtheorem{remark}[thm]{Remark}

\numberwithin{equation}{section}
\numberwithin{thm}{section}

\newcommand{\red}[1]{{\color{red}#1}}

\newcommand{\BC}{\ensuremath{\mathbb {C}}\xspace}

\newcommand{{\BG}}{\ensuremath{\mathbb {G}}\xspace}

\newcommand{{\BK}}{\ensuremath{\mathbb {K}}\xspace}

\newcommand{\BQ}{\ensuremath{\mathbb {Q}}\xspace}

\newcommand{\BT}{\ensuremath{\mathbb {T}}\xspace}

\newcommand{\BZ}{\ensuremath{\mathbb {Z}}\xspace}

\newcommand{\CA}{\ensuremath{\mathcal {A}}\xspace}
\newcommand{\CB}{\ensuremath{\mathcal {B}}\xspace}

\newcommand{\CI}{\ensuremath{\mathcal {I}}\xspace}

\newcommand{\CL}{\ensuremath{\mathcal {L}}\xspace}

\newcommand{\CO}{\ensuremath{\mathcal {O}}\xspace}
\newcommand{\CP}{\ensuremath{\mathcal {P}}\xspace}

\newcommand{\CU}{\ensuremath{\mathcal {U}}\xspace}

\newcommand{\fg}{\ensuremath{\mathfrak {g}}\xspace}
\newcommand{\fh}{\ensuremath{\mathfrak {h}}\xspace}

\newcommand{\fp}{\ensuremath{\mathfrak {p}}\xspace}
\newcommand{\fq}{\ensuremath{\mathfrak {q}}\xspace}

\newcommand{\x}{\times}

\newcommand{\bfi}{\textnormal{\textbf{i}}}
\newcommand{\bfj}{\textnormal{\textbf{j}}}

\newcommand{\bfs}{\textnormal{\textbf{s}}}

\newcommand{\bfw}{\textnormal{\textbf{w}}}

\newcommand{\uf}{\textnormal{uf}}
\newcommand{\fro}{\textnormal{fr}}
\newcommand{\sgn}{\textnormal{sgn}}

\newcommand{\ve}{\varepsilon}
\newcommand{\isoto}{\xrightarrow{\sim}}
\newcommand{\rank}{\textnormal{rank}}
\newcommand{\LP}{\CL\CP}
\newcommand{\Spec}{\textnormal{Spec }}
\newcommand{\wt}{\widetilde}
\newcommand{\Sym}{\textnormal{Sym}}
\newcommand{\st}{\textnormal{st}}
\newcommand{\Ad}{\textnormal{Ad}}

\begin{document}

\title[]{Cluster structures on $SL_n/SO_n$}

\author[Jeff York Ye]{Jeff York Ye}
\address{Department of Mathematics, National University of Singapore, Singapore.}
\email{e1124873@u.nus.edu}
\subjclass[2020]{} 

\begin{abstract}
We construct cluster structures on the strata $\mathring{S}_w$ of a stratification of the symmetric space $SL_n/SO_n$ over $\BC$. To accomplish this, we study foldings of upper cluster algebras and show that the cluster structures on $SL_n/SO_n$ can be obtained from those on $SL_n$ via folding. As a corollary, we show that these cluster structures are compatible with the De Concini Poisson structures, and we construct cluster structures on the variety of symmetric matrices $\Sym_n$.
\end{abstract}

\maketitle
	
\tableofcontents
 
\section{Introduction}
\label{sec:intro}

\subsection{Cluster algebras}
\label{sec:intro:cluster}

The theory of cluster algebras was developed by Fomin
and Zelevinsky \cite{FZ02} to study total positivity and dual canonical bases in semisimple
groups. As an important example, let $G$ be a simply connected, connected simple algebraic group over $\BC$. The coordinate ring of $G$, as well as the coordinate ring of any double Bruhat cell $G^{u,v}$, is a cluster algebra \citelist{\cite{BFZ05}\cite{QY25}\cite{Oya25}}.

The existence of a cluster structure on a variety has a wide range of consequences. For example, it provides a total positivity structure, Poisson structure \cite{GSV10}, and a cluster theoretic canonical basis on its coordinate ring \cite{GHKK18}.

Foldings for general cluster algebras were introduced in \citelist{\cite{Dem08}\cite{Dup08}}, as analogs of the foldings for Kac--Moody Lie algebras and quantum groups \citelist{\cite{Kac83}\cite{Lus93}}. Since the requirements for folding of cluster algebras are strict, it has been applied only in limited scenarios, for example \citelist{\cite{FG06}\cite{FST12}\cite{IM24}\cite{Zho20}}.

\subsection{Symmetric spaces}
\label{sec:intro:symspace}

Let $G$ be a connected simple algebraic group over $\BC$, $\theta$ be an involution of $G$, and $K$ be the fixed-point subgroup. The quotient $G/K$ is called a symmetric space, and it is isomorphic to a closed subvariety $S$ of $G$ via the embedding $gK\mapsto g\theta(g)^{-1}$. $S$ is a twisted conjugacy class of $G$, hence a Poisson submanifold of $(G,\pi_\theta)$ for a Poisson structure $\pi_\theta$ on $G$ constructed in \cite{Lu14}. The induced Poisson structure on $G/K$ coincides with De Concini's Poisson structure studied in \cite{EE05}*{Section~6.4}.

Let $\theta$ be quasi-split, i.e. we choose a pair of opposite Borel subgroups $B^\pm$ such that $\theta(B^+)=B^-$. Then $\theta$ induces an involution on the indices $I$ and the Weyl group $W$. For any $w\in W$, let $\mathring{S}_w$ be the intersection of $S$ with the double Bruhat cell $G^{w,\theta(w)^{-1}}$ in $SL_n$. These sets define a stratification of $S$, and serve as analogs of double Bruhat cells in symmetric spaces. For example, they are $T$-leaves of $\pi_\theta$ by \cite{Lu14}*{Theorem~1.1}. 

For the diagonal case $(G\x G,\Delta(G))$, the usual double Bruhat cell $G^{u,v}$ of $G$ coincides with the symmetric space double Bruhat cell $\mathring{S}_{(u,v^{-1})}$. On the cluster side, the quiver for the corresponding double Bruhat cell in $G\x G$ is a disjoint union of two copies of the quiver for $G^{u,v}$, and the folding construction is simply identifying the two copies. By \cite{Bao26}*{Example~3.6}, the symmetric space has a total positivity structure that is compatible with that on $G$, which is well known to coincide with the cluster positivity structure.

\subsection{Symmetric matrices}
\label{sec:intro:symmatrix}

Let $M_n$ be the variety of $n\x n$ matrices over $\BC$ and $\Sym_n$ be the subvariety of symmetric matrices. A cluster structure on $M_n$ was constructed in \cite{FWZ21b}, where the cluster variables are generalized minors parametrized by double wiring diagrams. This cluster structure is a special case of that on more general types of reductive monoids \cite{SY25}.

Meanwhile, a Laurent phenomenon involving certain minors in $\Sym_n$ was established in \cite{STW16}, similar to the famous Laurent phenomenon of cluster algebras. This suggests that $\Sym_n$ should also have a cluster structure, which is previously unknown.

\subsection{Main results}
\label{sec:intro:results}

The main goal of this paper is to show that $SL_n/SO_n$ has a cluster structure. This gives the first results for cluster structures on symmetric spaces apart from the diagonal case, and we expect many of the results in this paper to hold for more general symmetric pairs.

\begin{thmintro}[Theorem~\ref{thm:MainThm} \& Corollary~\ref{cor:SLn/SOn}, \ref{cor:Symn}]
\label{thm:Main1}
The coordinate rings of the following varieties are cluster algebras:
\begin{itemize}
    \item the double Bruhat cells $\mathring{S}_w$ for the symmetric pair $(SL_n,SO_n)$;
    \item the symmetric space $SL_n/SO_n$;
    \item the variety of symmetric matrices $\Sym_n$.
\end{itemize}
Furthermore, the cluster algebras above coincide with their upper cluster algebras.
\end{thmintro}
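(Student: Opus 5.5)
The plan is to deduce all three statements from the known cluster structures on double Bruhat cells of $SL_n$ by folding. The symmetric space embeds as $S=\{g\theta(g)^{-1}\}$. For $SO_n$ we may take $\theta(g)=J(g^{T})^{-1}J$, where $J$ is the antidiagonal matrix. Then $\theta(B^+)=B^-$, and $S$ consists of the matrices $x$ with $xJ$ symmetric. Consequently $\mathring{S}_w=S\cap G^{w,\theta(w)^{-1}}$ is the fixed locus of the involution $\sigma\colon x\mapsto J x^{T} J$ on the double Bruhat cell $G^{w,\theta(w)^{-1}}$, intersected with the appropriate component.

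\textbf{Step 1 (compatible seed).} For $G^{u,v}$ with $u=w$ and $v=\theta(w)^{-1}$, I would take the Berenstein--Fomin--Zelevinsky seed attached to a double reduced word. I would choose the word palindromic with respect to $\theta$: a reduced word $\bfi$ for $w$, together with the word for $v$ obtained by applying $\theta$ to the letters of $\bfi$ and reversing, interleaved symmetrically. The task is to check that $\sigma^*$ permutes the generalized minors $\Delta_{u_{\le k}\omega_i,\,v_{>k}^{-1}\omega_i}$. Transposition swaps the two weight arguments, and conjugation by $J$ applies the diagram automorphism $i\mapsto n-i$. The goal is an involution $\tau$ on the index set of the quiver satisfying:
\begin{itemize}
\item $\sigma^*x_j=x_{\tau(j)}$;
\item $\tau$ is an automorphism of the exchange matrix $\widetilde B$;
\item the folding admissibility conditions of \cite{Dup08} hold: there are no arrows between $j$ and $\tau(j)$, and $b_{j,k}b_{\tau(j),k}\ge0$.
\end{itemize}

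\textbf{Step 2 (general folding statement).} Next I would prove an abstract result on upper cluster algebras. Let $\CU$ be a full-rank upper cluster algebra with an admissible involution $\tau$ that is stable under all $\tau$-orbit mutations, and let $\CI$ be the ideal generated by the differences $x_j-x_{\tau(j)}$. Then $\CU/\CI$ should coincide with the upper cluster algebra of the folded seed. Here the folded seed has one variable per orbit and exchange matrix $b_{[j],[k]}=\sum_{j'\in[j]}b_{j',k}$. For the geometric version, the proof would go through the Starfish lemma: the folded cluster variables are regular functions on the fixed locus, the folded initial cluster gives an open torus chart, and the adjacent clusters cover the complement up to codimension $2$. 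Normality of $\mathring{S}_w$ and the codimension estimate come from $\mathring{S}_w$ being a smooth $T$-leaf, by \cite{Lu14}. I would then apply this to conclude that $\BC[\mathring{S}_w]$ is the folded upper cluster algebra.

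\textbf{Step 3 (equality $\CA=\CU$).} To obtain $\CA=\CU$, I would show that the folded seed is acyclic or locally acyclic. Since the BFZ seeds for double Bruhat cells are locally acyclic and admit Muller's Banff covering, I would try to fold the Banff algorithm $\tau$-equivariantly, choosing covering pairs in $\tau$-orbits. Alternatively, I could check that the folded seed has a reddening or green-to-red sequence together with the relevant source/sink structure.

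\textbf{Step 4 (the whole space and $\Sym_n$).} For $SL_n/SO_n$, I would take $w=w_0$ together with the frozen minors being nonvanishing. The big cell is then dense, and its complement is cut out by frozen variables. Allowing the frozen variables whose vanishing gives the other strata to be non-invertible, the Starfish argument extends and yields the corollary. For $\Sym_n$, right multiplication by $J$ identifies $S$ with the symmetric matrices of determinant $\pm1$, up to sign. Homogenizing with the determinant, in analogy with $M_n$ in \cite{FWZ21b}, then gives the cluster structure on $\Sym_n$.

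The main obstacle is Step 1 together with the folding admissibility. The BFZ quiver's arrows between $j$ and $\tau(j)$, particularly near the middle of the palindromic word, might violate admissibility. In that case I would first mutate to a different seed, for example one indexed by chamber minors in a symmetric double wiring diagram, and verify admissibility there.
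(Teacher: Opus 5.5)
\textbf{There is a genuine gap in Step 2.} Your overall architecture matches the paper's: fold the BFZ seed of $G^{w,w^{-1}}$ along the involution, pass full rank to the folded seed, finish with a Starfish/upper-bound argument, then partially compactify. But the step that carries all the weight is unsupported. The Starfish argument (the paper's Lemma~\ref{lem:Codim2Cluster}) needs the loci $Z^S_{ij}=\{A^S_i=A^S_j=0\}$ and $Z^S_{kk}=\{A^S_k=\mu_k(A^S_k)=0\}$ to have codimension $\geq 2$ in $\mathring{S}_w$. You claim this follows from $\mathring{S}_w$ being a smooth $T$-leaf.

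Smoothness gives only normality, i.e.\ that $\BC[\mathring{S}_w]$ is the intersection of its localizations at height-one primes. It says nothing about whether two folded minors lie in a common height-one prime. In \cite{BFZ05} the codimension estimate comes from factoriality together with irreducibility of the minors. That mechanism is unavailable here because $\mathring{S}_w$ is not factorial: for $SL_2$, $\BC[\mathring{S}_{w_0}]\cong\BC[x,y,z^{\pm1}]/(xy-1-z^2)$, where $x,y$ are irreducible but not prime. Nor can you restrict the estimate from $G^{w,w^{-1}}$. There $\mathring{S}_w$ has dimension $\ell(w)+r$ inside a cell of dimension $2\ell(w)+r$, and it is a fixed-point locus, not a subvariety in general position. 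A codimension-$2$ subset of the cell can meet it in a divisor. So Step 2 does not establish $\CU(\wt{\bfs}(\bfi))\cong\BC[\mathring{S}_w]$, and Steps 3--4 inherit the gap.

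\textbf{How the paper gets the estimate.} It argues by induction on $w$.
\begin{itemize}
\item Lemma~\ref{lem:HartogsConverse} and Corollary~\ref{cor:ClusterCodim2} show that once a stratum is known to be $\Spec\CU(\bfs)$ for a full-rank seed, its $Z_{ij}$ and $Z_{kk}$ automatically have codimension $\geq 2$.
\item For $\ell(s_iws_i)=\ell(w)+2$, take a suitable mutable variable $A^S_k$ of $\wt{\bfs}(-\bfw,-i,i)$. Proposition~\ref{prop:WVDecomposition} identifies the pieces $\{A^S_k\neq 0\}$ and $\{A^S_k=0\}$ of $\mathring{S}_{s_iws_i}$ with $\BC^\times\times\mathring{S}_{ws_i}$ and $\BC\times\mathring{S}_w$, compatibly with seeds. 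This transports the estimate (Proposition~\ref{prop:InductionStep}).
\item Reflection maps give quasi-cluster isomorphisms $\mathring{S}_{s_iw}\cong\mathring{S}_{ws_i}$ (Corollary~\ref{cor:ClusterReflection}).
\item By \cite{GP93}, everything reduces to minimal-length elements of conjugacy classes. There each simple reflection occurs at most once and the seed is acyclic, so Corollary~\ref{cor:FoldAcyclic} with Proposition~\ref{prop:UpperCAKernel} gives the isomorphism directly.
\end{itemize}
Local acyclicity falls out of the same induction. The vertex $k$ is a sink, and $A_k$ together with the product of its mutable neighbours gives a cover by cluster localizations. So no equivariant Banff argument is needed. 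You need this induction, or some other genuine source of the codimension estimate.

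\textbf{Smaller corrections.}
\begin{itemize}
\item With $\theta(g)=Jg^{-T}J$ one has $\theta(B^+)=B^+$, not $B^-$. The map $x\mapsto Jx^TJ$ sends $G^{u,v}$ to $G^{w_0u^{-1}w_0,\,w_0v^{-1}w_0}$, so it does not act on your cells; your $S\cap B^+wB^+$ is even empty unless $w=w_0w^{-1}w_0$. Take $\theta(g)=g^{-T}$ instead. Then the involution is transposition and $\mathring{S}_w=S\cap G^{w,w^{-1}}$. With the word $(-\bfi^{-1},\bfi)$ the middle vertices $(i,m_i)$ are fixed points, which disposes of your admissibility worry.
\item The global-foldability hypothesis and the claimed surjection onto the folded upper cluster algebra in Step 2 are unverified, but also unnecessary. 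Lemma~\ref{lem:FoldVariableMutation} works for single orbit mutations, and full rank passes to the fold by Lemma~\ref{lem:FullRankFolding}.
\item In Step 4, extending to $S$ and $\Sym_n$ requires the frozen variables to be prime in $\BC[S]$ and $\BC[\Sym_n]$ (Proposition~\ref{prop:CptFold}). This must be checked, not assumed.
\end{itemize}
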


We use the strata $\mathring{S}_w$ to study the cluster structures on symmetric spaces. This is a non-trivial choice as it allows us to take advantage of the Berenstein--Fomin--Zelevinsky cluster structures on $G^{u,v}$. We then use folding to obtain the seeds for $\mathring{S}_w$, based on calculations on the cluster minors in the seeds. This is a new application of folding, where we do not impose strong assumptions on the seeds.

The main difficulty of the proof is that the variety $\mathring{S}_w$ is not always factorial, so the same method as in \cite{BFZ05} does not apply. Specifically, we do not have the codimension $\geq2$ property immediately from the irreducibility of the cluster variables. We develop other methods to establish the cluster structure: we perform induction on $w$ similar to the deletion-contraction recurrence deployed in \cite{GLS26} and reduce to the minimal length elements in the conjugacy classes of the Weyl group.

Along the way, we show that the seeds for $\mathring{S}_w$ are locally acyclic, and the cluster structures are compatible with the Poisson structure on $S$. We also obtain cluster structures on $SL_n/SO_n$ and $\Sym_n$ by partial compactification, which are folded from that on $SL_n$ and $M_n$ respectively.

We note that the candidate seeds for $\mathring{S}_w$ constructed in Section~\ref{sec:cluster:seed} can be generalized directly to other quasi-split types except type $\text{AIII}_n$ with $n$ even. However, the exact method in this paper does not apply due to the different structures of the (twisted) conjugacy classes.

Finally, we record a cluster isomorphism between $\Sym_n$ and a Schubert cell for $Sp_{2n}$. Let $\CB$ be the flag variety for $Sp_{2n}$ and $\mathring{\CB}_{w^J}$ be the Schubert cell for $w^J$, the minimal length representative for the longest element in the quotient $C_n/A_{n-1}$. Cluster structures on Schubert cells were studied in \cite{GLS11}. We show that there is an isomorphism $\Phi:\Sym_n\isoto\mathring{\CB}_{w^J}$ respecting the cluster structures.

\vspace{.2cm}

\noindent {\bf Acknowledgment: } The author would like to thank Huanchen Bao for raising the conjecture in Remark~\ref{rem:Conjecture} and many valuable discussions leading to this project.

\section{Preliminaries}
\label{sec:prelim}

\subsection{Cluster algebras}
\label{sec:prelim:cluster}

A \textit{(labeled) seed} $\bfs$ is a quadruple $(J, J_\uf,(\ve_{ij})_{i,j\in J}, (d_i)_{i\in J})$ consisting of the following data:
\begin{itemize}
    \item An index set $J$, together with a subset $J_\uf\subset J$. The elements in $J$ are called vertices. The vertices in $J_\uf$ are called \textit{mutable} or \textit{unfrozen}. The vertices in $J_\fro=J\backslash J_\uf$ are called \textit{frozen}.
    \item A skew-symmetrizable matrix $(\ve_{ij})_{i,j\in J}$ over $\BQ$ with $\ve_{ij}\in \BZ$ if at least one of $i,j$ is in $J_\uf$, called the \textit{exchange matrix}.
    \item Positive rational numbers $d_i$ for $i\in J$ such that $d_i\ve_{ij}=-d_j\ve_{ji}$, called the \textit{symmetrizers}.
\end{itemize}

The exchange matrix is also interpreted as the adjacency matrix of a \textit{quiver}, which is a directed graph with no loops or oriented 2-cycles. The quiver will have weighted edges when the exchange matrix is not skew-symmetric.

Let $[a]_+=\max(a,0)$ for $a \in \BQ$. For any $k\in J_\uf$, the \textit{mutation at $k$} gives a new seed $\bfs'=\mu_k(\bfs)$ with the same $J$, $J_\uf$ and $(d_i)_{i \in J}$, and a new exchange matrix $(\ve'_{ij})_{i,j\in J}$ given by
\[
\ve'_{ij} = \begin{cases}-\ve_{ij}, & \text{if } k \in \{i,j\};\\
\ve_{ij}+[\ve_{ik}]_+\ve_{kj}+\ve_{ik}[-\ve_{kj}]_+, &\text{otherwise}.
\end{cases}
\]

We can repeat this process mutating at arbitrary $k\in J_\uf$. A seed $\bfs'$ is said to be \textit{mutation equivalent} to $\bfs$, denoted by $\bfs'\sim \bfs$, if it can be obtained from $\bfs$ by a sequence of mutations.

For any seed $\bfs_0$, the (abstract) upper cluster algebra with initial seed $\bfs_0$ is defined as follows. We fix the ambient field $K$ to be the field of rational functions over $\BC$ in $n$ variables, generated by the formal variables $A_i$ for $i\in J$. The variables $\{A_{i}=A_{i,\bfs_0}\}$ are called the \textit{cluster variables} in the seed $\bfs_0$. The variables $\{A_i \,|\, i\in J_\fro\}$ are  called \textit{frozen variables}.

Given the cluster variables $\{A_{i,\bfs}\}_{i \in J}$ in a seed $\bfs\sim \bfs_0$, the cluster variables $\{A_{i,\bfs'}\}_{i \in J}$ in $\bfs'=\mu_k(\bfs)$ are elements of $K$ defined by 
\[
\begin{cases}
A_{i,\bfs'}=A_{i,\bfs}, &\text{for $i\neq k$};\\
A_{k,\bfs'} = \frac{1}{A_{k,\bfs}}\left(\prod_{i\in J}A_{i,\bfs}^{[\ve_{ki}]_+}+\prod_{i\in J}A_{i,\bfs}^{[-\ve_{ki}]_+}\right), &\text{for $i = k$.}
\end{cases}
\]

The \textit{partially compactified cluster algebra} $\overline{\CA}(\bfs_0)$ is defined to be the $\BC$-algebra generated by the cluster variables over all seeds $\bfs\sim\bfs_0$. The \textit{cluster algebra} $\CA(\bfs_0)$ is obtained by inverting all the frozen variables in $\overline{\CA}(\bfs_0)$.

For any seed $\bfs\sim\bfs_0$, let $\LP(\bfs)=\BC[A_{i,\bfs}^{\pm1}]_{i\in J}$ and $\overline{\LP}(\bfs)=\BC[A_{i,\bfs}^{\pm1}, A_j]_{i\in J_\uf,j\in J_\fro}$.
The \textit{upper cluster algebra} $\CU(\bfs_0)$ is defined as the intersection
\[
\CU(\bfs_0)=\bigcap_{\bfs\sim \bfs_0}\LP(\bfs).
\]
The \textit{partially compactified upper cluster algebra} $\overline{\CU}(\bfs_0)$ is defined as the intersection
\[
\overline{\CU}(\bfs_0)=\bigcap_{\bfs\sim \bfs_0}\overline{\LP}(\bfs).
\]

It is clear that mutation equivalent seeds give canonically isomorphic upper cluster algebras, so the construction only depends on the mutation equivalence class of $\bfs_0$. By the Laurent phenomenon \cite{FZ03}*{Proposition~11.2}, all cluster variables belong to $\overline{\CU}$.

In this paper, we show that certain coordinate rings of varieties are isomorphic to cluster algebras. We will abuse notation and sometimes view the cluster variables as explicit functions on the variety.

For any seed $\bfs\sim \bfs_0$ and $i\in J$, let $\nu^\bfs_i$ be the valuation on $F$ induced from the valuation on $\LP(\bfs)$ given by the vanishing order of $A_{i,\bfs}$. By \cite{Qin25}*{Lemma~2.12}, for $i\in J_\fro$, $\nu^\bfs_i$ is independent of the seed $\bfs$, so we will omit the superscript. In particular, we have
\begin{equation}\label{eq:PartialCompactification}
\overline{\CU}=\{f\in\CU\mid \nu_i(f)\geq 0,\text{for }i\in J_\fro\}=\CU\cap \overline{\LP}(\bfs).
\end{equation}

We also recall the full rank and acyclic properties for upper cluster algebras. A seed $\bfs$ is said to have \textit{full rank} if the submatrix $(\ve_{ij})_{i\in J_\uf,j\in J}$ has full rank. In this case, the upper cluster algebra can be determined only using $\bfs$ and its adjacent seeds, i.e. the seeds obtained from $\bfs$ by applying a single mutation.

\begin{thm}[\cite{BFZ05}*{Corollary~1.9}]
\label{thm:FullRankUpperBound}
Let $\bfs$ be a seed with full rank. We have
\[
\CU(\bfs)=\LP(\bfs)\cap \bigcap_{k\in J_\uf}\LP(\mu_k(\bfs)).
\]
\end{thm}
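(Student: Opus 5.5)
The inclusion $\CU(\bfs)\subseteq \LP(\bfs)\cap\bigcap_{k\in J_\uf}\LP(\mu_k(\bfs))$ holds by definition, since the right side is an intersection over only some of the seeds mutation equivalent to $\bfs$. Write $\CU_1(\bfs)$ for the right-hand side, the "upper bound" of $\bfs$. The content of the statement is the reverse inclusion. We reduce it to showing that the upper bound is invariant under mutation: $\CU_1(\bfs)=\CU_1(\mu_k(\bfs))$ for every $k\in J_\uf$. Granting this, and using that full rank is preserved by mutation, we get $\CU_1(\bfs)=\CU_1(\bfs')$ for every $\bfs'\sim\bfs$. Since $\CU_1(\bfs')\subseteq\LP(\bfs')$, it follows that $\CU_1(\bfs)\subseteq\bigcap_{\bfs'\sim\bfs}\LP(\bfs')=\CU(\bfs)$.

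Full rank is preserved because $\mu_k$ acts on the matrix $(\ve_{ij})_{i\in J_\uf,j\in J}$ by multiplication by invertible integer matrices $E$ and $F$, as in \cite{BFZ05}*{Lemma~3.2}. Since the mutation formula is unchanged if we multiply the frozen rows and columns appropriately, we may work with this extended exchange matrix directly.

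For mutation invariance, we follow the strategy of \cite{BFZ05}. Fix $k$, put $\bfs_1=\mu_k(\bfs)$, and compare $\CU_1(\bfs)$ and $\CU_1(\bfs_1)$. Both contain $\LP(\bfs)\cap\LP(\bfs_1)\cap\bigcap_{j\neq k}\LP(\mu_j\bfs)\cap\LP(\mu_j\bfs_1)$. By symmetry between $\bfs$ and $\bfs_1$, it therefore suffices to prove the following: for $j\neq k$,
\[
\LP(\bfs)\cap\LP(\bfs_1)\cap\LP(\mu_j\bfs)\subseteq\LP(\mu_j\bfs_1).
\]

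Let $P_k^\pm$ denote the two exchange monomials at $k$. The key first step is a description of a two-seed intersection:
\[
\LP(\bfs)\cap\LP(\mu_k\bfs)=\BC[A_i^{\pm1}]_{i\neq k}[A_k,A_k'],
\]
that is, Laurent in the $A_i$ with $i\neq k$ and polynomial in $A_k$ and $A_k'$. This holds provided $P_k^+$ and $P_k^-$ are coprime, which full rank guarantees: row $k$ of the exchange matrix is nonzero, so the two monomials $\prod_i A_i^{[\ve_{ki}]_+}$ and $\prod_i A_i^{[-\ve_{ki}]_+}$ have disjoint supports and are not both $1$. The proof writes $f$ as a Laurent polynomial in $A_k$ with coefficients in $\BC[A_i^{\pm1}]_{i\neq k}$, substitutes $A_k=(P^++P^-)/A_k'$, and uses coprimality of $A_k'$-expansions to kill the negative powers.

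The remaining step is the three-seed statement for the pair $j,k$. Here the main obstacle is coprimality of the exchange polynomials in different directions: we need $P_j^++P_j^-$ and $P_k^++P_k^-$ to be coprime in the relevant Laurent rings. This is exactly where full rank is used again, since the rows $j$ and $k$ are linearly independent and not proportional. I would split into the cases $\ve_{jk}=0$ and $\ve_{jk}\neq0$.
\begin{itemize}
\item If $\ve_{jk}=0$, the two mutations commute and the inclusion is immediate from the two-seed description.
\item If $\ve_{jk}\neq0$, I would localize at all $A_i$ with $i\neq j,k$, reducing to a rank-two computation in the variables $A_j,A_k$ with coefficients. There one expresses elements of $\LP(\bfs)\cap\LP(\bfs_1)\cap\LP(\mu_j\bfs)$ in the standard monomial basis of \cite{BFZ05}*{Section~4}, namely $A_j^{a}A_k^{b}A_j'^{c}A_k'^{d}$ with $ac=bd=0$, and checks that each such monomial remains Laurent after the further mutation at $j$ from $\bfs_1$.
\end{itemize}
If a direct computation becomes unwieldy, I would instead cite the corresponding lemma of \cite{BFZ05}, since this statement is exactly \cite{BFZ05}*{Corollary~1.9}.
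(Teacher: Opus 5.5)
You have the right overall route. The paper gives no proof of this statement and simply cites \cite{BFZ05}, and your outline follows \cite{BFZ05}. The following steps are all correct: the trivial inclusion, the reduction to mutation invariance of the upper bound, preservation of full rank under $\mu_k$, and the reduction to the three-seed inclusion $\LP(\bfs)\cap\LP(\mu_k\bfs)\cap\LP(\mu_j\bfs)\subseteq\LP(\mu_j\mu_k\bfs)$ for $j\neq k$. A minor point: the two-seed description $\LP(\bfs)\cap\LP(\mu_k\bfs)=\BC[A_i^{\pm1}]_{i\neq k}[A_k,A_k']$ needs no hypothesis, and your argument for it never uses coprimality of $P_k^\pm$.

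The gap is in the one step where full rank is actually used. Your claim that the case $\ve_{jk}=0$ is immediate from the two-seed description is false. Take $J=\{j,k,f\}$, $J_\uf=\{j,k\}$, $\ve_{jf}=\ve_{kf}=1=-\ve_{fj}=-\ve_{fk}$, and all other entries $0$, so that $A_jA_j'=A_kA_k'=1+A_f$. The element $A_k'/A_j$ is Laurent in the clusters of $\bfs$, $\mu_k\bfs$ and $\mu_j\bfs$: it equals $(1+A_f)/(A_jA_k)$, $A_k'A_j^{-1}$ and $A_j'/A_k$ respectively. But in $\mu_j\mu_k\bfs$ it equals $A_j'A_k'/(1+A_f)$, which is not Laurent. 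So even when the mutations commute, you must use that $P_j$ and $P_k$ are coprime.

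With coprimality the commuting case does go through. Put $S=\BC[A_i^{\pm1}]_{i\neq j,k}$ and write $f=\sum_{a\ge0}c_aA_k^a+\sum_{b>0}d_bA_k'^b$ with $c_a,d_b\in S[A_j^{\pm1}]$. Comparing $A_k$-Laurent coefficients with $f\in\LP(\mu_j\bfs)$ gives $c_a\in S[A_j'^{\pm1}]$ and $P_k^bd_b\in S[A_j'^{\pm1}]$. Since $P_j,P_k\in S$ are coprime in the UFD $S$, you can cancel $P_k^b$. Hence $c_a,d_b\in S[A_j^{\pm1}]\cap S[A_j'^{\pm1}]=S[A_j,A_j']$.

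In the case $\ve_{jk}\neq0$ you take for granted that every element of the triple intersection is an $S$-combination of standard monomials in $A_j,A_j',A_k,A_k'$. In other words, you assume $\LP(\bfs)\cap\LP(\mu_k\bfs)\cap\LP(\mu_j\bfs)=S[A_j,A_j',A_k,A_k']$. That identity is the real content of the key lemma in \cite{BFZ05}, and it is exactly where coprimality of $P_j$ and $P_k$ enters. Once it is known, whether or not $\ve_{jk}=0$, no case split is needed. The inclusion into $\LP(\mu_j\mu_k\bfs)$ is then just the Laurent phenomenon, since $A_j,A_j',A_k,A_k'$ are cluster variables.

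As written, your proposal never proves this rank-two lemma. In one case it replaces it with an incorrect claim, and in the other it assumes the conclusion. Citing \cite{BFZ05} for it is legitimate, and that is all the paper does; but then your case analysis contributes nothing beyond the citation.
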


A seed $\bfs$ is said to be \textit{acyclic} if its quiver does not contain oriented cycles of mutable vertices.

\begin{thm}[\cite{BFZ05}*{Theorem~1.18}]
\label{thm:AcyclicSeed}
Let $\bfs$ be an acyclic seed with full rank. Then $\CA(\bfs)=\CU(\bfs)$ and $\CA(\bfs)$ is generated by the cluster variables in $\bfs$ and its adjacent seeds along with the inverses of the frozen variables.
\end{thm}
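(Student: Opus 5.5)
Since this is a cited result (Theorem 1.18 of Berenstein--Fomin--Zelevinsky), the plan is to reproduce their argument. Let $\bfs$ be acyclic of full rank, with cluster variables $A_i$ and mutated variables $A_k'$ for $k\in J_\uf$. Let $\CA_0$ be the subalgebra generated by the $A_i$, the $A_k'$, and the $A_j^{-1}$ for $j\in J_\fro$. We have $\CA_0\subseteq\CA(\bfs)\subseteq\CU(\bfs)$. Hence both assertions follow once we show $\CU(\bfs)\subseteq\CA_0$. By Theorem~\ref{thm:FullRankUpperBound}, it suffices to show
\[
\LP(\bfs)\cap\bigcap_{k\in J_\uf}\LP(\mu_k(\bfs))\subseteq\CA_0 .
\]

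The first step is a standard monomial basis for $\CA_0$. A standard monomial is a product $\prod_i A_i^{a_i}\prod_k (A_k')^{b_k}$ with $a_jb_j=0$ for every mutable $j$, and frozen exponents in $\BZ$. For each $k$ write $A_kA_k'=P_k$, where $P_k$ is the exchange binomial. Using $P_k$, every element of $\CA_0$ can be rewritten as a combination of standard monomials, so these span $\CA_0$. For linear independence, order the mutable vertices so that all arrows among them go in one direction, which is possible by acyclicity. Then take a term order in which the leading Laurent term of a standard monomial in the $A_i$ is just $\prod A_i^{a_i-b_i}$, times a monomial in the neighbours of $k$ coming from $P_k$. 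Full rank makes the map from exponent vectors $(a,b)$ to leading terms injective. Acyclicity guarantees that the correction coming from $P_k$ involves only variables earlier in the order, so the leading exponents are triangular.

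The second step, which is the main obstacle, is to show that every $f$ in the intersection lies in the span of the standard monomials. I would induct on the mutable vertices. Write $f\in\LP(\bfs)$ as a Laurent polynomial, and consider the most negative power of $A_k$ occurring, say $A_k^{-m}$ with $m>0$. The condition $f\in\LP(\mu_k(\bfs))$ means that substituting $A_k=P_k/A_k'$ yields a Laurent polynomial in $A_k'$ and the other variables. This forces the coefficient of $A_k^{-m}$ to be divisible by $P_k^m$, once the other $A_i$ are treated as independent; here the coprimality of $P_k$ with the variables uses that row $k$ is nonzero, which follows from full rank. Subtracting the corresponding multiple of $(A_k')^m$ reduces $m$. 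The delicate part is that the subtracted term must itself remain in the intersection and must not reintroduce denominators at previously treated vertices. Acyclicity is exactly what controls this: process vertices along a sink-to-source order, so that $P_k$ involves $A_j$ only for $j$ already cleared, and only with nonnegative exponents.

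At the end $f$ becomes a Laurent polynomial with no negative powers of mutable variables, so it lies in $\BC[A_i,A_j^{\pm1}]\subseteq\CA_0$. Combined with the first step, this gives $\CU(\bfs)=\CA_0=\CA(\bfs)$, with the stated generators. I expect the bookkeeping of the degree reduction, namely showing termination and that the intersection property persists after each subtraction, to be the hardest step. I would first try to handle it by a lexicographic induction on the vector of pole orders $(m_k)_k$ in the acyclic order.
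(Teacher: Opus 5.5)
The paper does not prove this statement; it quotes \cite{BFZ05}*{Theorem~1.18}. Your reduction to $\CU(\bfs)\subseteq\mathcal{A}_0$ via Theorem~\ref{thm:FullRankUpperBound} is fine. However, Step~2, which carries the whole content of the theorem, does not work as proposed.

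Comparing powers of $A_k'$ does show that any $f\in\LP(\bfs)\cap\LP(\mu_k(\bfs))$ can be written as $\sum_{r\geq0}c_rA_k^r+\sum_{s>0}d_s(A_k')^s$ with $c_r,d_s\in\BC[A_i^{\pm1}]_{i\neq k}$. But the subtracted pieces and the remainder generally leave $\LP(\mu_j(\bfs))$ for $j\neq k$, so nothing remains to run the argument at the next vertex.

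Your proposed control cannot be arranged. You want an order in which $P_k$ involves only already-cleared variables, but $P_k$ contains every mutable neighbour of $k$ (out-neighbours in one monomial, in-neighbours in the other). So the first vertex processed always has uncleared variables in $P_k$, unless it is isolated.

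Type $A_2$ already shows the procedure stalls. Take $\ve_{12}=1=-\ve_{21}$ and no frozen vertices, so $A_1A_1'=1+A_2$ and $A_2A_2'=1+A_1$. Let $f=(1+A_1+A_2)/(A_1A_2)=A_1'A_2'-1\in\mathcal{A}_0$. Clearing the pole at vertex $2$ gives $f=A_2'/A_1+1/A_1$. The remainder $1/A_1$ is not in $\LP(\mu_1(\bfs))$, because the coefficient $1$ of $A_1^{-1}$ is not divisible by $P_1=1+A_2$. Hence $A_2'/A_1$ is not in $\LP(\mu_1(\bfs))$ either. Clearing at vertex $1$ first fails symmetrically. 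The identity $f=A_1'A_2'-1$ is invisible to termwise subtraction, so what you defer as bookkeeping is in fact the missing idea.

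One way to supply it is to replace subtraction by localization, in the spirit of \cite{Mul13}, and induct on $|J_\uf|$.
\begin{itemize}
\item If some arrow joins mutable vertices, pick a sink (or source) $k$ of the mutable quiver that has a nonempty set $N$ of mutable neighbours. Then $A_kA_k'=p+qM$, where $p,q$ are frozen monomials and $M=\prod_{i\in N}A_i^{|\ve_{ki}|}$. Since $p$ is a unit, $A_k$ and $M$ generate the unit ideal of $\mathcal{A}_0$.
\item Freezing $k$ (resp.\ $N$) gives an acyclic full-rank seed with fewer mutable vertices. By Theorem~\ref{thm:FullRankUpperBound}, its upper cluster algebra contains $\CU(\bfs)$. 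By induction, $f\in\mathcal{A}_0[A_k^{-1}]\cap\mathcal{A}_0[M^{-1}]=\mathcal{A}_0$.
\item If no arrow joins mutable vertices, each $P_k$ is a binomial in the frozen variables. Full rank makes their exponent vectors linearly independent, so the $P_k$ are pairwise coprime in the UFD $\BC[A_j^{\pm1}]_{j\in J_\fro}$. Only in this case does your coefficientwise divisibility argument go through, monomial by monomial.
\end{itemize}

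Finally, your Step~1 is not needed for $\CU(\bfs)\subseteq\mathcal{A}_0$. Also, the injectivity of leading exponents there comes from acyclicity (triangularity together with $a_kb_k=0$), not from full rank.
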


A seed $\bfs$ is said to be \textit{locally acyclic} if $\Spec \CA(\bfs)$ has a finite cover by acyclic cluster charts, and in that case we have $\CA(\bfs)=\CU(\bfs)$ \cite{Mul13}.

\subsection{Double Bruhat cells}
\label{sec:prelim:dbc}

Let $G$ be a simply connected, connected simple algebraic group of rank $r$ over $\BC$. Let $B^+,B^-$ be two opposite Borel subgroups of $G$, $U^+,U^-$ their unipotent radical, $T=B^+\cap B^-$ the maximal torus, and $W$ the Weyl group.

Let $\{\alpha_i\}_{i\in I}$ be the simple roots, $\{\alpha_i^\vee\}_{i\in I}$ the simple coroots, $\{\omega_i\}_{i\in I}$ the fundamental weights. We have the Cartan matrix $(a_{ij})_{i,j\in I}$ given by $a_{ij}=\alpha_j(\alpha_i^\vee)$. We also have $\omega_j(\alpha_i^\vee)=\delta_{ij}$.

We fix a pinning $(T,B^+,B^-,x_i,y_i;i\in I)$ of $G$. For each $i\in I$, the maps
\begin{equation*}
\begin{pmatrix}
1&a\\
0&1
\end{pmatrix}\mapsto x_i(a),
\begin{pmatrix}
a&0\\
0&a^{-1}
\end{pmatrix}\mapsto \alpha_i^\vee(a),
\begin{pmatrix}
1&0\\
a&1
\end{pmatrix}\mapsto y_i(a)
\end{equation*}
define an embedding $SL_2\to G$. Let $\dot{s}_i=x_i(1)y_i(-1)x_i(1)$. The elements $\dot{s}_i$ satisfy the braid relations in $W$, so we can define the lift $\dot{w}\in G$ for any $w\in W$ using any reduced expression.

Let $\fg$ be the Lie algebra of $G$ with Killing form $(-,-)$, and $\fh$ be the Lie algebra of $T$. Let $\Phi$ be the root system of $\fg$, with $\Phi^+$ the subset of positive roots. We fix a Chevalley basis $E_\alpha$ for $\alpha\in \Phi$ and $H_i$ for $i\in I$.

Let $V(\lambda)$ be the highest weight representation of $G$ with highest weight $\lambda$, and $\eta_\lambda$ a choice of highest weight vector. For any $w\in W$, the extremal weight space of $w\lambda$ is 1-dimensional spanned by $\dot{w}\eta_\lambda$. Then for any $v,w\in W$, we define the \textit{generalized minor} $\Delta_{v\lambda,w\lambda}$ to be the regular function on $G$ given by
\[
\Delta_{v\lambda,w\lambda}(g)=\{ \dot{v}\eta_\lambda, g\dot{w}\eta_\lambda\},
\]
the coefficient of $\dot{v}\eta_\lambda$ in the weight space decomposition of $g\dot{w}\eta_\lambda$. Note that $\Delta_{v\lambda,w\lambda}=\Delta_{\lambda,\lambda}(\dot{v}^{-1}g\dot{w})$.

Alternatively, let $G_0=U^-TU^+$ be the open set in $G$ containing elements $g\in G$ that have Gaussian decomposition. The decomposition is unique and we will write it as $g=g_-g_0g_+$. For any dominant weight $\lambda$, and $g\in G_0$,  we have $\Delta_{\lambda,\lambda}(g)=\lambda(g_0)$, where $\lambda$ is viewed as a multiplicative character on $T$ such that $\lambda(\alpha_i^\vee(t))=t^{\lambda(\alpha_i^\vee)}$.

For $u,v\in W$, the \emph{double Bruhat cell} $G^{u,v}$ is defined by 
\[
G^{u,v}=B^+uB^+\cap B^-vB^-.
\]
It is known that $G^{u,v}$ is smooth of dimension $l(u)+l(v)+r$. We recall the cluster structure on the coordinate algebra $\BC[G^{u,v}]$ by Berenstein--Fomin--Zelevinsky \cite{BFZ05}.

Let $m=\ell(u)+\ell(v)$. A \emph{double reduced word} $\bfi$ for $(u,v)$ is a reduced word $(i_1,\dots,i_m)$ for $(u,v)$ in $W\x W$, in the alphabet $[-r,-1]\sqcup[1,r]$, where the simple reflections of the first copy of $W$ are denoted by $[-r,-1]$ and those for the second copy by $[1,r]$.

For any double reduced word $\bfi$ for $(u,v)$, we define a seed $\bfs(\bfi)$ as follows. The index set is $J=[-r,-1]\sqcup[1,m]$. Let $i_k=-k$ for $k\in [-r,1]$. For $k\in J$, let
\[
k^+=\min\{l\in J\mid l>k, |i_l|=|i_k|\},
\]
and let $k^+=m+1$ if there is no such $m$. An index $k\in J$ is frozen if either $k<0$ or $k^+>m$. The exchange matrix is defined as follows. For $k,l\in J$, let $p=\max(k,l)$, $q=\min(k^+,l^+)$. Then
\[
    \begin{split}
        \ve_{kl}= \begin{cases}
            -\sgn((k-l)i_p) &\text{if $p=q$;}\\
            -\sgn((k-l)i_p)a_{|i_k|,|i_l|} &\text{if $p<q$ and $i_pi_q(k-l)(k^+-l^+)>0$;}\\
            0 &\text{otherwise.}
        \end{cases}
    \end{split}
\]
The symmetrizers are given by $d_k=\frac{1}{2}( \alpha_{|i_k|},\alpha_{|i_k|})$.

For the fixed reduced word $\bfi$ for $(u,v)$, and $k\in [1,l]$, we denote
\[
u_{\leq k}=\prod_{\substack{t=1,\dots,k\\i_t<0}}s_{|i_t|},\qquad v_{>k}=\prod_{\substack{t=l,\dots,k+1\\i_t>0}}s_{|i_t|}.
\]
For $k\in[-r,-1]$, we let $u_{\leq k}=e$ and $v_{>k}=v^{-1}$. Now for $k\in J$, let
\begin{equation}\label{eq:ClusterVariableDefn}
\Delta(k;\bfi)=\Delta_{u_{\leq k}\omega_{|i_k|},v_{>k}\omega_{|i_k|}}.
\end{equation}

The generalized minors are viewed as functions on $G^{u,v}$ by restriction.

\begin{theorem}[\cite{BFZ05}*{Theorem~2.10}]
\label{thm:DBCcluster}
For any $(u,v)\in W\x W$ and double reduced word $\bfi$ for $(u,v)$, there is an isomorphism
\[
a_\bfi:\mathcal{U}(\bfs(\bfi))\longrightarrow \BC[G^{u,v}]
\]
given by $A_{k,\bfs(\bfi)}\mapsto \Delta(k;\bfi)$.
\end{theorem}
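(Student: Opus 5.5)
This is the Berenstein--Fomin--Zelevinsky theorem, and I would follow their strategy. Throughout I work with the seed $\bfs(\bfi)$ and its neighbours, and I use the smoothness of $G^{u,v}$ and its dimension $\ell(u)+\ell(v)+r$ as recalled above.

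\emph{Step 1 (full rank).} First I show that $\bfs(\bfi)$ has full rank. I would prove this directly from the explicit formula for $\ve_{kl}$: order the mutable indices $k$ so that each row $(\ve_{kl})_l$ has a nonzero entry in the column $k^+$, and entries of the form ``leading term'' beyond it. This triangularity gives full rank.

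By Theorem~\ref{thm:FullRankUpperBound}, full rank gives
\[
\CU(\bfs(\bfi))=\LP(\bfs(\bfi))\cap\bigcap_{k\in J_\uf}\LP(\mu_k(\bfs(\bfi))).
\]
So it suffices to compare $\BC[G^{u,v}]$ with this intersection.

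\emph{Step 2 (the neighbouring clusters are regular).} Next I verify that the map $a_\bfi$ sends each adjacent cluster variable to a regular function on $G^{u,v}$. Concretely, for each mutable $k$ the exchange relation
\[
\Delta(k;\bfi)\,\Delta'_k=\prod_i \Delta(i;\bfi)^{[\ve_{ki}]_+}+\prod_i\Delta(i;\bfi)^{[-\ve_{ki}]_+}
\]
should hold with $\Delta'_k$ an explicit generalized minor. I would obtain this from the generalized Plücker-type identity
\[
\Delta_{u\omega_i,v\omega_i}\Delta_{us_i\omega_i,vs_i\omega_i}=\Delta_{us_i\omega_i,v\omega_i}\Delta_{u\omega_i,vs_i\omega_i}+\prod_{j\neq i}\Delta_{u\omega_j,v\omega_j}^{-a_{ji}},
\]
applied to $\ell(us_i)>\ell(u)$ and $\ell(vs_i)>\ell(v)$. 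I would reduce the general case to this identity via moves on $\bfi$ (2-moves, braid moves, and moving an index from one end to the other), checking that these moves transform seeds by mutations.

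\emph{Step 3 (the clusters are coordinate charts).} Then I show that each cluster (the initial one and each neighbour) consists of algebraically independent functions. More precisely, the torus chart $U_\bfs=\{\Delta(k)\neq0\ \forall k\}\subset G^{u,v}$ should be isomorphic to a torus, with coordinate ring $\LP(\bfs)$. For the initial seed I would use the factorization parametrization
\[
x_{\bfi}\colon T\x(\BC^*)^m\to G^{u,v}
\]
and show that the minors $\Delta(k;\bfi)$ are monomial, unitriangularly related to the factorization parameters. This yields $\BC[G^{u,v}]\subset\LP(\bfs(\bfi))$, because $G^{u,v}$ is irreducible and the chart is open dense; the same argument applies to each neighbour after a move. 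Together with Step 2, this gives $a_\bfi(\CU)\supset\BC[G^{u,v}]$ and that $a_\bfi$ is injective.

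\emph{Step 4 (every element of $\CU$ is regular).} This is the main obstacle: showing that an element of the intersection of the Laurent rings is regular on all of $G^{u,v}$. My plan is a codimension argument. The union of $U_{\bfs(\bfi)}$ and the $U_{\mu_k\bfs(\bfi)}$ should cover $G^{u,v}$ up to a closed subset of codimension $\ge2$. Since $G^{u,v}$ is smooth, hence normal, Hartogs extension then gives regularity. To prove the codimension claim, I would argue as follows.
\begin{itemize}
\item The frozen minors are invertible on $G^{u,v}$.
\item The divisors $\{\Delta(k;\bfi)=0\}$, for $k$ mutable, are irreducible and pairwise distinct. I would show this using irreducibility of generalized minors and the factorization (UFD-type) structure of $\BC[G^{u,v}]$.
\item On a generic point of $\{\Delta(k)=0\}$ the other cluster variables and $\Delta'_k$ are nonzero; otherwise the exchange relation would force a common factor.
\end{itemize}
The irreducibility input is exactly where I expect the difficulty, and I would try first to prove it by identifying each $\Delta(k;\bfi)$ with a minor on a (reduced) double Bruhat cell, where coprimality can be checked on the torus chart.
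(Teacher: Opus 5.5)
Your overall architecture (full rank, the upper bound of Theorem~\ref{thm:FullRankUpperBound}, torus charts from the factorization map, and a codimension-two Hartogs argument) is the Berenstein--Fomin--Zelevinsky one that the paper relies on (cf.\ Lemma~\ref{lem:Codim2Cluster}). Step~1 is fine: row $k$ has $\ve_{k,k^+}=\pm1$ and $\ve_{kl}=0$ for $l>k^+$. However, Step~2 rests on a false premise, and Step~3 inherits it. For a fixed seed $\bfs(\bfi)$, the adjacent variables $\mu_k(A_k)$ are in general not generalized minors, and moves on $\bfi$ cannot produce them.

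Take the paper's running example ($SL_3$, $\bfi=(1,2,1,-1,-2,-1)$). The exchange relation at vertex $2$ is $A_2A_2'=A_3A_{-2}A_5+A_1A_4$. Write $\Delta_{I,J}$ for the minor with rows $I$ and columns $J$. The Desnanot--Jacobi identity $\Delta_{12,23}\Delta_{23,12}=\Delta_{12,12}\Delta_{23,23}-g_{22}$ on $SL_3$ then gives
\[
A_2'=g_{11}(g_{22}g_{33}-g_{23}g_{32})-1 .
\]
This function is invariant under left and right multiplication by $T$ (and vanishes on $T$). Every $\Delta_{u\omega_i,v\omega_i}$, by contrast, has nonzero weight $(u\omega_i,v\omega_i)$, so $\mu_2(\bfs(\bfi))$ is not of the form $\bfs(\bfi')$ at all. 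The seed-changing moves, $(i,-i)\leftrightarrow(-i,i)$ and braid moves, mutate a vertex $k$ only after $k^+$ has been brought adjacent to it (or into a pattern $(i,j,i)$) by seed-preserving commutations. Here that would require swapping the $1$ and $-1$ lying between positions $2$ and $5$, which is itself a mutation at another vertex and changes the seed.

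So for such $k$ you have neither regularity of $\mu_k(A_k)$ on $G^{u,v}$ nor the fact that $\{A_j\neq0\ (j\neq k),\ \mu_k(A_k)\neq0\}$ is a torus. Without these, both halves fail:
\begin{itemize}
\item you do not get $\BC[G^{u,v}]\subset\LP(\mu_k(\bfs(\bfi)))$, which is needed for $\BC[G^{u,v}]\subset\CU$;
\item the Hartogs step fails, since it needs elements of $\LP(\mu_k(\bfs(\bfi)))$ to be regular on the $k$-th neighbouring chart.
\end{itemize}

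The codimension-two input of Step~4 is also only asserted, and the route you sketch for it does not work.
\begin{itemize}
\item Irreducibility of a generalized minor in $\BC[G]$ says nothing about its restriction to the locally closed subvariety $G^{u,v}$.
\item Irreducibility does not make the zero locus irreducible unless you also prove that $\BC[G^{u,v}]$ is factorial. Compare the $SL_2$ example in Section~\ref{sec:prelim:symmetric}, where $x$ is irreducible but not prime; this is exactly the issue the paper has to circumvent for $\mathring{S}_w$.
\item Checking coprimality ``on the torus chart'' is vacuous: all cluster variables are units there, and their zero loci lie entirely outside it.
\item ``The exchange relation would force a common factor'' does not show that $\mu_k(A_k)$ is generically nonzero on a component $D$ of $\{A_k=0\}$. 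With $\nu_D$ the order of vanishing along $D$, vanishing there only means that the exchange binomial vanishes along $D$ to order larger than $\nu_D(A_k)$, and nothing in your argument excludes that.
\end{itemize}
What is needed is a genuine primality/divisibility statement for the mutable $\Delta(k;\bfi)$ in $\BC[G^{u,v}]$: for example, $\Delta(k;\bfi)$ prime and the exchange binomial vanishing on its zero locus, together with non-divisibility statements. That would supply both the regularity of $\mu_k(A_k)$ that Step~2 was meant to provide and the codimension estimates of Step~4.
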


By \cite{SW21}*{Proposition~3.25}, different choices of $\bfi$ give mutation equivalent seeds $\bfs(\bfi)$, so the cluster structure on $G^{u,v}$ is independent of the choice of $\bfi$. We also have $\CA=\CU$ by \cite{SW21}*{Theorem~4.13}. By \cite{QY25}*{Appendix B}, when $u=v=w_0$, the partial compactification $\overline{\CU}(\bfs(\bfi))$ is isomorphic to $\BC[G]$.

For convenience later on, we also introduce an alternative index set $J'$ given by
\[
J'=\{(i,l)\,|\, i\in I, 0\leq l\leq n_i\},
\]
where $n_i$ is the number of appearances of $i$ and $-i$ in $\bfi$. Then there is a bijection between $J'$ and $J$, where $(i,0)$ corresponds to the index $-i$ in $J$, while for $l>0$, $(i,l)$ corresponds to the index $k>0$ such that $i_k$ is the $k$-th appearance of $i$ and $-i$ in $\bfi$. We order $J'$ using the order on $J$.

\begin{example}
\label{eg:running}
Our running example will be the open double Bruhat cell $G^{w_0,w_0}$ in $G=SL_3$. For $\bfi=(1,2,1,-1,-2,-1)$, the quiver for $\bfs(\bfi)$ is given by
\begin{center}
\begin{tikzpicture}[every node/.style={inner sep=0, minimum size=0.5cm, thick, fill=white, draw}, x=2cm, y=1cm]
\node (0) at (0,0) {$-1$};
\node (1) at (1,0) [circle]{$1$};
\node (2) at (2,0) [circle]{$3$};
\node (3) at (3,0) [circle]{$4$};
\node (4) at (4,0) {$6$};
\node (5) at (1,-1.5) {$-2$};
\node (6) at (2,-1.5) [circle]{$2$};
\node (7) at (3,-1.5) {$5$};
\drawpath{0,1,2}{black}
\drawpath{4,3,2,6,1,5,6,3,7,6}{black}
\end{tikzpicture}
\end{center}
Let $g_{ij}$ be the matrix entries of $g\in SL_3$. Then the frozen variables in $\bfs(\bfi)$ are $A_{-2}=g_{12}g_{23}-g_{22}g_{13}$, $A_{-1}=g_{13}$, $A_5=g_{21}g_{32}-g_{22}g_{31}$, $A_6=g_{31}$, while the mutable cluster variables are $A_1=g_{12}$, $A_2=g_{11}g_{22}-g_{12}g_{21}$, $A_3=g_{11}$, $A_4=g_{21}$. 
\end{example}

\subsection{Folding of cluster algebras}
\label{sec:prelim:folding}

We follow \cite{FWZ21a}.

Let $\bfs$ be a seed and $\Gamma$ be a finite group acting on the indices $J$. Then $\bfs$ is said to be \textit{$\Gamma$-admissible} if it satisfies the following conditions:
\begin{enumerate}
    \item for any $i\in J$ and $\sigma\in \Gamma$, $i$ is mutable if and only if $\sigma(i)$ is, and $d_i=d_{\sigma(i)}$;
    \item for any $i,j\in J$ and $\sigma\in \Gamma$, $\ve_{ij}=\ve_{\sigma(i),\sigma(j)}$;
    \item for any $i\in J_\uf$ and $\sigma\in \Gamma$, $\ve_{i,\sigma(i)}=0$;
    \item for any $i\in J$, $j\in J_\uf$ and $\sigma\in \Gamma$, $\ve_{ij}\ve_{\sigma(i),j}\geq0$.
\end{enumerate}
In this case, we define the folded seed $\bfs^\Gamma$ as follows:
\begin{itemize}
    \item The index set $\overline{J}$ is the set of $\Gamma$-orbits in $J$. Using condition (1), we let $\overline{J}_\uf$ to be the $\Gamma$-orbits consisting of mutable vertices. We will use $\overline{i}$ to denote the orbit of $i\in J$.
    \item The exchange matrix $\ve_{\overline{i}\overline{j}}^\Gamma=\sum_{i'\in \overline{i}} \ve_{ij}$. By condition (2), $\ve_{\overline{i}\overline{j}}^\Gamma$ is independent of the choice of $j\in \overline{j}$.
    \item The symmetrizers $d_{\overline{i}}^\Gamma=d_i/|\overline{i}|$. We have
    \[
    d_{\overline{i}}^\Gamma\ve_{\overline{i}\overline{j}}^\Gamma =\frac{1}{|\Gamma|}\sum_{\sigma\in \Gamma}d_i\ve_{\sigma(i),j}
    =-\frac{1}{|\Gamma|}\sum_{\sigma\in \Gamma}d_j\ve_{j,\sigma(i)}
    =-d_{\overline{j}}^\Gamma\ve_{\overline{j}\overline{i}}^\Gamma.
    \]
\end{itemize}

By condition (3), for any $k,k'$ in the same mutable $\Gamma$-orbit, the mutations $\mu_k$, $\mu_{k'}$ commute. We define the orbit mutation $\mu_{\overline{k}}$ on $\bfs$ to be the composition of mutations in all directions in $\overline{k}$. We also have the mutation $\mu_{\overline{k}}$ on $\bfs^\Gamma$.

Mutation is compatible with folding under extra admissible assumptions.
\begin{lem}[\cite{FWZ21a}*{Lemma~4.4.5}]
\label{lem:FoldQuiverMutation}
Let $\bfs$ be $\Gamma$-admissible and $\overline{k}$ be a mutable $\Gamma$-orbit such that $\mu_{\overline{k}}(\bfs)$ is $\Gamma$-admissible. Then $\mu_{\overline{k}}(\bfs)^\Gamma=\mu_{\overline{k}}(\bfs^\Gamma)$.
\end{lem}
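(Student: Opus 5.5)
The claim is that if $\bfs$ is $\Gamma$-admissible and $\overline{k}$ is a mutable orbit with $\mu_{\overline{k}}(\bfs)$ again $\Gamma$-admissible, then the folded exchange matrix of $\mu_{\overline{k}}(\bfs)$ equals the single mutation at $\overline{k}$ of $\bfs^\Gamma$. I would prove this by comparing the two exchange matrices entry by entry. The index sets, the sets of mutable vertices and the symmetrizers agree on both sides, since mutation changes none of them and the folded data depends only on the orbits and the $d_i$. Only the exchange matrices need comparing.

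First I compute the orbit mutation $\mu_{\overline{k}}$ on $\bfs$. Write $\overline{k}=\{k_1,\dots,k_s\}$. By condition (3), $\ve_{k_a k_b}=0$ for $a\neq b$. Hence mutating at $k_a$ does not change any entry $\ve_{k_b j}$ or $\ve_{j k_b}$: the correction terms involve $\ve_{k_b k_a}$ or $\ve_{k_a k_b}$, which vanish. So the successive mutations do not interact, and the composite satisfies
\[
\ve'_{ij}=\begin{cases}-\ve_{ij}, & i\in\overline{k}\text{ or } j\in\overline{k};\\ \ve_{ij}+\sum_{a=1}^{s}\bigl([\ve_{ik_a}]_+\ve_{k_aj}+\ve_{ik_a}[-\ve_{k_aj}]_+\bigr), & \text{otherwise}.\end{cases}
\]

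Next I fold this matrix and compare with mutating $\ve^\Gamma$ at $\overline{k}$. If $\overline{i}=\overline{k}$ or $\overline{j}=\overline{k}$, both sides are the negation of the corresponding entry, since the orbit sum is linear; this case is immediate. Otherwise, summing over $i'\in\overline{i}$ for a fixed $j$ gives
\[
\ve^\Gamma_{\overline{i}\,\overline{j}}+\sum_{i'\in\overline{i}}\sum_{a}\bigl([\ve_{i'k_a}]_+\ve_{k_aj}+\ve_{i'k_a}[-\ve_{k_aj}]_+\bigr).
\]
The target is
\[
\ve^\Gamma_{\overline{i}\,\overline{j}}+[\ve^\Gamma_{\overline{i}\,\overline{k}}]_+\ve^\Gamma_{\overline{k}\,\overline{j}}+\ve^\Gamma_{\overline{i}\,\overline{k}}[-\ve^\Gamma_{\overline{k}\,\overline{j}}]_+,
\]
where $\ve^\Gamma_{\overline{i}\,\overline{k}}=\sum_{i'}\ve_{i'k_a}$ for any fixed $a$, and $\ve^\Gamma_{\overline{k}\,\overline{j}}=\sum_a\ve_{k_aj}$.

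The key step is the sign analysis, which uses condition (4).
\begin{itemize}
\item Since $k_a$ is mutable, condition (4) gives $\ve_{i'k_a}\ve_{\sigma(i')k_a}\ge 0$. So for each $a$ the entries $\ve_{i'k_a}$, $i'\in\overline{i}$, share a sign. By $\Gamma$-invariance (condition (2)) this common sign does not depend on $a$; call it $\epsilon$.
\item Likewise, the entries $\ve_{k_aj}$ for $a=1,\dots,s$ share a sign, say $\eta$. For this I apply condition (4) to $\ve_{jk_a}$ and $\ve_{j,\sigma(k_a)}$ rather than directly, since the $j$ in (4) is required to be mutable. Skew-symmetrizability with the equal symmetrizers from condition (1) converts this to a statement about $\ve_{k_aj}$.
\end{itemize}
Because the signs are uniform, $[\,\cdot\,]_+$ commutes with the sums: $\sum_{i'}[\ve_{i'k_a}]_+=[\ve^\Gamma_{\overline{i}\,\overline{k}}]_+$ for each $a$, and $\sum_a[-\ve_{k_aj}]_+=[-\ve^\Gamma_{\overline{k}\,\overline{j}}]_+$. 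The double sum then factorizes into exactly the target expression.

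I expect the main obstacle to be justifying the sign uniformity of $\ve_{k_aj}$ when $j$ is frozen, because the stated condition (4) only controls columns indexed by mutable vertices. If skew-symmetrizability alone does not settle it, I would use the hypothesis that $\mu_{\overline{k}}(\bfs)$ is also admissible. In the frozen-row case, the entry $\ve'_{\overline{i}\,\overline{j}}$ must still be computed from a $\Gamma$-invariant matrix. I would check that any sign cancellation among the $\ve_{k_aj}$ still gives matching sums, since $\Gamma$-invariance makes the vector $(\ve_{k_aj})_a$ depend only on the relative position of $j$ and the orbit. Finally, well-definedness of the folded entries in $\mu_{\overline{k}}(\bfs)^\Gamma$, i.e. independence of the representative $j$, is exactly the admissibility of $\mu_{\overline{k}}(\bfs)$.
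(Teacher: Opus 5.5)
Your proposal is correct. It is the direct entrywise computation behind \cite{FWZ21a}*{Lemma~4.4.5}, which the paper only cites and does not reprove. Your second bullet already settles the frozen-$j$ case that worries you at the end. By condition (2), $\ve_{j,\sigma(k_a)}=\ve_{\sigma^{-1}(j),k_a}$, so condition (4) with the mutable column $k_a$, together with positivity of the symmetrizers, gives a uniform sign for $(\ve_{k_aj})_a$ for every $j$. The fallback in your last paragraph is therefore unnecessary, and its cancellation idea would not work in general.
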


We now turn to the cluster variables. We fix an ambient field $F_\Gamma$ the field of rational functions over $\BC$ in $|\overline{J}|$ variables, generated by the formal variables $A_{\overline{i}}$ for $\overline{i}\in \overline{J}$. We have a quotient map $\pi:\LP(\bfs)\to \LP(\bfs^\Gamma)$ induced by $A_i\mapsto A_{\overline{i}}$.

\begin{lem}[\cite{FWZ21a}*{Lemma~4.4.8}]
\label{lem:FoldVariableMutation}
Let $\bfs$ be $\Gamma$-admissible and $\overline{k}$ be a mutable $\Gamma$-orbit. Then $\pi(A_{k,\mu_{\overline{k}}(\bfs)})=A_{\overline{k},\mu_{\overline{k}}(\bfs^\Gamma)}$.
\end{lem}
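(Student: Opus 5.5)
The plan is to verify the identity directly from the exchange relations. By condition (3) of $\Gamma$-admissibility, the mutations in the directions of $\overline{k}$ commute. Moreover, $\ve_{k,k'}=0$ for $k,k'\in\overline{k}$, so mutating at one $k'\in\overline{k}$ with $k'\neq k$ leaves row $k$ of the exchange matrix unchanged. Indeed, the mutation formula gives
\[
\ve'_{kj}=\ve_{kj}+[\ve_{kk'}]_+\ve_{k'j}+\ve_{kk'}[-\ve_{k'j}]_+=\ve_{kj}.
\]
Mutating at $k'$ also leaves the variable $A_k$ unchanged. Hence, in the composite $\mu_{\overline{k}}(\bfs)$, the new variable at $k$ is obtained by a single exchange relation from the original seed $\bfs$:
\[
A_{k,\mu_{\overline{k}}(\bfs)}=\frac{1}{A_k}\Bigl(\prod_{i\in J}A_i^{[\ve_{ki}]_+}+\prod_{i\in J}A_i^{[-\ve_{ki}]_+}\Bigr).
\]
One small point needs care here: the other mutations in the orbit may modify variables $A_{k'}$ that appear in this product. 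This does not happen, because $\ve_{kk'}=0$, so those $A_{k'}$ carry exponent $0$.

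Next we apply $\pi$, which sends $A_i\mapsto A_{\overline{i}}$. Group the indices $i$ by their orbit $\overline{i}$. Condition (4), applied through condition (2), ensures that for fixed $k$ the entries $\ve_{ki'}$ with $i'\in\overline{i}$ all have the same sign. To see this, write $\ve_{ki'}=\ve_{\sigma(k),i}$ up to relabeling, and use skew-symmetrizability to convert to $\ve_{i,\sigma(k)}$. Condition (4) then gives $\ve_{i,k}\ve_{i,\sigma(k)}\ge 0$, equivalently written via $\ve_{\sigma^{-1}(i),k}$. It follows that
\[
\sum_{i'\in\overline{i}}[\ve_{ki'}]_+=\Bigl[\sum_{i'\in\overline{i}}\ve_{ki'}\Bigr]_+=[\ve^\Gamma_{\overline{k}\,\overline{i}}]_+,
\]
and similarly for the negative parts. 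Here I use that $\ve^\Gamma_{\overline{k}\overline{i}}=\sum_{k'\in\overline{k}}\ve_{k'i}$ equals $\sum_{i'\in\overline{i}}\ve_{ki'}$ up to the orbit-size normalization. Concretely, $|\overline{k}|\,|\mathrm{Stab}|$ counting together with condition (2) shows $\sum_{k'\in\overline{k}}\ve_{k'i}$ summed over $i\in\overline{i}$ equals $\sum_{i'\in\overline{i}}\ve_{ki'}$ summed over $k\in\overline{k}$. Checking this normalization against the convention $\ve^\Gamma_{\overline{i}\overline{j}}=\sum_{i'\in\overline{i}}\ve_{i'j}$ is the main technical point. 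I would handle it by transposing with the symmetrizers, $d_k\ve_{ki'}=-d_{i'}\ve_{i'k}$, together with $d^\Gamma_{\overline{i}}=d_i/|\overline{i}|$, since the folded matrix's row for $\overline{k}$ is determined by skew-symmetrizability from the columns.

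With this in hand, $\pi$ of the right-hand side equals
\[
\frac{1}{A_{\overline{k}}}\Bigl(\prod_{\overline{i}}A_{\overline{i}}^{[\ve^\Gamma_{\overline{k}\overline{i}}]_+}+\prod_{\overline{i}}A_{\overline{i}}^{[-\ve^\Gamma_{\overline{k}\overline{i}}]_+}\Bigr),
\]
which is precisely the exchange relation defining $A_{\overline{k},\mu_{\overline{k}}(\bfs^\Gamma)}$. This proves the statement. The main obstacle is the sign-coherence and sum-rearrangement step, which reconciles the row of $\bfs$ at $k$ with the row of $\bfs^\Gamma$ at $\overline{k}$ under the paper's column-sum convention. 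I would first try to establish it by the symmetrizer transposition argument above.
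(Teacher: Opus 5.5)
Your first two steps are the standard argument behind the cited \cite{FWZ21a}*{Lemma~4.4.8}; the paper itself gives no proof and only cites it. By condition (3), row $k$ and $A_k$ are untouched by the other mutations in $\overline{k}$, so $A_{k,\mu_{\overline{k}}(\bfs)}$ is given by the exchange relation of $\bfs$ at $k$. Condition (4) together with skew-symmetrizability makes $(\ve_{ki'})_{i'\in\overline{i}}$ sign-coherent, so after applying $\pi$ the exponent of $A_{\overline{i}}$ is $\bigl[\sum_{i'\in\overline{i}}\ve_{ki'}\bigr]_+$.

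The gap is the final identification of this exponent with $[\ve^\Gamma_{\overline{k}\,\overline{i}}]_+$. With the paper's definition $\ve^\Gamma_{\overline{k}\,\overline{i}}=\sum_{k'\in\overline{k}}\ve_{k'i}$, condition (2) and orbit counting give the following, where $\Gamma_i$ is the stabilizer of $i$:
\[
\sum_{i'\in\overline{i}}\ve_{ki'}=\frac{1}{|\Gamma_i|}\sum_{\sigma\in\Gamma}\ve_{\sigma^{-1}(k),i}=\frac{|\Gamma_k|}{|\Gamma_i|}\sum_{k'\in\overline{k}}\ve_{k'i}=\frac{|\overline{i}|}{|\overline{k}|}\,\ve^\Gamma_{\overline{k}\,\overline{i}}.
\]
This is exactly what your block-sum observation says, and the two sides agree only when $|\overline{i}|=|\overline{k}|$. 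Transposing with the symmetrizers cannot repair this: from $d^\Gamma_{\overline{i}}=d_i/|\overline{i}|$ and $d_k\ve_{ki'}=-d_i\ve_{i'k}$ you recover the same factor $|\overline{k}|/|\overline{i}|$.

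The paper's running example shows the discrepancy is real. There $\overline{3}=\{3\}$ and $\overline{1}=\{1,4\}$, and the relation $A_3A_3'=A_1A_4+A_2$ specializes to $A_{\overline{1}}^2+A_{\overline{2}}$. But $\ve^\Gamma_{\overline{3}\,\overline{1}}=\ve_{31}=\pm1$ would give exponent $1$.

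So what is missing is a consistent convention, not a cleverer computation. In \cite{FWZ21a}, exchange relations are read off columns and the folded matrix is $b_{IJ}=\sum_{i\in I}b_{ij}$. In other words, the orbit sum runs over the same index as the exponents of the exchange relation. The paper uses the row-based rule $A_kA_k'=\prod_iA_i^{[\ve_{ki}]_+}+\prod_iA_i^{[-\ve_{ki}]_+}$, so the folded matrix must instead be $\ve^\Gamma_{\overline{k}\,\overline{i}}=\sum_{i'\in\overline{i}}\ve_{ki'}$. This is compatible with symmetrizers proportional to $d_i|\overline{i}|$, not $d_i/|\overline{i}|$.

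You should state and use that definition explicitly. With it, your sign-coherence step already finishes the proof and no normalization argument is needed. Under the paper's formula as literally written, the identity is false whenever a mutable orbit is adjacent to an orbit of a different size, so no argument can close that step.
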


\begin{remark}
Note that the matching of the mutated cluster variables still follows from the proof without the extra assumption that $\mu_{\overline{k}}(\bfs)$ is $\Gamma$-admissible. In particular, we do not use the globally foldable assumption which is key in \citelist{\cite{Dup08}\cite{FWZ21a}}.
\end{remark}

\begin{cor}
\label{cor:FoldAcyclic}
Let $\bfs$ be $\Gamma$-admissible and acyclic. Then $\pi$ induces a surjection $\CA(\bfs)\to \CA(\bfs^\Gamma)$.
\end{cor}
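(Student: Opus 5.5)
The plan is to use the explicit description of $\CA(\bfs)$ for acyclic seeds and check that $\pi$ sends each generator into $\CA(\bfs^\Gamma)$. Three inputs are needed.

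\textbf{Step 1: the folded seed is acyclic.} An oriented cycle of mutable orbits $\overline{k_1}\to\overline{k_2}\to\cdots\to\overline{k_1}$ in $\bfs^\Gamma$ means $\ve^\Gamma_{\overline{k_t}\,\overline{k_{t+1}}}>0$ for each $t$. So for each $t$ and each representative $j\in\overline{k_{t+1}}$, some $i\in\overline{k_t}$ has $\ve_{ij}>0$. Starting from a representative of $\overline{k_1}$ and walking backwards, we pick predecessors in $\bfs$. Since the orbits are finite, the walk eventually repeats a vertex, which gives an oriented cycle of mutable vertices in $\bfs$. This contradicts the acyclicity of $\bfs$.

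\textbf{Step 2: the folded seed has full rank.} Theorem~\ref{thm:AcyclicSeed} needs full rank, so this must be checked separately for $\bfs^\Gamma$. The folded submatrix is a $\Gamma$-averaging, namely restriction to $\Gamma$-invariant vectors, of the original full-rank submatrix. I would check full rank directly from this, or else avoid the hypothesis by invoking the version of Berenstein--Fomin--Zelevinsky's description of acyclic cluster algebras that does not need full rank (their Corollary~1.21). I expect this step to be the main technical point.

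\textbf{Step 3: describe $\CA(\bfs)$.} By Theorem~\ref{thm:AcyclicSeed}, $\CA(\bfs)$ is generated by three kinds of elements:
\begin{itemize}
\item the cluster variables $A_i$ for $i\in J$;
\item the inverses of the frozen variables;
\item the once-mutated variables $A_{k,\mu_k(\bfs)}$ for $k\in J_\uf$.
\end{itemize}
The same description applies to $\CA(\bfs^\Gamma)$, using Steps~1 and~2.

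\textbf{Step 4: $\pi$ maps generators to generators.} First, $\pi(A_i)=A_{\overline{i}}$ for every $i$. Second, frozen variables go to frozen variables by admissibility condition~(1), so their inverses go to inverses of frozen variables. Third, for $k\in J_\uf$, mutating in the direction $k$ alone gives the same new variable at $k$ as the orbit mutation $\mu_{\overline{k}}$. This holds because the mutations in different directions of an orbit commute and leave each other's variables untouched, using condition~(3). Lemma~\ref{lem:FoldVariableMutation} then gives $\pi(A_{k,\mu_k(\bfs)})=A_{\overline{k},\mu_{\overline{k}}(\bfs^\Gamma)}$. Since $\pi$ is a ring homomorphism on $\LP(\bfs)$, it therefore maps $\CA(\bfs)$ into $\CA(\bfs^\Gamma)$.

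\textbf{Step 5: surjectivity.} Every generator of $\CA(\bfs^\Gamma)$ in the description of Step~3 is the image of some generator of $\CA(\bfs)$: take $A_i$ for $A_{\overline{i}}$, and $A_{k,\mu_k(\bfs)}$ for $A_{\overline{k},\mu_{\overline{k}}(\bfs^\Gamma)}$. Hence the induced map $\CA(\bfs)\to\CA(\bfs^\Gamma)$ is surjective.
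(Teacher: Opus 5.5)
Your proof is correct and follows the paper's argument. You lift an oriented cycle of $\bfs^\Gamma$ to one in $\bfs$ (your pigeonhole walk makes the paper's ``lift it to a cycle'' precise), then describe both acyclic cluster algebras by the variables of the seed and its adjacent seeds, and match these under $\pi$ via folding of single mutations; here you cite Lemma~\ref{lem:FoldVariableMutation}, which is in fact the relevant lemma. Your concern in Step~2 is justified: the corollary assumes no full rank, so your first option (deducing full rank of $\bfs^\Gamma$ from that of $\bfs$) is unavailable in general, while Theorem~\ref{thm:AcyclicSeed} as stated needs full rank for both $\bfs$ and $\bfs^\Gamma$; the Berenstein--Fomin--Zelevinsky generation result for acyclic seeds that needs no full-rank hypothesis is the right tool, and it also covers your Step~3 for $\bfs$ itself.
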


\begin{proof}
If the quiver for $\bfs^\Gamma$ contains an oriented cycle of mutable vertices, we can lift it to a cycle in $\bfs$. This shows that if $\bfs$ is acyclic, so is $\bfs^\Gamma$. Then the surjection on cluster algebras follows immediately from Theorem~\ref{thm:AcyclicSeed} and Lemma~\ref{lem:FoldQuiverMutation}.
\end{proof}

\subsection{Symmetric spaces}
\label{sec:prelim:symmetric}

Let $\theta$ be an involution of $G$ and $K$ be the fixed-point subgroup.

Let $\eta:G/K \hookrightarrow G$ be the embedding given by $\eta(gK)=g\theta(g)^{-1}$. We denote the image by $S$, which is also identified with the twisted orbit of the identity, for the twisted action of $G$ on itself given by $g\cdot x= gx\theta(g)^{-1}$. $S$ is contained in the set $R = \{g\in G\,|\, g=\theta(g)^{-1}\}$. $R$ and $S$ are closed in $G$ by \cite{Spr85}.

In this paper we will assume $G=SL_n$ with $\theta$ given by the inverse transpose $\theta(g)=g^{-T}$. It is well known that any symmetric matrix in $SL_n$ can be written as $gg^T$ for some $g\in SL_n$, so $R=S$. We further fix a pinning such that $\theta(x_i(t))=y_i(-t)$ for any $i\in I$ and $t\in \BC$. We have $\theta(\dot{s}_i)=\dot{s}_i$ and 
$\theta(\alpha_i^\vee(t))=\alpha_i^\vee(t^{-1})$ for any $i\in I$.

Let $\mathring{S}_w = S\cap B^+wB^+$, which is affine and non-empty for any $w\in W$. By \cite{Lu14}, $\mathring{S}_w$ is irreducible and smooth with dimension
\[
\dim(G/K)+\ell(w)-\ell(w_0)=\ell(w)+\rank(G),
\]
and it is a $T$-leaf of a certain holomorphic Poisson structure on $G$.

Since $g=g^T$ for any $g\in S$, applying the transpose to $\mathring{S}_w$ gives
\[
\mathring{S}_w=S\cap B^-w^{-1}B^-.
\]
This shows that each $\mathring{S}_w$ is also the intersection of $S$ with the double Bruhat cell $G^{w,w^{-1}}$.

The variety $\mathring{S}_w$ is in general not factorial. For example, for $G=SL_2$, the open strata $\mathring{S}_{w_0}$ has coordinate ring isomorphic to $\BC[x,y,z^{\pm1}]/(xy=1+z^2)$. This is not a UFD, and the functions $x,y$ are irreducible but not prime.

We also have the following lemma on the generalized minors.
\begin{lem}[\cite{FZ99}*{Proposition~2.7}]
\label{lem:ThetaMinor}
For any $u,v\in W$ and $i\in I$,
\[
\Delta_{u\omega_i,v\omega_i}(g)=\Delta_{v\omega_i,u\omega_i}(g^T).
\]
\end{lem}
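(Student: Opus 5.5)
The plan is to reduce the statement to the case $u=v=e$ using the identity $\Delta_{v\lambda,w\lambda}(g)=\Delta_{\lambda,\lambda}(\dot{v}^{-1}g\dot{w})$ recorded in Section~\ref{sec:prelim:dbc}. We then prove that case with the Gaussian decomposition. Throughout, write $\tau(g)=g^T$. This is an anti-automorphism of $G=SL_n$, and for our pinning it satisfies $\tau(x_i(t))=y_i(t)$, $\tau(y_i(t))=x_i(t)$, and $\tau|_T=\mathrm{id}$.

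\emph{Step 1: transpose of the Weyl group lifts.} We have
\[
\tau(\dot{s}_i)=\tau(x_i(1))\,\tau(y_i(-1))\,\tau(x_i(1))=y_i(1)x_i(-1)y_i(1),
\]
read in reverse order. A direct $SL_2$ computation identifies this with $\dot{s}_i^{-1}$; indeed $\dot{s}_i$ is an orthogonal matrix. Since $\tau$ reverses products, for a reduced word $w=s_{j_1}\cdots s_{j_k}$ we get $\tau(\dot{w})=\dot{s}_{j_k}^{-1}\cdots\dot{s}_{j_1}^{-1}=\dot{w}^{-1}$. That is, $\dot{w}^T=\dot{w}^{-1}$ for all $w\in W$.

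\emph{Step 2: the reduction.} Using Step 1,
\[
\Delta_{v\omega_i,u\omega_i}(g^T)=\Delta_{\omega_i,\omega_i}(\dot{v}^{-1}g^T\dot{u})
=\Delta_{\omega_i,\omega_i}\bigl((\dot{u}^{T}g\,\dot{v}^{-T})^T\bigr)
=\Delta_{\omega_i,\omega_i}\bigl((\dot{u}^{-1}g\dot{v})^T\bigr).
\]
The left-hand side of the lemma is $\Delta_{\omega_i,\omega_i}(\dot{u}^{-1}g\dot{v})$. So it suffices to show that $\Delta_{\omega_i,\omega_i}(h^T)=\Delta_{\omega_i,\omega_i}(h)$ for all $h\in G$.

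\emph{Step 3: the principal case.} For $h\in G_0$, write $h=h_-h_0h_+$. Then
\[
h^T=h_+^T\,h_0\,h_-^T .
\]
Here $h_+^T\in U^-$ and $h_-^T\in U^+$, because $\tau$ swaps $x_i$ and $y_i$ and hence swaps $U^+$ and $U^-$. This is therefore the Gaussian decomposition of $h^T$, and its torus part is the same $h_0$. Hence $\Delta_{\omega_i,\omega_i}(h^T)=\omega_i(h_0)=\Delta_{\omega_i,\omega_i}(h)$ on $G_0$. Both sides are regular functions on $G$ and $G_0$ is Zariski dense, so the identity holds on all of $G$.

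For $SL_n$ the whole statement can also be checked concretely: $\Delta_{\omega_i,\omega_i}$ is the leading principal $i\times i$ minor, which is transpose-invariant. Correspondingly, $\Delta_{u\omega_i,v\omega_i}$ is the minor with row set $u[1,i]$ and column set $v[1,i]$, up to the signs coming from $\dot{u},\dot{v}$, and those signs are handled by Step 1.

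I expect the main (mild) obstacle to be Step 1: verifying that $\tau(\dot{s}_i)$ equals exactly $\dot{s}_i^{-1}$, with the sign conventions of the pinning, rather than $\dot{s}_i$ or $\dot{s}_i$ times a torus element. I would settle it by the explicit $2\times2$ computation
\[
\dot{s}_i=\begin{pmatrix}0&1\\-1&0\end{pmatrix}\quad\text{in } SL_2,
\]
whose transpose is its inverse.
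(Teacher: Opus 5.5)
Your proof is correct, and it is essentially the standard argument behind the result the paper cites without proof (\cite{FZ99}*{Proposition~2.7}): the lifts satisfy $\dot{w}^T=\dot{w}^{-1}$, which reduces the identity to transpose-invariance of $\Delta_{\omega_i,\omega_i}$, and that invariance follows from the Gaussian decomposition. Your explicit check that $\dot{s}_i^T=\dot{s}_i^{-1}$ for the paper's own lift $\dot{s}_i=x_i(1)y_i(-1)x_i(1)$ and pinning (with $x_i(t)^T=y_i(t)$) also confirms the statement directly in the paper's sign conventions.
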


\section{Cluster structures on symmetric spaces}
\label{sec:cluster}

\subsection{The cluster structure}
\label{sec:cluster:seed}

In this section we propose cluster structures on the strata $\mathring{S}_w$ defined in Section~\ref{sec:prelim:symmetric}, with proofs given in the rest of Section~\ref{sec:cluster}.

Let $u,v\in W$ be such that $v^{-1}u=w$ with $\ell(w)=\ell(u)+\ell(v)$ and $\bfi$ be a double reduced word for $(u,v)$. Then $\bfj=(-\bfi^{-1},\bfi)$ is a double reduced word for $(w,w^{-1})$, where $-\bfi^{-1}$ is the reduced word obtained by reversing $\bfi$ and flipping the signs.

By Theorem~\ref{thm:DBCcluster}, $\bfj$ defines a seed for $G^{w,w^{-1}}$. Let $m_i$ be the number of appearances of $i$ and $-i$ in $\bfi$. Then in $\bfj$, $i$ and $-i$ appear $2m_i$ times.

\begin{lem}
\label{lem:FoldDBCSeed}
For $i\in I$ and $0\leq l\leq m_i$, the seed $\bfs(\bfj)$ is admissible for the involution $\sigma:(i,m_i-l)\leftrightarrow (i,m_i+l)$, and we have
\[
\Delta((i,m_i-l);\bfj)(g)=\Delta((i,m_i+l);\bfj)(g^T).
\]
\end{lem}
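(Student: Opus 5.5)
The plan is to make the combinatorics of $\bfj=(-\bfi^{-1},\bfi)$ explicit and then check everything position by position. Write $\bfi=(i_1,\dots,i_m)$, so that $\bfj=(j_1,\dots,j_{2m})$ with $j_t=-i_{m+1-t}$ for $t\le m$ and $j_{m+t}=i_t$. The reflection $\rho:t\mapsto 2m+1-t$ of positions, combined with a sign flip, is a symmetry of the word: $j_{\rho(t)}=-j_t$. In particular each colour $i$ occurs $m_i$ times in each half, and the occurrences of colour $i$ are interchanged by $\rho$.

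\textbf{Intervals.} I will view an index $k\in J$ of colour $i$ as the half-open interval $[k,k^+)$, with the frozen index $-i$ read as the interval starting at $0$. Then $(i,l)$ is the $l$-th such interval. The first step is to check that under $\rho$ the interval of $(i,m_i+l)$ is carried to the interval of $(i,m_i-l)$, after shifting endpoints by one because of the half-open convention. Indeed, the gaps between consecutive occurrences of $i$ are reflected about the midpoint $m+\tfrac12$, and $(i,m_i)$ is the interval straddling the middle. Hence $\sigma$ is the involution of $J$ induced by $\rho$. In particular it swaps $(i,0)$ (frozen, negative) with $(i,2m_i)$ (frozen since $k^+>2m$), and it fixes $(i,m_i)$.

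\textbf{Minor identity.} Here the Weyl group elements of $\bfj$ are $(w,w^{-1})$, and $w=v^{-1}u$ is read off from the negative letters of $\bfj$. For $k=(i,m_i+l)$ at position $p$, the minor is $\Delta_{u_{\le p}\omega_i,\,v_{>p}\omega_i}$. For the mirrored index $k'=(i,m_i-l)$ at position $p'$, the prefix $u_{\le p'}$ consists of the negative letters up to $p'$. Under $\rho$ these are exactly the positive letters of $\bfj$ after $p$, read in reverse order. So $u_{\le p'}$ and $v_{>p}$ agree up to letters lying strictly inside the interval of $k$ or $k'$. Such letters are simple reflections $s_{i'}$ with $i'\neq i$, which fix $\omega_i$. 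The same argument gives $v_{>p'}\omega_i=u_{\le p}\omega_i$.

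Therefore $\Delta(k';\bfj)=\Delta_{v_{>p}\omega_i,\,u_{\le p}\omega_i}$, and Lemma~\ref{lem:ThetaMinor} yields $\Delta(k';\bfj)(g)=\Delta(k;\bfj)(g^T)$. As a sanity check on the endpoints, the frozen pair gives $\Delta_{\omega_i,w\omega_i}(g)$ and $\Delta_{w\omega_i,\omega_i}(g^T)$, which agree by Lemma~\ref{lem:ThetaMinor}.

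\textbf{Admissibility.} I will verify the four conditions in turn.
\begin{enumerate}
\item Condition (1) follows from the interval picture: $\sigma$ preserves colour, hence $d_k$, and it swaps the two frozen ends and fixes nothing else that is frozen.
\item For condition (2), apply $\rho$ to the BFZ formula for $\ve_{kl}$. The quantity $p=\max(k,l)$ is exchanged with the (shifted) $q=\min(k^+,l^+)$ of the mirrored pair. Both factors $(k-l)$ and $i_p$ change sign, so $\sgn((k-l)i_p)$ is unchanged. Likewise $i_pi_q(k-l)(k^+-l^+)$ is preserved, since each factor flips sign, or the first two swap roles. If the direct sign bookkeeping becomes unwieldy, I would instead use the intrinsic description of BFZ quivers in terms of overlapping intervals and their relative positions. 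That description is manifestly invariant under reversal together with sign flip.
\item For condition (3), a nonzero $\ve_{kl}$ between indices of the same colour requires $l=k^+$ or $k=l^+$, i.e. consecutive occurrences. The indices $(i,m_i-l)$ and $(i,m_i+l)$ differ by $2l\neq1$, and the fixed point gives $\ve_{kk}=0$.
\end{enumerate}

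\textbf{Main obstacle.} The hardest step is condition (4): for mutable $j$, I need $\ve_{kj}\ve_{\sigma(k),j}\ge0$. If both $k$ and $\sigma(k)$ are adjacent to $j$, their intervals are mirror images and both interact with the interval of $j$. When $j$ is $\sigma$-fixed, condition (2) already gives $\ve_{kj}=\ve_{\sigma(k),j}$. Otherwise I expect the interval of $j$ to be forced to sit on one side of the middle. I would then case-split on whether each of $k$, $\sigma(k)$ is a same-colour neighbour of $j$ or an overlapping interval of a different colour. In the overlapping case I would read off the sign from the relative order of the left endpoints and the letter signs on each side of the midpoint, and show the two signs agree or one of them vanishes. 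This sign case analysis is where I expect the real work to lie.
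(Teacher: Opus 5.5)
Your minor identity and your checks of conditions (1)--(3) are correct. They rest on the same mirror symmetry the paper uses, which the paper phrases as the isomorphism between the quivers of $\bfs(-\bfi^{-1})$ and $\bfs(\bfi)$, glued along the $\sigma$-fixed vertices $(i,m_i)$. One wording fix in (2). Under the mirror one has $p'=\rho(q)$, $q'=\rho(p)$, $k'-l'=-(k^+-l^+)$ and $i_{p'}=-i_q$, so the new sign is $\sgn((k^+-l^+)i_q)$. This equals $\sgn((k-l)i_p)$ not because ``both factors flip'', but for one of two reasons. Either $p=q$, which forces $l=k^+$ or $k=l^+$, so $k-l$ and $k^+-l^+$ have the same sign. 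Or the nonvanishing condition $i_pi_q(k-l)(k^+-l^+)>0$ holds.

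The gap is condition (4), which you leave as an anticipated sign case analysis. In fact no signs ever need comparing. The missing observation is that BFZ arrows are local: by the last clause of the formula, $\ve_{kl}\neq 0$ forces $\max(k,l)\le\min(k^+,l^+)$.

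Take a non-fixed index $k=(i,m_i-l)$ with $l\ge 1$. Both $k$ and $k^+$ lie in the first half (positions $\le m$, with $-i$ placed before position $1$), while $\sigma(k)$ and $\sigma(k)^+$ lie in the second half. Suppose a mutable $j$ had $\ve_{kj}\neq 0\neq \ve_{\sigma(k),j}$. Then $j\le k^+\le m$ and $j^+\ge \sigma(k)\ge m+1$. So $j$ is the last occurrence of its colour in the first half, $j=(i',m_{i'})$, which is $\sigma$-fixed. Condition (2) then gives $\ve_{\sigma(k),j}=\ve_{\sigma(k),\sigma(j)}=\ve_{kj}$, so the product is a square; it is trivially a square when $k$ itself is fixed.

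Hence for non-fixed $j$ one of the two factors always vanishes. Your expectation that the interval of $j$ sits on one side of the middle is automatic for such $j$, and together with locality it already finishes the argument. This is precisely the paper's reasoning: the two halves meet only in the fixed vertices $(i,m_i)$, so no oriented $4$-cycle can pass through two non-trivial orbits.
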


\begin{proof}
Recall definition of the seed $\bfs(\bfj)$ in Section~\ref{sec:prelim:dbc}. The quiver for $\bfs(\bfj)$ is obtained by gluing the quivers for $\bfs(-\bfi^{-1})$ and $\bfs(\bfi)$ along frozen variables, which become mutable in $\bfs(\bfj)$ with arrows added between them. Explicitly, these are the vertices $(i,m_i)$ for $i\in I$, with
\[
\ve_{(i,m_i),(j,m_j)}=
\begin{cases}
    -a_{ij} & \text{if $(i,m_i+1)<(j,m_j+1)$ and $(i,m_i+1)<(j,m_j+1)$};\\
    a_{ij} & \text{if $(i,m_i+1)>(j,m_j+1)$ and $(i,m_i+1)>(j,m_j+1)$};\\
    0 & \text{otherwise}.
\end{cases}
\]

The given involution is induced from the obvious isomorphism between the two quivers for $\bfs(-\bfi^{-1})$ and $\bfs(\bfi)$, and conditions (1), (2) and (3) for admissibility follow immediately.

For condition (4), since the diagram involution is trivial, the vertices involved in the gluing have a trivial orbit. Therefore, there is no oriented 4-cycle in the quiver for $\bfs(\bfj)$ involving two non-trivial orbits, and we have admissibility.

The second equality then follows from Lemma~\ref{lem:ThetaMinor}.
\end{proof}

Using the lemma, when restricting to $S$ where $g=\theta(g)^{-1}$, we have
\[
\Delta((i,m_i-l);\bfj)=\Delta((i,m_i+l);\bfj).
\]
Therefore, the quotient map coming from the folding of $\bfs(\bfj)$ is compatible with the quotient map $\BC[G]\to \BC[S]$. The same holds for the quotient map $\BC[G^{w,w^{-1}}]\to \BC[\mathring{S}_w]$. We will denote the folded seed by $\wt{\bfs}(\bfi)$.

\begin{lem}
\label{lem:IndepOfWord}
The seeds $\wt{\bfs}(\bfi)$ for all $u,v\in W$ such that $v^{-1}u=w$ with $\ell(w)=\ell(u)+\ell(v)$, and all double reduced words $\bfi$ for $(u,v)$ are mutation equivalent.
\end{lem}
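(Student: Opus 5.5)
Throughout, ``mutation equivalent'' means equivalent via mutations of the folded seeds. The idea is to lift the standard moves connecting double reduced words to symmetric moves on $\bfj=(-\bfi^{-1},\bfi)$, and then to descend them to the folded seeds. I would proceed in two stages:
\begin{enumerate}
\item fix the pair $(u,v)$ and show that changing the double reduced word $\bfi$ for $(u,v)$ does not change the mutation class of $\wt{\bfs}(\bfi)$;
\item show that the pairs $(u,v)$ with $v^{-1}u=w$ and $\ell(w)=\ell(u)+\ell(v)$ are connected by elementary moves that also preserve the mutation class.
\end{enumerate}

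\emph{Stage (1).} By Tits' theorem, any two double reduced words for $(u,v)$ are connected by three kinds of moves: commutations of adjacent letters $i,-j$ of opposite signs, braid moves within one sign, and the swap $(i,-i)\leftrightarrow(-i,i)$. Following \cite{SW21}*{Proposition~3.25} (equivalently \cite{BFZ05}), each such move changes $\bfs(\bfi)$ by
\begin{itemize}
\item nothing, for trivial commutations;
\item a single mutation, for the swap $(i,-i)\leftrightarrow(-i,i)$;
\item a short sequence of mutations at vertices inside the braid window, for braid moves (one mutation in the simply-laced case $SL_n$).
\end{itemize}
A move performed on $\bfi$ must be performed simultaneously on the mirror copy $-\bfi^{-1}$ inside $\bfj$, so that $\bfj$ changes by a $\sigma$-symmetric pair of moves. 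The relevant vertices lie strictly inside one half of $\bfj$, together with their mirror images. The corresponding mutations are then at a vertex $k$ and at $\sigma(k)$. These two vertices are not adjacent and their mutations commute, because vertices in the two halves of the quiver interact only through the glued vertices $(i,m_i)$. Hence the seed changes by an orbit mutation $\mu_{\overline{k}}$. I would check that the result is again $\sigma$-admissible, since it is of the form $\bfs(\bfj')$ for the new symmetric word $\bfj'$ and Lemma~\ref{lem:FoldDBCSeed} applies to it. Lemma~\ref{lem:FoldQuiverMutation} then gives
\[
\wt{\bfs}(\bfi')=\mu_{\overline{k}}(\wt{\bfs}(\bfi)).
\]

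\emph{Stage (2).} Write $w=v^{-1}u$ with lengths adding. Adjacent factorizations differ by moving one simple reflection across the middle: if $u=s_iu'$ with $\ell(u)=\ell(u')+1$, pass to $(u',vs_i)$. At the level of words, choose $\bfi$ whose first letter is $-i$; the new word $\bfi'$ has first letter $+i$. In $\bfj$ the middle segment then reads $\dots,i,-i,\dots$ and becomes $\dots,-i,i,\dots$. This is exactly the swap move at the center of $\bfj$, which changes $\bfs(\bfj)$ by a single mutation at the glued vertex $(i,m_i)$. That vertex is a fixed point of $\sigma$, so this is again an orbit mutation, and admissibility is preserved as before. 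Iterating, every factorization connects to the one with $v=e$, $u=w$, so all the folded seeds are mutation equivalent.

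The main obstacle is the middle step. The indices $m_i$ shift under the move, and one must check that the single mutation at the center really produces the seed of the new symmetric word $\bfj'$ with the correct involution, meaning the exchange-matrix entries between glued vertices match the formula in the proof of Lemma~\ref{lem:FoldDBCSeed}. I would verify this directly from the BFZ exchange matrix formula, using the gluing description in that proof. A secondary subtlety is confirming, from the same gluing description, that the mirror mutations at $k$ and $\sigma(k)$ in Stage (1) never involve adjacent vertices.
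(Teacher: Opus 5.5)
Your proposal is correct and follows essentially the same route as the paper: the paper also splits into the swap of the first letter of $\bfi$ (a single mutation at the $\sigma$-fixed glued vertex $(i,m_i)$) and the usual double-reduced-word moves performed symmetrically in both halves of $\bfj$, then uses Lemma~\ref{lem:FoldDBCSeed} for admissibility and Lemma~\ref{lem:FoldQuiverMutation} to descend. Your explicit check that $k$ and $\sigma(k)$ are non-adjacent fills in detail the paper leaves implicit. There is one small slip: changing the first letter $-i$ to $+i$ takes $(s_iu',v)$ to $(u',s_iv)$, not $(u',vs_i)$, since $(s_iv)^{-1}u'=v^{-1}u=w$.
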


\begin{proof}
Any two such $\bfi$ are related by successive applications of the following moves:
\begin{itemize}
    \item Replacing the first entry $i$ with $-i$.
    \item The usual moves on double reduced words: $(i,-j)\sim(-j,i)$ and $(i,j,i)\sim(j,i,j)$.
\end{itemize}

The first case corresponds to a single mutation at $(i,m_i)$ in $\bfs(\bfj)$ and the second case corresponds to mutations in each copy $\bfs(-\bfi^{-1})$ and $\bfs(\bfi)$ without involving $(i,m_i)$ (see \cite{FG06}*{Section 3} for the explicit mutation and computations).

It suffices to show that the mutations involved are compatible with folding, so the mutation equivalence descends from that of double Bruhat cells. Since each step involves only a single mutation, admissibility follows directly from Lemma~\ref{lem:FoldDBCSeed}, so Lemma~\ref{lem:FoldQuiverMutation} applies.
\end{proof}

The main goal of this section is to prove that we can fold the cluster structure on $G^{w,w^{-1}}$ to get a cluster structure on $\mathring{S}_w$.

\begin{thm}
\label{thm:MainThm} 
For any $u,v\in W$ such that $v^{-1}u=w$ with $\ell(w)=\ell(u)+\ell(v)$, and double reduced word $\bfi$ for $(u,v)$, there is an isomorphism $\CU(\wt{\bfs}(\bfi))\isoto \BC[\mathring{S}_w]$ such that the following diagram commutes
\begin{equation*}
    \begin{tikzcd}
    \CU(\bfs(\bfj)) \ar[d,"\pi"]\ar[r,"\sim"] & \BC[G^{w,w^{-1}}]\ar[d]\\
    \CU(\wt{\bfs}(\bfi)) \ar[r,"\sim"] & \BC[\mathring{S}_w]
    \end{tikzcd}
\end{equation*}
Furthermore, $\wt{\bfs}(\bfi)$ is locally acyclic. In particular, $\CA(\wt{\bfs}(\bfi))=\CU(\wt{\bfs}(\bfi))$.
\end{thm}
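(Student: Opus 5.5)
The proof has two halves: first build a ring map $\CU(\wt{\bfs}(\bfi))\to\BC[\mathring{S}_w]$ compatible with $\pi$, then show it is an isomorphism by induction on $w$. The map comes from Theorem~\ref{thm:DBCcluster} and Lemma~\ref{lem:FoldDBCSeed}. By Lemma~\ref{lem:FoldDBCSeed}, the composite $\CU(\bfs(\bfj))\isoto\BC[G^{w,w^{-1}}]\to\BC[\mathring{S}_w]$ sends $A_k$ and $A_{\sigma(k)}$ to the same function. So on the initial cluster it factors through $\pi$, and we obtain a map $\LP(\wt{\bfs}(\bfi))\supset\pi(\LP(\bfs(\bfj)))\to \BC(\mathring{S}_w)$ sending $A_{\overline{k}}$ to the restricted minor $\Delta(k;\bfj)|_{\mathring{S}_w}$. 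These restricted minors are algebraically independent because their number is $|\overline J|=\ell(w)+\rank G=\dim\mathring{S}_w$. To check this, note that the restricted minors generate the function field: pulled back to $G/K$ via $g\mapsto gg^T$ on a torus chart, they give a birational chart. Hence we get an embedding of fields $F_\Gamma\hookrightarrow\BC(\mathring{S}_w)$.

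For the inclusion $\BC[\mathring{S}_w]\subset\CU(\wt{\bfs}(\bfi))$: every regular function on $\mathring{S}_w$ is the restriction of a function on $G^{w,w^{-1}}$, since $\mathring{S}_w$ is closed in the affine variety $G^{w,w^{-1}}$. That function is Laurent in every seed of $\bfs(\bfj)$, and in particular in every seed reached by orbit mutations. Lemma~\ref{lem:FoldVariableMutation}, together with Lemma~\ref{lem:FoldQuiverMutation} wherever admissibility persists, shows that $\pi$ of such a Laurent expansion is a Laurent expansion in the folded seed. This gives $\BC[\mathring{S}_w]\subset\LP(\wt{\bfs})\cap\bigcap_{\overline{k}}\LP(\mu_{\overline{k}}\wt{\bfs})$. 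Full rank of the folded seed, inherited from the nondegeneracy of the BFZ seed after summing over orbits, would then let Theorem~\ref{thm:FullRankUpperBound} identify this intersection with $\CU$.

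The reverse inclusion $\CU\subset\BC[\mathring{S}_w]$ is the main obstacle. The BFZ argument needs codimension-two control via primeness of the frozen and cluster variables, and this fails because $\mathring{S}_w$ is not factorial (already for $SL_2$). I would instead induct on $w$, in the style of deletion--contraction. Choose the double reduced word so that the last letter $i$ of $\bfi$ gives a vertex $\overline{k}$ for which freezing, or deleting, that vertex corresponds to the open subset $\{\Delta(\overline{k})\neq0\}$. The localized stratum should then be isomorphic to $\mathring{S}_{w'}\times\BC^\times$ or to a product with a smaller stratum, for $w'=s_iws_i$ or $w'$ of smaller length. The complementary divisor is handled through the adjacent seed $\mu_{\overline{k}}$.

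The inductive hypothesis then gives $\CU(\text{frozen seed})=\BC[\mathring{S}_w][\Delta(\overline k)^{-1}]$. This yields a cover of $\mathring{S}_w$ up to codimension two, and $\CU=\BC[\mathring{S}_w]$ follows by normality of the smooth variety $\mathring{S}_w$. Repeated conjugation by simple reflections reduces $w$ to a minimal-length element of its conjugacy class. I expect to need to verify such base cases by hand: they are Coxeter-type elements of parabolic subgroups, whose folded seeds are acyclic, so Theorem~\ref{thm:AcyclicSeed} and Corollary~\ref{cor:FoldAcyclic} give explicit generators.

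The same induction yields local acyclicity: the charts $\{\Delta(\overline k)\ne0\}$ and $\{\Delta(\mu_{\overline k})\neq0\}$ cover $\Spec\CU$, each chart is locally acyclic by induction, and the base cases are acyclic. Muller's theorem then gives $\CA=\CU$. Commutativity of the diagram holds by construction, since both routes send $A_k$ to $\Delta(k;\bfj)|_{\mathring{S}_w}$, and independence of the word follows from Lemma~\ref{lem:IndepOfWord}.
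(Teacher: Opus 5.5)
Much of your outline is sound. The field embedding works, best justified as in the paper: transposition permutes the torus coordinates $A_k\leftrightarrow A_{\sigma(k)}$, so $\mathring{S}_w\cap\BT$ is the torus $\{A_k=A_{\sigma(k)}\}$. The inclusion $\BC[\mathring{S}_w]\subset\CU(\wt{\bfs}(\bfi))$ via full rank is also fine. Your way of closing the induction step is a legitimate, more direct substitute for Lemma~\ref{lem:HartogsConverse} and Corollary~\ref{cor:ClusterCodim2}. There, $f\in\CU(\bfs_0)$ is regular on $W$ because freezing $k$ gives $\bfs_W$, and regular at the generic point of the irreducible divisor $V\cong\BC\times\mathring{S}_w$ because it is Laurent in $\mu_k(\bfs_0)$. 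This does need the explicit isomorphisms $\pi_W,\pi_V$ of Proposition~\ref{prop:WVDecomposition}. Note also that the open piece of $\mathring{S}_{s_iws_i}$ is $\BC^\times\times\mathring{S}_{ws_i}$, not $\BC^\times\times\mathring{S}_{s_iws_i}$, which has the wrong dimension. There are, however, two genuine gaps.

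First, your reduction to minimal-length elements does not close. Deletion--contraction expresses $\mathring{S}_x$ through $\mathring{S}_{s_jx}$ and $\mathring{S}_{s_jxs_j}$ only when $\ell(s_jxs_j)=\ell(x)-2$. The Geck--Pfeiffer reduction also needs length-preserving cyclic shifts, and these are unavoidable even in type $A$. Take $w=34512\in S_5$ in one-line notation. Its left and right descent sets are $\{s_2\}$ and $\{s_3\}$, so $\ell(s_jws_j)\geq \ell(w)=6$ for every $j$. Yet $w$ is a $5$-cycle, and the minimal length in its class is $4$. No localization can connect two strata of equal dimension. What is missing is the paper's reflection isomorphism $r_i:\mathring{S}_{s_iy}\isoto\mathring{S}_{ys_i}$ for $s_iy>y$, $ys_i>y$. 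It matches all cluster variables except one frozen variable, which is replaced by a Laurent monomial in frozen variables. This quasi-cluster transformation transfers the theorem (Corollary~\ref{cor:ClusterReflection}). Without it your induction never reaches elements like $34512$.

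Second, your local acyclicity argument uses a non-cover. The loci $\{A_k\neq 0\}$ and $\{\mu_k(A_k)\neq 0\}$ miss $Z_{kk}^S=\pi_V^{-1}(\{0\}\times\mathring{S}_w)$, which is nonempty. Already for $SL_2$, $xy=1+z^2$ has the points $x=y=0$, $z=\pm i$. Codimension two suffices for extending regular functions, but local acyclicity needs an honest finite cover by cluster localizations. The paper instead uses that $k$ is a sink. One exchange monomial at $k$ is then a frozen unit, so $A_k$ and the product $f$ of the mutable neighbours of $k$ cannot vanish simultaneously. By Muller's lemma \cite{Mul13}, $A_k$ and $f$ both give cluster localizations, whose seeds are controlled by $\bfs_W$ and $\bfs_V$ via the induction hypothesis.
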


We will prove this theorem in the rest of Section~\ref{sec:cluster}. The proof involves induction on $w$ reducing to minimal length elements in the conjugacy classes of the Weyl group:
\begin{itemize}
    \item In Proposition~\ref{prop:InductionStep}, we show that if $\ell(s_iws_i)=\ell(w)+2$ and the theorem holds for $\mathring{S}_w$ and $\mathring{S}_{ws_i}$, then it holds for $\mathring{S}_{s_iws_i}$ as well.
    \item In Corollary~\ref{cor:ClusterReflection}, we show that if $s_iw>w$ and $ws_i>w$, then the theorem holds for $\mathring{S}_{s_iw}$ if and only if it holds for $\mathring{S}_{ws_i}$.
    \item The base cases are proved in Lemma~\ref{lem:BaseCase}, where we show that the theorem holds when each simple reflection appears at most once in $w$.
\end{itemize}

By the commutative diagram, the cluster variables in $\wt{\bfs}(\bfi)$ correspond to the restrictions of the cluster variables in $\bfs(\bfj)$. We will denote by $A_i^S$ the image of a cluster variable $A_i$ in $\BC[\mathring{S}_w]$ before the isomorphism in Theorem~\ref{thm:MainThm} is established.

\begin{example}
We continue with our running example. For $\bfi=(-1,-2,-1)$, the quiver $\bfs(\bfj)$ is the quiver given in Example~\ref{eg:running}. The involution on the quiver is the horizontal flip, and folding gives the following quiver for $\wt{\bfs}(\bfi)$
\begin{center}
\begin{tikzpicture}[every node/.style={inner sep=0, minimum size=0.5cm, thick, fill=white, draw}, x=2cm, y=1cm]
\node (0) at (0,0) {\red{$-1$}};
\node (1) at (1,0) [circle]{\red{$1$}};
\node (2) at (2,0) [circle]{$3$};
\node (5) at (1,-1.5) {\red{$-2$}};
\node (6) at (2,-1.5) [circle]{$2$};
\drawpath{0,1,2,6,1,5,6}{black}
\end{tikzpicture}
\end{center}
where the red vertices have symmetrizer $d_i=\frac{1}{2}$ and the black vertices have symmetrizer $d_i=1$.

The seed $\wt{\bfs}(\bfi)$ gives a cluster structure for $\mathring{S}_{w_0}$. The mutation relation $A_3A_3'=A_1A_4+A_2$ in $\bfs(\bfj)$ folds to $A_3A_3'=A_1^2+A_2$ in $\wt{\bfs}(\bfi)$, as we have $A_1=g_{12}=g_{21}=A_4$ on $S$. The cluster algebra for $\wt{\bfs}(\bfi)$ is of finite type $B_3$, so one can also list all the cluster variables and directly show that it is isomorphic to $\BC[\mathring{S}_{w_0}]$.
\end{example}

\begin{remark}
\label{rem:Conjecture}
The constructions and many results in Section~\ref{sec:cluster} have obvious generalizations to $G$ of general type and $\theta$ quasi-split. We conjecture that $\mathring{S}_w$ has an upper cluster structure for all symmetric pairs.
\end{remark}

\subsection{Folding full rank seeds}
\label{sec:cluster:fullrank}

By \cite{BFZ05}*{Proposition~2.6}, the seeds for double Bruhat cells have full rank. We offer another perspective to Theorem~\ref{thm:MainThm} using upper cluster algebras.

\begin{lem}
\label{lem:FullRankFolding}
Let $\bfs$ be $\Gamma$-admissible. If $\bfs$ has full rank, then so does $\bfs^\Gamma$.
\end{lem}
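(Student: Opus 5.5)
The plan is to show directly that the folded matrix $(\ve^\Gamma_{\overline{k}\,\overline{j}})_{\overline{k}\in\overline{J}_\uf,\,\overline{j}\in\overline{J}}$ has trivial left kernel. We use the symmetrizers to move between row and column relations, and then pull a relation back to $\bfs$ by averaging over $\Gamma$.

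The first step is to rewrite the folded matrix as an orbit sum. For a mutable orbit $\overline{k}$ and any $\overline{j}$, fix $j\in\overline{j}$. Then
\[
\ve^\Gamma_{\overline{k}\,\overline{j}}=\sum_{k'\in\overline{k}}\ve_{k'j},
\]
and by condition (2) this does not depend on the representative $j$.

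Next, suppose we have a relation $\sum_{\overline{k}\in\overline{J}_\uf}c_{\overline{k}}\,\ve^\Gamma_{\overline{k}\,\overline{j}}=0$ for every $\overline{j}\in\overline{J}$. Lift it to $\bfs$ by setting $c_k=c_{\overline{k}}$ for each $k\in J_\uf$. Then for every $j\in J$,
\[
\sum_{k\in J_\uf}c_k\,\ve_{kj}=\sum_{\overline{k}}c_{\overline{k}}\sum_{k'\in\overline{k}}\ve_{k'j}=\sum_{\overline{k}}c_{\overline{k}}\,\ve^\Gamma_{\overline{k}\,\overline{j}}=0.
\]
The regrouping works because $J_\uf$ is the disjoint union of the mutable orbits, which is condition (1). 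So $(c_k)_{k\in J_\uf}$ lies in the left kernel of the submatrix $(\ve_{kj})_{k\in J_\uf,\,j\in J}$. Full rank of $\bfs$ means this submatrix has linearly independent rows, so $c_k=0$ for all $k$. Hence every $c_{\overline{k}}=0$, and the rows of the folded submatrix are linearly independent, which is full rank for $\bfs^\Gamma$.

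The only point needing care is the well-definedness of $\ve^\Gamma_{\overline{k}\,\overline{j}}$ independently of the chosen $j\in\overline{j}$. The lifting computation uses it for every $j$ in the orbit at once, and this is exactly what condition (2) supplies. If full rank were instead meant in the column sense, I would first transpose using the skew-symmetrizability identity $d_k\ve_{kj}=-d_j\ve_{jk}$, together with its folded analogue verified in Section~\ref{sec:prelim:folding}, and then apply the same averaging. The symmetrizers are positive, so this rescaling preserves rank.

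I do not expect a genuine obstacle here. Conditions (3) and (4) are not needed.
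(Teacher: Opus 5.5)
Your proof is correct, and it is the dual of the paper's argument.

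The paper regards $(\ve_{ij})_{i\in J_\uf,j\in J}$ as a $\Gamma$-equivariant map $L:\BC^{|J|}\to\BC^{|J_\uf|}$. It identifies the folded matrix with the restriction of $L$ to $\Gamma$-invariant vectors. It then gets surjectivity of that restriction from surjectivity of $L$ by averaging: if $Lx=y$ with $y$ invariant, then $\frac{1}{|\Gamma|}\sum_{\sigma\in\Gamma}\sigma x$ is an invariant preimage. You instead show that the folded matrix has trivial left kernel, by lifting a row relation to an orbit-constant row relation for $\bfs$.

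Your route needs no averaging or division by $|\Gamma|$. It also works directly with the stated definition $\ve^\Gamma_{\overline{i}\,\overline{j}}=\sum_{i'\in\overline{i}}\ve_{i'j}$. With the orbit-sum bases the paper uses, the matrix of the restricted map is instead the column-orbit sum $\sum_{j'\in\overline{j}}\ve_{ij'}=\frac{|\overline{j}|}{|\overline{i}|}\ve^\Gamma_{\overline{i}\,\overline{j}}$. This agrees with $\ve^\Gamma$ only after a harmless rescaling of rows and columns by orbit sizes. The paper's route, in turn, is shorter and more conceptual: the folded map is just the equivariant map on fixed vectors.

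One small point: your opening sentence announces the use of symmetrizers and averaging, but the argument you actually give uses neither. It uses only conditions (1) and (2), as you correctly note at the end.
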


\begin{proof}
$(\ve_{ij})_{i\in J_\uf,j\in J}$ defines a linear map $L$ from $\BC^{|J|}$ to $\BC^{|J_\uf|}$. Full rank of $\bfs$ is equivalent to surjectivity of $L$.

When $\bfs$ is $\Gamma$-admissible, $L$ is equivariant under the $\Gamma$-action, and the corresponding map $L^\Gamma$ for $\bfs^\Gamma$ is the restriction of $L$ to the $\Gamma$-invariants in $\BC^{|J|}$, with basis given by sums of the standard basis vectors in $\BC^{|J|}$ in each orbit. A standard linear algebra exercise shows that if $L$ is surjective, so is $L^\Gamma$.
\end{proof}

Recall the quotient map $\pi:\LP(\bfs)\to \LP(\bfs^\Gamma)$. By Lemma~\ref{lem:FoldVariableMutation}, $\pi$ is compatible with the quotient maps $\LP(\mu_{\overline{k}}(\bfs))\to \LP(\mu_{\overline{k}}(\bfs^\Gamma))$ for any mutable $\Gamma$-orbit $\overline{k}$. When $\bfs^\Gamma$ has full rank,
\[
\CU(\bfs^\Gamma)=\LP(\bfs^\Gamma)\cap \bigcap_{\overline{k}\in \overline{J}\uf} \LP(\mu_{\overline{k}}(\bfs^\Gamma)).
\]
Any $f\in \CU(\bfs)$ lies in $\LP(\bfs)$ and $\LP(\mu_{\overline{k}}(\bfs))$ for any $\overline{k}$, so $\pi(f)\in \CU(\bfs^\Gamma)$. This shows that $\pi$ restricts to a map between the upper cluster algebras $\CU(\bfs)\to \CU(\bfs^\Gamma)$. Let $\CI$ be its kernel.

\begin{prop}
\label{prop:UpperCAKernel}
Under the isomorphism $\CA(\bfs(\bfj))=\CU(\bfs(\bfj))\simeq \BC[G^{w,w^{-1}}]$, $\CI$ is identified with the kernel of $\BC[G^{w,w^{-1}}]\to \BC[\mathring{S}_w]$.
\end{prop}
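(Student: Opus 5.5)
The plan is to compare both kernels on a single cluster torus chart, where restriction to $\mathring{S}_w$ becomes exactly the map $\pi$ on Laurent polynomials. Write $\bfs=\bfs(\bfj)$ and $\Gamma=\{1,\sigma\}$ for the involution of Lemma~\ref{lem:FoldDBCSeed}. Let $U\subset G^{w,w^{-1}}$ be the open subset on which all cluster variables $\Delta(k;\bfj)$, $k\in J$, are nonzero. By Theorem~\ref{thm:DBCcluster} and \cite{BFZ05}, $U$ is the cluster torus: the functions $\Delta(k;\bfj)$ give an isomorphism $U\cong(\BC^\times)^{J}$ with $\BC[U]=\LP(\bfs)$. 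Under this identification, $\BC[G^{w,w^{-1}}]=\CU(\bfs)\subset\LP(\bfs)$ is the restriction of functions from $G^{w,w^{-1}}$ to $U$.

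Let $T_\Gamma\subset U$ be the subtorus cut out by $A_k=A_{\sigma(k)}$ for all $k$. Then $\BC[T_\Gamma]=\LP(\bfs^\Gamma)$, and restriction $\LP(\bfs)\to\BC[T_\Gamma]$ is exactly $\pi$.

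The key step is to show that $\mathring{S}_w\cap U=T_\Gamma$ as subvarieties of $U$. I would argue in four steps.

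\begin{enumerate}
\item The inclusion $\mathring{S}_w\cap U\subseteq T_\Gamma$ follows from Lemma~\ref{lem:FoldDBCSeed}. On $S$ we have $g=g^T$, so $\Delta((i,m_i-l);\bfj)=\Delta((i,m_i+l);\bfj)$.
\item Count dimensions. The word $\bfj$ has length $2\ell(w)$, so $|J|=r+2\ell(w)$. The orbits of $\sigma$ are the $r$ fixed vertices $(i,m_i)$ together with $\ell(w)$ pairs. Hence $\dim T_\Gamma=r+\ell(w)=\dim\mathring{S}_w$.
\item Show $\mathring{S}_w\cap U\neq\emptyset$. Since $\mathring{S}_w$ is irreducible (by \cite{Lu14}), the intersection is then a nonempty open subset of $\mathring{S}_w$, hence irreducible of dimension $r+\ell(w)$.
\item The intersection $\mathring{S}_w\cap U$ is closed in $U$, and therefore closed in the irreducible torus $T_\Gamma$ of the same dimension. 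It follows that $\mathring{S}_w\cap U=T_\Gamma$.
\end{enumerate}

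Granting this, the proposition follows. Take $f\in\CU(\bfs)=\BC[G^{w,w^{-1}}]$. Because $\mathring{S}_w\cap U$ is dense in the irreducible variety $\mathring{S}_w$, $f$ vanishes on $\mathring{S}_w$ if and only if it vanishes on $\mathring{S}_w\cap U=T_\Gamma$. That holds if and only if $\pi(f)=0$ in $\LP(\bfs^\Gamma)$, i.e.\ if and only if $f\in\CI$. This also recovers the commutativity of the square in Theorem~\ref{thm:MainThm} at the level of the $\pi$-image.

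The main obstacle is step~3: the nonemptiness of $\mathring{S}_w\cap U$, i.e.\ that no cluster minor $\Delta(k;\bfj)$ vanishes identically on $\mathring{S}_w$. I would try total positivity first. Take $h=x_{i_1}(t_1)\cdots$ (with $y$'s for negative letters), a totally positive element built from $\bfi$, and form $g=h\,h^T$. Then $g$ is symmetric, lies in $S$, and should lie in $G^{w,w^{-1}}$, since $h^T$ contributes the reversed and sign-flipped word $-\bfi^{-1}$. Moreover $g$ is totally positive in the sense of \cite{FZ99}, so all chamber minors $\Delta(k;\bfj)(g)$ are strictly positive. If checking the Bruhat cell membership of $g$ is delicate, a fallback is to use the totally positive part of $S$ from \cite{Bao26}, intersected with $\mathring{S}_w$, on which all generalized minors attached to $\bfj$ are positive.
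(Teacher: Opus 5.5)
Your argument is correct in outline and shares the paper's frame. Both proofs compare the two kernels on the cluster torus $U\cong(\BC^\times)^J$, where restriction to $\mathring{S}_w$ becomes $\pi$. Both then use irreducibility of $\mathring{S}_w$ to pass back to $G^{w,w^{-1}}$; the paper phrases this as two prime ideals with the same localization.

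The difference is in how $\mathring{S}_w\cap U=T_\Gamma$ is shown. You prove only $\subseteq$ and recover equality from a dimension count, which forces you to prove nonemptiness separately. The paper gets both inclusions at once:
\begin{itemize}
\item Transpose preserves $G^{w,w^{-1}}$, because $(B^\pm)^T=B^\mp$ and $\dot w^T=\dot w^{-1}$.
\item By Lemma~\ref{lem:FoldDBCSeed}, transpose preserves $U$ and acts on the coordinates of $U\cong(\BC^\times)^J$ by the permutation $\sigma$.
\item So if $g\in T_\Gamma$, then $g^T\in U$ has the same coordinates as $g$. Hence $g^T=g$, and since $R=S$, $g\in\mathring{S}_w$.
\end{itemize}
Thus $U\cap\mathring{S}_w=U\cap R$ is exactly the fixed locus $T_\Gamma$, which is nonempty for free. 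Your ``main obstacle'' in step~3 disappears, and you need neither the dimension formula from \cite{Lu14} nor total positivity at this point. Your route is heavier but not wrong.

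If you keep your route, step~3 needs a correction. The element $hh^T$ realizes the concatenation $(\bfi,-\bfi^{-1})$, whose first component is $uv^{-1}$. In general this differs from $w=v^{-1}u$. For example, with $u=s_1$ and $v=s_2$ in $SL_3$, $hh^T$ lies in $G^{s_1s_2,s_2s_1}$, not in $G^{w,w^{-1}}=G^{s_2s_1,s_1s_2}$. Since $\bfj=(-\bfi^{-1},\bfi)$, the right choice is $g=h^Th$. This $g$ is symmetric of determinant $1$ and is the Fomin--Zelevinsky product attached to $\bfj$ with positive parameters. Hence all $\Delta(k;\bfj)(g)>0$, as you intended.
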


\begin{proof}
Let $\CI'$ be the kernel of $\BC[G^{w,w^{-1}}]\to \BC[\mathring{S}_w]$. $\CI$ is a prime ideal since the image of $\pi|_{\CU(\bfs)}$ is a subring of the integral domain $\CU(\bfs^\Gamma)$, while $\CI'$ is prime since $\mathring{S}_w$ is irreducible.

Now $\LP(\bfs(\bfj))$ is the localization of $\CU(\bfs(\bfj))$ over all mutable cluster variables $\bfs(\bfj)$. Then $\CI$ localize to the prime ideal in $\LP(\bfs(\bfj))$ generated by $A_i-A_{\sigma(i)}$ for all $i\in J$, where $\sigma$ is the involution in Lemma~\ref{lem:FoldDBCSeed}.

On the geometric side, let $\BT\subset G^{w,w^{-1}}$ be the torus defined by the seed $\bfs(\bfj)$. By Lemma~\ref{lem:FoldDBCSeed}, the transpose map permutes the coordinates of $\BT\simeq (\BC^\x)^{|J|}$, so the prime ideal for $\BT\cap \mathring{S}_w=\BT\cap R$ is also generated by $A_i-A_{\sigma(i)}$. This shows that $\CI$ is identified with $\CI'$.
\end{proof}

\subsection{Reduction map}
\label{sec:cluster:reduction}

Let $w\in W$ and $i\in I$ be such that $\ell(s_iws_i)=\ell(w)+2$. There is an isomorphism
\[
B^+s_iws_iB^+\xrightarrow{\sim}B^+s_iB^+\x^{B+} B^+ws_iB^+.
\]
The representative for the first factor can be chosen to be of the form $x_i(a)s_i$ for some unique $a\in \BC$. Therefore, for any $g\in \mathring{S}_{s_iws_i}$, there is a unique $t\in \BC$ such that $y_i(t)\dot{s}_i^{-1}g\in B^+ws_iB^+$. Then $g':=y_i(t)\dot{s}_i^{-1}g\dot{s}_ix_i(t)\in \mathring{S}_{ws_i}\sqcup \mathring{S}_w$.

Note that if $g'\in B^+ws_iB^+$, then $\Delta_{ws_i\omega_i,\omega_i}(g')\neq0$, while if $g\in B^+wB^+$, then we must have $\Delta_{ws_i\omega_i,\omega_i}(g')=0$. By construction, $g'\in \mathring{S}_{ws_i}\sqcup \mathring{S}_w$, so $g'\in \mathring{S}_{ws_i}$ if and only if $\Delta_{ws_i\omega_i,\omega_i}(g')\neq0$. In terms of $g$, we have
\[
\Delta_{ws_i\omega_i,\omega_i}(g')=\Delta_{ws_i\omega_i,\omega_i}(y_i(t)\dot{s}_i^{-1}g\dot{s}_ix_i(t))=\Delta_{s_iws_i\omega_i,s_i\omega_i}(g).
\]
Let $W,V\subset \mathring{S}_{s_iws_i}$ be non-vanishing and vanishing sets of $\Delta_{s_iws_i\omega_i,s_i\omega_i}$ respectively.

Let $\pi_W:W\to \BC^\x\x \mathring{S}_{ws_i}$ be given by
\[
\pi_W(g)=(\Delta_{s_iws_i\omega_i,\omega_i}(g),g'),
\]
and $\pi_V:V\to \BC\x \mathring{S}_w$
be given by
\[
\pi_V(g)=(\Delta_{s_iw\omega_i,\omega_i}(g),g').
\]
Let $\bfw$ be a reduced word for $w$. Consider the seeds $\bfs_0=\wt{\bfs}(-\bfw,-i,i)$ for $\mathring{S}_{s_iws_i}$, $\bfs_W=\wt{\bfs}(-\bfw,-i)$ for $\mathring{S}_{ws_i}$ and $\bfs_V=\wt{\bfs}(-\bfw)$ for $\mathring{S}_w$. Note that the function $\Delta_{s_iw\omega_i,s_i\omega_i}$ used to define $W$ and $V$ is a mutable cluster variable in $\bfs_0$, $\Delta_{s_iws_i\omega_i,\omega_i}$ is a frozen variable in $\bfs_0$, and $\Delta_{s_iw\omega_i,\omega_i}$ is the mutation of $\Delta_{s_iw\omega_i,s_i\omega_i}$ in $\bfs_0$.

The following proposition parallels \cite{GLS26}*{Theorem~3.13}. We modify the arguments to get around the UFD assumption.

\begin{prop}
\label{prop:WVDecomposition}
$\pi_W,\pi_V$ are isomorphisms which identify cluster variables in $\bfs_W$ and $\bfs_V$ to the restrictions of cluster variables in $\bfs_0$ to $W$ and $V$ respectively.
\end{prop}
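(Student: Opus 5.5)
We prove that $\pi_W$ and $\pi_V$ are isomorphisms by writing down explicit inverses, and then compare cluster variables minor by minor.

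\textbf{Inverses.} Given $(c,h)$ with $h\in \mathring{S}_{ws_i}\sqcup\mathring{S}_w$, we look for $g=\dot{s}_ix_i(-t)h\,y_i(-t)\dot{s}_i^{-1}$ with $t$ to be determined. Two facts make this work.
\begin{itemize}
\item Since $\theta(x_i(t))=y_i(-t)$ and $\theta(\dot{s}_i)=\dot{s}_i$, the twisted conjugation $h\mapsto x_i(-t)h\,\theta(x_i(-t))^{-1}$ preserves $S$. So every such $g$ lies in $S$, i.e.\ it is symmetric.
\item Conversely, the parameter $t$ in the definition of $g'$ is a regular function of $g$. It is the unique value placing $y_i(t)\dot{s}_i^{-1}g$ in $B^+ws_iB^+$. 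Equivalently, it is obtained from the ratio $\Delta_{s_iws_i\omega_i,\omega_i}(g)/\Delta_{s_iws_i\omega_i,s_i\omega_i}(g)$ on $W$, via the $SL_2$ computation in direction $i$.
\end{itemize}

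Next we recover $t$ from $(c,h)$. A direct $SL_2$ calculation gives
\[
\Delta_{s_iws_i\omega_i,\omega_i}(g)=(\text{const})\cdot t\cdot\Delta_{ws_i\omega_i,\omega_i}(h)+\Delta_{ws_i\omega_i,s_i\omega_i}(h)\text{-type term}.
\]
This is affine-linear in $t$ with leading coefficient $\Delta_{ws_i\omega_i,\omega_i}(h)$, which is nonzero on $\mathring{S}_{ws_i}$. So on $W$ we solve for $t=t(c,h)$ regularly, which gives the inverse of $\pi_W$.

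On $V$ that leading coefficient vanishes, and $h\in\mathring{S}_w$. Here we instead use $\Delta_{s_iw\omega_i,\omega_i}(g)$, which is affine in $t$ with leading coefficient a frozen minor of $h$ that is invertible on $\mathring{S}_w$ (namely $\Delta_{w\omega_i,\omega_i}(h)$). This gives the inverse of $\pi_V$, with $c$ ranging over all of $\BC$.

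We must also check that the resulting $g$ lies in $B^+s_iws_iB^+$ and in the correct piece $W$ or $V$. This follows from the Bruhat decomposition $B^+s_iws_iB^+\simeq B^+s_iB^+\times^{B^+}B^+ws_iB^+$ together with symmetry.

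\textbf{Cluster variables.} The seeds $\bfs_0,\bfs_W,\bfs_V$ come from the words $(-\bfw,-i,i)$, $(-\bfw,-i)$, $(-\bfw)$. After folding, we only need to compare the minors $\Delta(k;\bfj)$ on the unfolded side by Lemma~\ref{lem:FoldDBCSeed}, and these are all of the form $\Delta_{x\omega_j,y\omega_j}$.

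For indices not of colour $i$, or away from the end of $\bfj$, the minor is invariant under left multiplication by $y_i(t)\dot{s}_i^{-1}$ and right multiplication by $\dot{s}_ix_i(t)$ up to the shift $x\mapsto s_ix$, $y\mapsto s_iy$. This is because $s_i\omega_j=\omega_j$ for $j\neq i$, and $x_i,y_i$ act trivially on the relevant extremal vectors when $s_ix>x$. So $\Delta(k;\bfj_0)(g)=\Delta(k;\bfj_W)(g')$, and likewise for $\bfj_V$.

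For the colour-$i$ indices near the middle, the comparison uses the identities above. In $\bfs_W$, the variables $\Delta_{s_iws_i\omega_i,\omega_i}$ (frozen) and $\Delta_{s_iw\omega_i,s_i\omega_i}$ (inverted on $W$) account for the extra $\BC^\times$-coordinate and the removed vertex. In $\bfs_V$, the cluster variable $\Delta_{s_iw\omega_i,s_i\omega_i}$ vanishes, and the mutated variable $\Delta_{s_iw\omega_i,\omega_i}$ becomes the $\BC$-coordinate. In the latter case the remaining variables match those of $\bfs(-\bfw)$ after deleting the two middle $i$-letters.

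\textbf{Main obstacle.} The hardest step is the $V$ case. It needs two things:
\begin{itemize}
\item Showing that the exchange relation for $\Delta_{s_iw\omega_i,s_i\omega_i}$, restricted to $V$ where this minor vanishes, expresses the monomials correctly, so that the remaining minors of $g$ genuinely equal those of $g'$ on $\mathring{S}_w$.
\item Checking that the regularity of the inverse does not rely on factoriality.
\end{itemize}
I would handle both by working with explicit matrix formulas in the rank-one $SL_2$ slice, as in \cite{GLS26}. The affine-linearity in $t$ then replaces the UFD divisibility argument used there.
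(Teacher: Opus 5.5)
Your route is the paper's. You invert $\pi_W$ and $\pi_V$ by solving the affine-linear identities $\Delta_{s_iws_i\omega_i,\omega_i}(g)=t\,\Delta_{ws_i\omega_i,\omega_i}(g')-\Delta_{ws_i\omega_i,s_i\omega_i}(g')$ and $\Delta_{s_iw\omega_i,\omega_i}(g)=t\,\Delta_{w\omega_i,\omega_i}(g')-\Delta_{w\omega_i,s_i\omega_i}(g')$ for $t$, whose leading coefficients are units on $\mathring{S}_{ws_i}$ and $\mathring{S}_w$ respectively. You then match cluster variables through $\Delta_{v\omega_j,\omega_j}(g')=\Delta_{s_iv\omega_j,s_i\omega_j}(g)$ for $s_iv>v$. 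None of the following changes the strategy, but several statements need correcting.

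\emph{The inverse formula.} Inverting $g'=y_i(t)\dot{s}_i^{-1}g\dot{s}_ix_i(t)$ gives $g=\dot{s}_i\,y_i(-t)\,h\,x_i(-t)\,\dot{s}_i^{-1}$. You have $x_i$ and $y_i$ interchanged, and with your formula one gets $g'=y_i(t)x_i(-t)\,h\,y_i(-t)x_i(t)\neq h$.

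\emph{The ratio.} $\Delta_{s_iws_i\omega_i,\omega_i}(g)/\Delta_{s_iws_i\omega_i,s_i\omega_i}(g)$ is not $t$. Since $\Delta_{s_iws_i\omega_i,s_i\omega_i}(g)=\Delta_{ws_i\omega_i,\omega_i}(g')$, your own affine formula shows the ratio equals $t-\Delta_{ws_i\omega_i,s_i\omega_i}(g')/\Delta_{ws_i\omega_i,\omega_i}(g')$. The correction term is generically nonzero: for $SL_3$, $w=s_2$, $i=1$ it is $g'_{32}/g'_{31}$ up to sign. Regularity of $g\mapsto t$ should instead be taken from the isomorphism $B^+s_iws_iB^+\simeq B^+s_iB^+\times^{B^+}B^+ws_iB^+$.

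\emph{Landing in the right piece.} This is not a matter of symmetry. For $V$ it is length additivity: $g=x_i(t)\dot{s}_i\,h\,x_i(-t)\dot{s}_i^{-1}\in B^+s_iB^+\cdot B^+wB^+\cdot B^+s_iB^+=B^+s_iws_iB^+$. For $W$ you only get $g\in B^+s_iws_iB^+\sqcup B^+s_iwB^+$ a priori. It is $c\neq0$ that excludes the second cell, because $\Delta_{s_iws_i\omega_i,\omega_i}$ vanishes on $B^+s_iwB^+$ (as $s_iws_i\omega_i=s_iw\omega_i-s_iw\alpha_i$ with $s_iw\alpha_i>0$). Then $h\,x_i(-t)\dot{s}_i^{-1}$ must lie in $B^+ws_iB^+$, and uniqueness of $t$ gives $g'=h$.

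\emph{The anticipated obstacle on $V$.} It does not exist. The matching identity holds pointwise for every $g$ and every colour $j$, including $j=i$. It follows from right $U^+$-invariance and left $U^-$-invariance of $\Delta_{\omega_j,\omega_j}$, using $\dot{v}^{-1}y_i(t)\dot{v}\in U^-$ when $s_iv>v$. So no exchange relation and no factoriality enters the $V$ case. The only new feature there is that the first coordinate of $\pi_V$ is the mutated variable $\Delta_{s_iw\omega_i,\omega_i}$, which can vanish.
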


\begin{equation*}
    \begin{tikzcd}
    (W,\bfs_0|_W) \ar[r,hook]\ar[d,"\pi_W"] & (\mathring{S}_{s_iws_i},\bfs_0) & (V,\bfs_0|_V) \ar[l,hook']\ar[d,"\pi_V"]\\
    \BC^\x \x (\mathring{S}_{ws_i},\bfs_W) &  & \BC\x (\mathring{S}_w,\bfs_V)
    \end{tikzcd}
\end{equation*}

\begin{proof}
For $\pi_W$, we have
\begin{align*}
\Delta_{s_iws_i\omega_i,\omega_i}(g)&=\Delta_{\omega_i,\omega_i}(\dot{s}_i^{-1}\dot{w}^{-1}y_i(-t)g'x_i(-t)\dot{s}_i^{-1})\\
&=-\Delta_{\omega_i,\omega_i}(\dot{s}_i^{-1}\dot{w}^{-1}g'y_i(-t))\\
&=t\Delta_{ws_i\omega_i,\omega_i}(g')-\Delta_{ws_i\omega_i,s_i\omega_i}(g')
\end{align*}
This shows that
\[
t=\frac{\Delta_{s_iws_i\omega_i,\omega_i}(g)+\Delta_{ws_i\omega_i,s_i\omega_i}(g')}{\Delta_{ws_i\omega_i,\omega_i}(g')}
\]
and $\pi_W$ is an isomorphism.

Similarly, for $\pi_V$, we have
\[
t=\frac{\Delta_{s_iw\omega_i,\omega_i}(g)+\Delta_{w\omega_i,s_i\omega_i}(g')}{\Delta_{w\omega_i,\omega_i}(g')},
\]
so $\pi_V$ is an isomorphism.

The matching of cluster variables follows from the equality
\begin{equation}
\label{eq:MatchVariable}
\Delta_{v\omega_j,\omega_j}(g')=\Delta_j(\dot{v}^{-1}y_i(t)s_i^{-1}gs_ix_i(t))=\Delta_{s_iv\omega_j,s_i\omega_j}(g)
\end{equation}
for any $v$ such that $s_iv>v$.
\end{proof}

Proposition~\ref{prop:WVDecomposition} is used in the induction step of Theorem~\ref{thm:MainThm}. We first need a few lemmas.

\begin{lem}
\label{lem:HartogsConverse}
Let $X=\Spec R$ be a Noetherian normal integral scheme and $U$ be open in $X$. If $\CO(U)=R$, then $X-U$ has codimension $\geq2$.
\end{lem}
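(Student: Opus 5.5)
We argue by contradiction, using the fact that a Noetherian normal domain is the intersection of its localizations at height-one primes. Suppose $X-U$ has codimension $1$, i.e.\ some irreducible component $Z$ of the closed set $X-U$ has codimension exactly $1$. Then $Z=V(\fp)$ for a height-one prime $\fp\subset R$. Since $X$ is normal and Noetherian, $R_\fp$ is a discrete valuation ring; let $\nu_\fp$ be the corresponding valuation on the fraction field $\operatorname{Frac}(R)$.

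Next we produce a rational function on $X$ that is regular on all of $U$ but has a pole along $Z$. Choose a nonzero element $a\in\fp$; then $\nu_\fp(a)\geq 1$. The zero locus $V(a)$ has finitely many irreducible components, and by Krull's principal ideal theorem each of them has codimension $1$. They correspond to the finitely many height-one primes $\fp=\fp_1,\fp_2,\dots,\fp_r$ containing $a$. We look for $f=b/a^N$ with $b\in R$ such that:
\begin{itemize}
    \item $f$ is regular along every $\fp_j$ with $j\geq 2$ whose locus meets $U$;
    \item $\nu_\fp(f)<0$.
\end{itemize}
By prime avoidance we can pick $c\in\bigcap_{j\geq2}\fp_j\setminus\fp$. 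For large $N$, the element $b=c^N$ satisfies $\nu_{\fp_j}(b)\geq N\nu_{\fp_j}(a)$ for all $j\geq 2$, while $\nu_\fp(b)=0$. Then $f=c^N/a^N$ has $\nu_{\fp_j}(f)\geq0$ for $j\geq2$ and $\nu_\fp(f)=-N\nu_\fp(a)<0$. At every other height-one prime $\fq$, the element $a$ is a unit in $R_\fq$, so $\nu_\fq(f)\geq 0$.

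It remains to see that $f\in\CO(U)$. The set $U$ is covered by open affines $D(h)\subset U$. Each ring $R_h$ is a Noetherian normal domain, so $R_h$ is the intersection of its localizations at its height-one primes. These are exactly the height-one primes $\fq$ of $R$ with $h\notin\fq$, i.e.\ those whose locus $V(\fq)$ meets $D(h)$. The only height-one prime at which $f$ has negative valuation is $\fp$, and $V(\fp)=Z\subset X-U$ is disjoint from $D(h)$. Hence $f\in R_h$ for every such $h$, and these local sections glue to give $f\in\CO(U)$. But $\CO(U)=R\subset R_\fp$, which forces $\nu_\fp(f)\geq 0$, a contradiction. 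So every component of $X-U$ has codimension $\geq 2$.

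The main point needing care is the construction of $f$: it must avoid poles along the other divisors through $a$ while keeping a pole along $Z$. Prime avoidance together with taking a high power $N$ handles this. A cleaner alternative is to apply the algebraic Hartogs property to the open set $U'=X-Z\supseteq U$, though the concrete construction above avoids needing it.
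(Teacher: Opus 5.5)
Your overall plan matches the paper's: build an element $f$ of the fraction field whose only pole among the height-one primes is at $\fp$, deduce $f\in\CO(U)$, and contradict $\CO(U)=R\subseteq R_\fp$. Your gluing argument over affines $D(h)\subseteq U$ is correct, and it spells out a step the paper leaves implicit.

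However, the construction of $f$ fails as written. For $f=c^N/a^N=(c/a)^N$ one has
\[
\nu_{\fp_j}(f)=N\bigl(\nu_{\fp_j}(c)-\nu_{\fp_j}(a)\bigr),
\]
and its sign does not depend on $N$. So the claim $\nu_{\fp_j}(c^N)\geq N\nu_{\fp_j}(a)$ ``for large $N$'' holds only if $\nu_{\fp_j}(c)\geq\nu_{\fp_j}(a)$ already. Nothing ensures this: you only know $c\in\fp_j$, i.e.\ $\nu_{\fp_j}(c)\geq 1$, while $\nu_{\fp_j}(a)$ can be arbitrarily large. Concretely, take $R=\BC[x,y]$, $\fp=(x)$, $a=xy^2$, $\fp_2=(y)$, $c=y$ and $U=X-V(x)$. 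Then $f=(xy)^{-N}$, which has a pole along $V(y)$. That divisor meets $U$, so $f\notin\CO(U)$ and the argument produces no contradiction.

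The repair is to decouple the exponents and take $f=c^N/a$. Then:
\begin{itemize}
\item $\nu_\fp(f)=-\nu_\fp(a)<0$;
\item $\nu_{\fp_j}(f)\geq N-\nu_{\fp_j}(a)\geq 0$ for all $j\geq 2$ once $N\geq\max_j\nu_{\fp_j}(a)$;
\item $\nu_\fq(f)\geq 0$ at every other height-one prime $\fq$, since $a$ is a unit in $R_\fq$.
\end{itemize}
With this change your proof is essentially the paper's. The paper takes $f=x^{-1}\prod_i y_i^{d_i}$, with $x$ a uniformizer of $R_\fp$ and, for each prime $\fq_i$ where $x^{-1}$ has a pole, some $y_i\in\fq_i\setminus\fp$ raised to a large power. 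Your single $c$ serves the same purpose.

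Two minor points. First, the existence of $c\in\bigcap_{j\geq2}\fp_j\setminus\fp$ comes from primality of $\fp$, not prime avoidance: no $\fp_j$ with $j\geq2$ lies in $\fp$, hence neither does $\fp_2\cdots\fp_r$. Second, the negation of ``codimension $\geq 2$'' also allows $U=\emptyset$. That case is excluded at once, since then $\CO(U)=0\neq R$.
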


\begin{proof}
By assumption, $R$ is equal to the intersection of all its localizations at height 1 primes. Suppose $X-U$ contains a height 1 prime $\fp$. It suffices to find $f\in K(R)$ which is not contained in $R_\fp$ but is contained in $R_\fq$ for all other height 1 primes of $R$, so that $R\subsetneq\CO(U)$.

Since $R$ is normal, $R_\fp$ is a DVR, let $x\in R$ be a uniformizer. Then $\nu_\fp(x^{-1})=-1$, $\nu_{\fq_i}(x^{-1})=0$ for all but finitely many primes $\fq_i$, for which $\nu_{\fq_i}(x^{-1})<0$. For each $\fq_i$, let $y_i\in \fq_i\backslash\fp$, so $\nu_\fp(y_i)=0$ and $\nu_{\fq_i}(y_i)>0$. Then for $d_i$ large enough, $f=x^{-1}\prod y_i^{d_i}$ satisfies $\nu_\fp(f)=-1$ and $\nu_\fq(f)\geq0$ for all $\fq\neq \fp$ as desired.
\end{proof}

\begin{cor}
\label{cor:ClusterCodim2}
Let $\bfs$ be a seed with full rank, and suppose $X=\Spec \CU(\bfs)$ is Noetherian. Let $Z_{ij}=\{A_i=A_j=0\}$ for distinct $i,j\in J_\uf$ and $Z_{kk}=\{A_k=\mu_k(A_k)=0\}$ for $k\in J_\uf$. Then $Z_{ij}$ and $Z_{kk}$ all have codimension $\geq 2$ in $X$.
\end{cor}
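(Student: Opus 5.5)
The idea is to apply Lemma~\ref{lem:HartogsConverse} with $R=\CU(\bfs)$ and a suitable open set $U\subset X$ whose complement contains $Z_{ij}$ (respectively $Z_{kk}$).

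\emph{Step 1: normality.} First I would check that $R$ is a Noetherian normal domain. It is a domain because it lies inside the Laurent polynomial ring $\LP(\bfs)$. For normality, note that $R=\LP(\bfs)\cap\bigcap_{k}\LP(\mu_k(\bfs))$ by Theorem~\ref{thm:FullRankUpperBound}, and all these rings share the fraction field of $R$. Each Laurent polynomial ring is integrally closed, and an intersection of integrally closed subrings of a common field is integrally closed. Noetherianity is part of the hypothesis.

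\emph{Step 2: the open set.} The cluster tori $\Spec\LP(\bfs')$ for $\bfs'\in\{\bfs\}\cup\{\mu_k(\bfs)\}_{k\in J_\uf}$ are principal open subsets of $X$. Concretely, $\Spec\LP(\bfs)$ is the non-vanishing locus $D(\prod_{l\in J}A_l)$, since $R[\prod_l A_l^{-1}]=\LP(\bfs)$; this holds because every element of $\LP(\bfs)$ is a Laurent monomial times an element of $R$, as monomials in the $A_l$ lie in $R$. Similarly $\Spec\LP(\mu_k(\bfs))=D(\mu_k(A_k)\prod_{l\neq k}A_l)$. Let $U$ be the union of these $|J_\uf|+1$ charts. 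Then
\[
\CO(U)=\bigcap_{\bfs'}\CO(\Spec\LP(\bfs'))=\LP(\bfs)\cap\bigcap_{k}\LP(\mu_k(\bfs))=R
\]
by Theorem~\ref{thm:FullRankUpperBound}. Lemma~\ref{lem:HartogsConverse} then gives $\operatorname{codim}(X-U)\geq2$.

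\emph{Step 3: the loci lie outside $U$.} On $\Spec\LP(\bfs)$ all $A_l$ are nonvanishing. On $\Spec\LP(\mu_k(\bfs))$ all $A_l$ with $l\neq k$ are nonvanishing, and so is $\mu_k(A_k)$. Take a point of $Z_{ij}$ with $i\neq j$. It fails to lie in the first chart. It fails to lie in the $k$-th chart, since at least one of $i,j$ differs from $k$. Hence $Z_{ij}\subset X-U$.

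Take a point of $Z_{kk}$. It is not in the first chart, because $A_k=0$. It is not in the $k$-th chart, because $\mu_k(A_k)=0$. It is not in the $k'$-th chart for $k'\neq k$, because $A_k=0$ there. Hence $Z_{kk}\subset X-U$ as well.

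\emph{Main obstacle.} The main point needing care is Step 2, namely identifying each $\Spec\LP(\bfs')$ with an open subscheme of $X$ via the principal localization $R_f=\LP(\bfs')$. This requires both $R\subset\LP(\bfs')$ and that the cluster variables of $\bfs'$ lie in $R$. The latter holds by the Laurent phenomenon, since cluster variables belong to $\CU$. The remaining steps are formal.
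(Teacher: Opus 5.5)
Your proof is correct and follows the paper's argument. You show $\CU(\bfs)$ is normal, take $U$ to be the union of the cluster tori of $\bfs$ and its adjacent seeds, get $\CO(U)=\CU(\bfs)$ from Theorem~\ref{thm:FullRankUpperBound}, and conclude with Lemma~\ref{lem:HartogsConverse}. The differences are cosmetic: you check normality and the openness of the tori by hand where the paper cites \cite{Mul13}, and you need only the containment $Z_{ij},Z_{kk}\subset X-U$ rather than the paper's stated equality.
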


\begin{proof}
$\CU(\bfs)$ is integrally closed by \cite{Mul13}*{Proposition~2.1}. Let $U$ be the union of the tori in $X$ defined by $\bfs$ and its adjacent seeds. Then $X-U$ is the union of the $Z_{ij}$ and $Z_{kk}$. By Theorem~\ref{thm:FullRankUpperBound}, $\CU(\bfs)=\CO(U)$. Then Lemma~\ref{lem:HartogsConverse} shows that $X-U$ has codimension $\geq2$.
\end{proof}

The codimension $\geq2$ property is nontrivial for $\mathring{S}_w$ because $\mathring{S}_w$ is in general not factorial. After establishing it, we can then use the same method as in \cite{BFZ05} to prove the cluster structures. The argument is summarized by the following lemma.

For any $w\in W$ and seed $\bfs=\wt{\bfs}(\bfi)$ for $\mathring{S}_w$, let $Z_{ij}^S=\{A_i^S=A_j^S=0\}$ for distinct $i,j\in J_\uf$ and $Z_{kk}^S=\{A_k^S=\mu_k(A_k^S)=0\}$ for $k\in J_\uf$ be the corresponding sets in $\mathring{S}_w$ for the proposed cluster structure.

\begin{lem}
\label{lem:Codim2Cluster}
If $Z_{ij}^S$ and $Z_{kk}^S$ all have codimension $\geq2$ in $\mathring{S}_w$, then $\CU(\bfs)\simeq \BC[\mathring{S}_w]$.
\end{lem}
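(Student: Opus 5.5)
We follow the standard argument of \cite{BFZ05} for double Bruhat cells, replacing the UFD hypothesis by the codimension assumption. Write $\bfs=\wt{\bfs}(\bfi)$ and define $\phi:\LP(\bfs)\to \BC(\mathring{S}_w)$ by $A_i\mapsto A_i^S$. First, $\phi$ is well defined and injective. The $A_i^S$ are nonzero, being restrictions of minors that are nonvanishing on $\mathring{S}_w$ up to the stratum structure. They are algebraically independent, since the torus $\BT\cap\mathring{S}_w$ from the proof of Proposition~\ref{prop:UpperCAKernel} is isomorphic to $(\BC^\x)^{|\overline{J}|}$ with coordinates the $A_{\overline{i}}^S$. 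Hence $\phi$ identifies $\LP(\bfs)$ with the coordinate ring of the open torus $\BT^S=\{A_i^S\neq 0\ \forall i\}\subset \mathring{S}_w$. Similarly, for each mutable $\overline{k}$, Lemma~\ref{lem:FoldVariableMutation} together with the commutative diagram shows that the mutated variable $\mu_{\overline{k}}(A_{\overline{k}})$ maps to the restriction of the mutated minor in $\bfs(\bfj)$, which is regular on $\mathring{S}_w$. So $\phi(\LP(\mu_{\overline{k}}(\bfs)))$ is the coordinate ring of the adjacent torus $\BT^S_k$.

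Next we show $\BC[\mathring{S}_w]\subset \phi(\CU(\bfs))$. The frozen variables $A_j^S$ are units on $\mathring{S}_w$: they are restrictions of frozen minors of $G^{w,w^{-1}}$. Therefore the complement of $U^S=\BT^S\cup\bigcup_k\BT^S_k$ in $\mathring{S}_w$ is exactly $\bigcup Z_{ij}^S\cup\bigcup Z_{kk}^S$. By hypothesis this has codimension $\geq 2$. Since $\mathring{S}_w$ is smooth, hence normal, Hartogs gives $\BC[\mathring{S}_w]=\CO(U^S)=\BC[\BT^S]\cap\bigcap_k\BC[\BT^S_k]$. The folded seed has full rank by Lemma~\ref{lem:FullRankFolding}, so Theorem~\ref{thm:FullRankUpperBound} identifies the right-hand side with $\phi(\CU(\bfs))$. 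This proves $\phi(\CU(\bfs))=\BC[\mathring{S}_w]$ in one stroke, because the equality of $\CO(U^S)$ with the intersection of the torus coordinate rings is exact. The reverse inclusion is thus automatic from the full-rank description, and there is no need to check regularity of every cluster variable separately.

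Finally, compatibility with the diagram follows because $\pi$ and restriction agree on the generators $A_i$. The main obstacle is the first step: checking that the restricted variables and their single mutations really give tori, i.e.\ are nonzero and algebraically independent on $\mathring{S}_w$. For this I would use Proposition~\ref{prop:UpperCAKernel}, which identifies $\BT\cap\mathring{S}_w$ with the fixed locus $\{A_i=A_{\sigma(i)}\}$ of the coordinate torus. I would also use the analogous statement for each adjacent seed of $\bfs(\bfj)$ after orbit mutation. There Lemma~\ref{lem:FoldVariableMutation} ensures that the folded exchange relation matches the restricted one.
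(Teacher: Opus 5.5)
Your proposal is correct and follows essentially the same route as the paper's proof. Both arguments get full rank of the folded seed from Lemma~\ref{lem:FullRankFolding} and Theorem~\ref{thm:FullRankUpperBound}, identify the seed torus and the adjacent tori in $\mathring{S}_w$ with the transpose-fixed loci of the corresponding tori in $G^{w,w^{-1}}$ (using $R=S$), and apply Hartogs on the smooth variety $\mathring{S}_w$ to match the two intersections of Laurent rings. You additionally spell out the injectivity of $A_i\mapsto A_i^S$ and the equality of the complement of the tori with $\bigcup Z^S_{ij}\cup\bigcup Z^S_{kk}$, which the paper leaves implicit.
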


\begin{proof}
The seed $\bfs$ has full rank by Lemma~\ref{lem:FullRankFolding}. By Theorem~\ref{thm:FullRankUpperBound}, $\CU(\bfs)$ is equal to the intersection of the Laurent polynomial rings for $\bfs$ and its adjacent seeds $\mu_k(\bfs)$ for $k\in J_\uf$.

Now we study the corresponding functions on $\mathring{S}_w$. By restriction and folding of the mutation relations, the functions $A_k^S$ for $k\in J_\fro$ are non-vanishing on $\mathring{S}_w$, and for each $k\in J_\uf$, $\mu_k(A_k^S)$ is regular on $\mathring{S}_w$. Furthermore, since $R=S$, the seed $\bfs$ and its adjacent seeds define the tori $\BT_\bfs$ and $\BT_{\mu_k(\bfs)}$ in $\mathring{S}_w$, which are equal to the intersection of the corresponding tori in $G^{w,w^{-1}}$ with $R$.

Since $Z_{ij}^S$ and $Z_{kk}^S$ all have codimension $\geq2$, so does the complement
\[
\mathring{S}_w-\left(\BT_\bfs \cup \bigcup_{k\in J_\uf} \BT_{\mu_k(\bfs)}\right).
\]
Therefore, $\BC[\mathring{S}_w]$ is also equal to the intersection of the same Laurent polynomial rings, and the desired isomorphism follows.
\end{proof}

\begin{prop}
\label{prop:InductionStep}
Let $w\in W$ and $i\in I$ be such that $\ell(s_iws_i)=\ell(w)+2$. If Theorem~\ref{thm:MainThm} holds for $\mathring{S}_w$ and $\mathring{S}_{ws_i}$, then it holds for $\mathring{S}_{s_iws_i}$ as well.
\end{prop}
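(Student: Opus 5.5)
The plan is to verify the hypothesis of Lemma~\ref{lem:Codim2Cluster} for the seed $\bfs_0=\wt{\bfs}(-\bfw,-i,i)$ on $\mathring{S}_{s_iws_i}$. That is, we show that every $Z_{jk}^S$ and $Z_{kk}^S$ has codimension $\geq2$. The decomposition $\mathring{S}_{s_iws_i}=W\sqcup V$ from Proposition~\ref{prop:WVDecomposition} lets us check this separately on $W$ and on $V$. We use the mutable variable $A_c:=\Delta_{s_iw\omega_i,s_i\omega_i}$, the frozen variable $\Delta_{s_iws_i\omega_i,\omega_i}$, and the mutated variable $A_c'=\Delta_{s_iw\omega_i,\omega_i}$. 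By Lemma~\ref{lem:IndepOfWord} the cluster structure does not depend on the word, so it suffices to work with $\bfs_0$. Local acyclicity and $\CA=\CU$ will then be deduced at the end from the corresponding statements on the two pieces.

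\textbf{Step 1: the open piece $W$.} Via $\pi_W$, the piece $W$ is identified with $\BC^\x\x\mathring{S}_{ws_i}$, and by Proposition~\ref{prop:WVDecomposition} the cluster variables of $\bfs_0$ restrict to those of $\bfs_W$ together with the coordinate on $\BC^\x$. I expect the restriction of $\bfs_0$ to $W$ (where $A_c$ is inverted, i.e.\ $c$ is frozen) to be $\bfs_W$ with one extra frozen vertex. Checking this is a quiver comparison on the explicit seeds. Granting it, Theorem~\ref{thm:MainThm} for $\mathring{S}_{ws_i}$ gives $\BC[W]\simeq \CU(\bfs_0\text{ with } c \text{ frozen})$. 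Corollary~\ref{cor:ClusterCodim2} applied to that full-rank seed then shows that $Z_{jk}^S\cap W$ and $Z_{kk}^S\cap W$ have codimension $\geq2$ in $W$ whenever $j,k\neq c$. The sets $Z_{c,k}^S$ and $Z_{cc}^S$ lie in $V$, so they are handled in Step 2.

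\textbf{Step 2: the closed piece $V$.} Via $\pi_V$, $V\simeq\BC\x\mathring{S}_w$, where the $\BC$-coordinate is $A_c'$ and the remaining variables of $\bfs_0$ (other than $c$) restrict to those of $\bfs_V$, possibly together with one extra variable attached to the last $i$. Since $V$ is a divisor in $\mathring{S}_{s_iws_i}$, it suffices to show that every $Z^S\cap V$ has codimension $\geq1$ in $V$. For $Z_{cc}^S=\{A_c=A_c'=0\}$ this is immediate, because $A_c'$ is a free coordinate on the $\BC$-factor. For $Z_{ck}^S$ with $k\neq c$, the variable $A_k$ restricts to a function on $\mathring{S}_w$ (times a constant) that is not identically zero: by the induction hypothesis it is a cluster or frozen variable of $\bfs_V$, or a nonzero minor. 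For a pair $j,k\neq c$ we use that $Z_{jk}^S\cap V$ has codimension $\geq2$ in $\mathring{S}_w$ by Corollary~\ref{cor:ClusterCodim2} for $\bfs_V$, or at least codimension $\geq1$, which is all we need. Similarly, $Z_{kk}^S$ for $k\neq c$ uses that the exchange relations of $\bfs_0$ restrict to those of $\bfs_V$ up to terms involving $A_c$, which vanishes on $V$.

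\textbf{Conclusion and main obstacle.} Combining Steps 1 and 2 verifies the hypothesis of Lemma~\ref{lem:Codim2Cluster}, which gives $\CU(\bfs_0)\simeq\BC[\mathring{S}_{s_iws_i}]$. Commutativity of the diagram follows because both maps send cluster variables to restricted minors. For local acyclicity, I would cover $\Spec$ by the loci $\{A_c\neq0\}$ and $\{A_c'\neq0\}$, which is a cover because $Z_{cc}^S$ is empty in codimension one. On these loci the seeds are freezings of locally acyclic seeds by induction, so Muller's criterion \cite{Mul13} applies. The main obstacle is the bookkeeping in Step 2: matching the frozen/mutable status and exchange relations of $\bfs_0|_V$ with $\bfs_V$. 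The reason is that freezing $c$ does not commute with setting $A_c=0$, and the two vertices corresponding to the appended $(-i,i)$ fold together in a way that must be checked against equation~\eqref{eq:MatchVariable}.
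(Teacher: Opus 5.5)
Your codimension argument in Steps~1 and~2 is essentially the paper's. On $W$ you transport Corollary~\ref{cor:ClusterCodim2} through $\pi_W$. On $V$ you use that $A_c'$ is the $\BC$-coordinate of $\pi_V$ and that every other variable restricts to a nonzero function on $\BC\times\mathring{S}_w$. For pairs $j,k\neq c$ (including $j=k$), the part on $V$ follows at once from $Z^S_{jk}\cap V\subseteq Z^S_{jc}$, which is what the paper uses. Your appeal to Corollary~\ref{cor:ClusterCodim2} for $\bfs_V$ and to restricted exchange relations is unnecessary. It is also not accurate as stated, since the mutable vertex of $\bfs_0$ at the previous occurrence of $i$ corresponds to a frozen vertex of $\bfs_V$.

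The genuine gap is in the local acyclicity step. Take $A_c$ to be the function cutting out $V$ and $A_c'=\mu_c(A_c)$ the $\BC$-coordinate of $\pi_V$, as in your Step~2. Then the opens $\{A_c\neq0\}$ and $\{A_c'\neq0\}$ do \emph{not} cover $\mathring{S}_{s_iws_i}$.

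\begin{itemize}
\item Codimension $\geq2$ does not make $Z^S_{cc}$ empty. By your own Step~2, $Z^S_{cc}=\pi_V^{-1}(\{0\}\times\mathring{S}_w)\simeq\mathring{S}_w\neq\emptyset$.
\item A cluster variable and its mutation typically do vanish together in codimension two. That is why Corollary~\ref{cor:ClusterCodim2} only asserts codimension $\geq2$.
\end{itemize}

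So Muller's criterion cannot be applied to your pair, and the claims of local acyclicity and $\CA=\CU$ are not established. A telltale sign is that your argument never uses the induction hypothesis for $\mathring{S}_w$ on this side, although the uncovered locus is a copy of $\mathring{S}_w$.

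The missing idea is the paper's sink argument. In $\bfs_0$ all arrows between $c$ and mutable vertices have the same orientation. Hence one of the two monomials in the exchange relation at $c$ involves only frozen variables and is a unit. It follows that $A_c$ and the product $f$ of the mutable cluster variables adjacent to $c$ generate the unit ideal. So $\{A_c\neq0\}\cup\{f\neq0\}$ is a genuine cover by cluster localizations \cite{Mul13}*{Lemma~3.4}.

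\begin{itemize}
\item On $\{A_c\neq0\}$, freezing $c$ gives $\bfs_W$ plus an isolated frozen vertex.
\item On $\{f\neq0\}$, freezing the neighbours of $c$ isolates $c$ and leaves the mutable part of $\bfs_V$.
\end{itemize}

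Both charts are locally acyclic by the two induction hypotheses. A note on labels: the variable cutting out $V$ is $\Delta_{s_iws_i\omega_i,s_i\omega_i}$. Here $f$ is the single variable $\Delta_{s_iw\omega_i,s_i\omega_i}$ at the previous occurrence of $i$. The text's use of the latter name for the former is a slip that your proposal inherited.
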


\begin{proof}
Let $A_k^S$ be the mutable cluster variable $\Delta_{s_iw\omega_i,s_i\omega_i}$ in $\bfs_0$. We first consider the sets $Z_{ij}^S$ and $Z_{jj}^S$ in $\mathring{S}_{s_iws_i}$.

For any $i\neq k\in J_\uf$, the set $Z_{ik}^S$ is contained in $V$. Under $\pi_V$, $Z_{ik}^S$ corresponds to the vanishing set of a cluster variable in $\BC\x \mathring{S}_w$. This shows that $Z_{ik}^S$ has codimension $\geq1$ in $V$, hence codimension $\geq2$ in $\mathring{S}_w$. The same goes for $Z_{kk}^S$, since $\mu_k(A_k^S)$ corresponds to the coordinate of the first component in $\BC\x \mathring{S}_w$.

Meanwhile, for $i,j\neq k$ (allowing $i=j$), we decompose $Z_{ij}^S$ into $Z_{ij}^S\cap W$ and $Z_{ij}^S\cap V$. $Z_{ij}^S\cap V$ is contained in $Z_{ik}^S$ and has codimension $\geq 2$. Under $\pi_W$, $Z_{ij}^S\cap W$ corresponds to a set $Z'_{ij}$ in $\BC^\x\x \mathring{S}_{ws_i}$. Since $\mathring{S}_{ws_i}$ is irreducible and smooth, Corollary~\ref{cor:ClusterCodim2} shows that $Z'_{ij}$ has codimension $\geq2$ in $W$, so $Z_{ij}^S$ has codimension $\geq2$ in $\mathring{S}_w$ as well. By Lemma~\ref{lem:Codim2Cluster}, we have $\CU(\bfs_0)\simeq \mathring{S}_{s_iws_i}$.

For local acyclicity, note that the vertex $k$ for $\Delta_{s_iw\omega_i,s_i\omega_i}$ is a sink in the quiver $\bfs_0$, i.e. it has no arrows pointing to mutable vertices. Freezing $k$ in $\bfs_0$ gives the seed $\bfs_W$ plus an isolated frozen vertex. If we freeze all mutable cluster variables in $\bfs_0$, the mutable part of the resulting seed is the same as that of $\bfs_V$ plus an isolated vertex. In particular, by the induction hypothesis, both freezings give locally acyclic seeds, which have $\CA=\CU$.

Let $f$ be the product of all mutable cluster variables adjacent to $k$. By \cite{Mul13}*{Lemma~3.4}, $A_k$ and $f$ define cluster localizations of $\CU(\bfs_0)$. Since $k$ is a sink, the mutation rule at $k$ shows that $A_k$ and $f$ cannot vanish at the same time. Therefore, $A_k$ and $f$ define a cluster chart on $\mathring{S}_{s_iws_i}$, and $\bfs_0$ is locally acyclic.
\end{proof}

\subsection{Reflection map}
\label{cluster:reflection}

The other ingredient we will use is the reflection map introduced in \cite{SW21}.

Let $w\in W$ and $i\in I$ be such that $s_iw>w$ and $ws_i>w$. The same construction $g':=y_i(t)\dot{s}_i^{-1}g\dot{s}_ix_i(t)$ as in Section~\ref{sec:cluster:reduction} is an isomorphism $ \mathring{S}_{s_iw}\isoto \mathring{S}_{ws_i}$. We will denote this map by $r_i$.

\begin{prop}
Let $\bfw$ be a reduced word for $w$ and consider the seeds $\bfs_1=\wt{\bfs}(-\bfw,i)$ for $\mathring{S}_{s_iw}$ and $\bfs_2=\wt{\bfs}(-\bfw,-i)$ for $\mathring{S}_{ws_i}$. The map $r_i$ matches the images of cluster variables in $\bfs_1$ and $\bfs_2$ in $\BC[\mathring{S}_{s_iw}]$ and $\BC[\mathring{S}_{ws_i}]$ respectively, except for the frozen variables $\Delta_{s_iw\omega_i,\omega_i}$ in $\bfs_1$ and $\Delta_{ws_i,\omega_i,\omega_i}$ in $\bfs_2$, where we instead have
\[
r_i^*(\Delta_{ws_i\omega_i,\omega_i})=\Delta_{s_iw\omega_i,\omega_i}^{-1}\prod_{j\neq i}\Delta_{s_iw\omega_j,\omega_j}^{-a_{ij}}.
\]
\end{prop}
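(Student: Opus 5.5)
The plan is to prove the statement by the same minor computation used for Proposition~\ref{prop:WVDecomposition}, applied to the explicit cluster variables of the two seeds. First we describe these variables. In $\bfj_1=(i,\bfw,i)$ for $\bfs_1$ and $\bfj_2=(-i,\bfw,-i)$ for $\bfs_2$ (after the sign conventions of $(-\bfi^{-1},\bfi)$), the cluster variables restricted to $S$ are, by \eqref{eq:ClusterVariableDefn} and Lemma~\ref{lem:FoldDBCSeed}, minors of the form $\Delta_{v\omega_j,\omega_j}$ or $\Delta_{\omega_j, v^{-1}\omega_j}$, with $v$ a prefix of the relevant reduced word. Since transposition identifies the two halves on $S$, it suffices to treat one half of each seed.

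For $\bfs_2$ the variables are $\Delta_{v\omega_j,\omega_j}$, where $v$ runs over the prefixes $v\le w$, together with $v=ws_i$ for $j=i$. For $\bfs_1$ the corresponding variables are $\Delta_{s_iv\omega_j,s_i\omega_j}$, or equivalently $\Delta_{s_iv\omega_j,\omega_j}$ when $s_i\omega_j=\omega_j$, i.e.\ when $j\ne i$.

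The matching for all non-exceptional variables then follows from \eqref{eq:MatchVariable}:
\[
\Delta_{v\omega_j,\omega_j}(g')=\Delta_{s_iv\omega_j,s_i\omega_j}(g)
\]
whenever $s_iv>v$. This condition holds for every prefix $v$ of $w$, because $s_iw>w$. For $j\ne i$ we have $s_i\omega_j=\omega_j$, so the correspondence is literal. For $j=i$ and $v$ a proper prefix, the variable $\Delta_{s_iv\omega_i,s_i\omega_i}$ is exactly the one produced by the extra letter $i$ at the end of $\bfj_1$. Checking this index bookkeeping carefully, and confirming that it respects the folding involution, is routine.

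The remaining frozen variable $\Delta_{ws_i\omega_i,\omega_i}(g')$ is the main obstacle. Here \eqref{eq:MatchVariable} does not apply, because $ws_i\omega_i$ involves $v=ws_i$ with $s_i$ acting on the wrong side. Instead we use the relation $s_iw\cdot\omega_i$ versus $w\cdot s_i\omega_i$ and the generalized determinantal identity of \cite{FZ99}:
\[
\Delta_{u\omega_i,v\omega_i}\Delta_{us_i\omega_i,vs_i\omega_i}=\Delta_{us_i\omega_i,v\omega_i}\Delta_{u\omega_i,vs_i\omega_i}+\prod_{j\ne i}\Delta_{u\omega_j,v\omega_j}^{-a_{ji}}.
\]
We apply it to $g'$ with $u=w$, $v=e$. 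On $\mathring{S}_{ws_i}$, Bruhat-cell vanishing makes $\Delta_{w\omega_i,s_i\omega_i}(g')=0$: since $g'\in B^+ws_iB^+$, the function $\Delta_{w\omega_i,s_i\omega_i}$ vanishes there for length reasons, and this is the sign and support check we would verify first. The identity then gives
\[
\Delta_{ws_i\omega_i,\omega_i}(g')=\Delta_{w\omega_i,s_i\omega_i}(g')^{-1}\prod_{j\ne i}\Delta_{w\omega_j,\omega_j}(g')^{-a_{ji}}.
\]
If that vanishing is wrong, we would reorganize the identity in whichever way isolates $\Delta_{ws_i\omega_i,\omega_i}(g')$ times a known frozen minor.

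Finally we translate back to $g$ via \eqref{eq:MatchVariable}. This gives $\Delta_{w\omega_j,\omega_j}(g')=\Delta_{s_iw\omega_j,\omega_j}(g)$ for $j\ne i$, and $\Delta_{w\omega_i,s_i\omega_i}(g')=\Delta_{s_iw\omega_i,\omega_i}(g)$, where the $s_i$ on the column side cancels against the conjugation by $\dot s_i$ in the definition of $g'$, up to the sign coming from $\dot s_i^2$. Since $G=SL_n$ is simply laced, $a_{ij}=a_{ji}$, and this yields the stated formula for $r_i^*(\Delta_{ws_i\omega_i,\omega_i})$. The identification $r_i\colon\mathring{S}_{s_iw}\isoto\mathring{S}_{ws_i}$ itself follows as in Proposition~\ref{prop:WVDecomposition}, with $t$ solved rationally.
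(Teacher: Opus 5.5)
Your handling of the non-exceptional variables is fine. Every prefix $v$ of $w$ satisfies $s_iv>v$ because $s_iw>w$, so \eqref{eq:MatchVariable} applies. The gap is in the exceptional frozen variable.

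\textbf{The claimed vanishing is false.} You assert $\Delta_{w\omega_i,s_i\omega_i}(g')=0$ on $\mathring{S}_{ws_i}$, but $g'\in B^+ws_iB^+$ and $w<ws_i$, so the Bruhat cell forces nothing here. Already for $SL_2$ and $w=e$ it fails. Take $g=\begin{pmatrix}a&b\\ b&d\end{pmatrix}\in\mathring{S}_{s}$, so $b\neq 0$. Then $t=a/b$ and $g'=\begin{pmatrix}d&1/b\\ 1/b&a/b^2\end{pmatrix}$, hence $\Delta_{\omega,s\omega}(g')=1/b\neq0$.

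\textbf{The rest of the step does not hold together either.}
\begin{itemize}
\item Your next display divides by the very minor you just declared zero.
\item Even granting the vanishing, the identity evaluated at $g'$ would isolate $\Delta_{ws_i\omega_i,s_i\omega_i}(g')$, not $\Delta_{ws_i\omega_i,\omega_i}(g')$.
\item The translation $\Delta_{w\omega_i,s_i\omega_i}(g')=\pm\Delta_{s_iw\omega_i,\omega_i}(g)$ fails, because $x_i(t)$ does not fix $\dot s_i\eta_{\omega_i}$. One actually gets $-\Delta_{s_iw\omega_i,\omega_i}(g)\pm t\,\Delta_{s_iw\omega_i,s_i\omega_i}(g)$, which in the example equals $1/b$, not $\pm b$.
\end{itemize}
In the example your final formula would read $1/b=b$. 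The hedge ``reorganize the identity in whichever way isolates'' the desired minor does not supply the missing argument.

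\textbf{The missing idea} is to evaluate the Fomin--Zelevinsky identity (with $u=w$, $v=e$) not at $g'$ but at the intermediate element $h:=y_i(t)\dot s_i^{-1}g$. This element lies in $B^+wB^+$ by the choice of $t$. The argument then runs as follows.
\begin{enumerate}
\item Since $g'=h\dot s_ix_i(t)$ and $x_i(t)\eta_{\omega_i}=\eta_{\omega_i}$, you get $\Delta_{ws_i\omega_i,\omega_i}(g')=\Delta_{ws_i\omega_i,s_i\omega_i}(h)$.
\item Since $ws_i>w$ and $h\in B^+wB^+$, Bruhat vanishing gives $\Delta_{ws_i\omega_i,\omega_i}(h)=0$ and $\Delta_{w\omega_i,\omega_i}(h)\neq0$. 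This kills the first term on the right-hand side of the identity and lets you solve for $\Delta_{ws_i\omega_i,s_i\omega_i}(h)$ as $\Delta_{w\omega_i,\omega_i}(h)^{-1}\prod_{j\neq i}\Delta_{w\omega_j,\omega_j}(h)^{-a_{ij}}$.
\item Since $s_iw>w$, the element $\dot w^{-1}y_i(t)\dot w$ lies in $U^-$. Hence $\Delta_{w\omega_j,\omega_j}(h)=\Delta_{s_iw\omega_j,\omega_j}(g)$ for every $j$, which yields exactly the stated formula for $r_i^*(\Delta_{ws_i\omega_i,\omega_i})$.
\end{enumerate}
This is how the paper argues.
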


\begin{proof}
We have
\[
\Delta_{ws_i\omega_i,\omega_i}(r_i(g))=\Delta_{ws_i\omega_i,\omega_i}(y_i(t)\dot{s}_i^{-1}g\dot{s}_ix_i(t))=\Delta_{ws_i\omega_i,s_i\omega_i}(y_i(t)\dot{s}_i^{-1}g).
\]
By \cite{FZ99}*{Theorem~1.17}, we have the identity
\[
\Delta_{w\omega_i,\omega_i}\Delta_{ws_i\omega_i,s_i\omega_i}=\Delta_{ws_i\omega_i,\omega_i}\Delta_{w\omega_i,s_i\omega_i}+\prod_{j\neq i}\Delta_{w\omega_i,\omega_i}^{-a_{ij}}.
\]
Since $y_i(t)\dot{s}_i^{-1}g\in B^+wB^+$, we have
\begin{align*}
\Delta_{ws_i\omega_i,\omega_i}(y_i(t)\dot{s}_i^{-1}g)&=0,\\
\Delta_{w\omega_i,\omega_i}(y_i(t)\dot{s}_i^{-1}g)&\neq0.
\end{align*}
This gives
\begin{align*}
\Delta_{ws_i\omega_i,s_i\omega_i}(r_i(g))&=\Delta_{w\omega_i,\omega_i}(y_i(t)\dot{s}_i^{-1}g)^{-1}\prod_{j\neq i}\Delta_{w\omega_i,\omega_i}^{-a_{ij}}(y_i(t)\dot{s}_i^{-1}g)^{-a_{ij}}\\
&=\Delta_{s_iw\omega_i,\omega_i}(g)^{-1}\prod_{j\neq i}\Delta_{s_iw\omega_i,\omega_i}^{-a_{ij}}(g)^{-a_{ij}}.
\end{align*}

The matching of the other cluster variables is again (\ref{eq:MatchVariable}).
\end{proof}

Comparing with the exchange matrices for $\bfs_1$ and $\bfs_2$, on the cluster side, we have a quasi-cluster transformation $\CU(\bfs_1)\simeq \CU(\bfs_2)$ in the sense of \cite{Fra16}. This gives the following corollary.

\begin{cor}
\label{cor:ClusterReflection}
Let $w\in W$ and $i\in I$ be such that $s_iw>w$ and $ws_i>w$. Then Theorem~\ref{thm:MainThm} holds for $\mathring{S}_{s_iw}$ if and only if it holds for $\mathring{S}_{ws_i}$.
\end{cor}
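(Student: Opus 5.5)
Since $r_i$ is an isomorphism $\mathring{S}_{s_iw}\isoto\mathring{S}_{ws_i}$, I would transport the cluster structure through it. By symmetry (or using $r_i^{-1}$), it suffices to show that if Theorem~\ref{thm:MainThm} holds for $\mathring{S}_{ws_i}$ with seed $\bfs_2$, then it holds for $\mathring{S}_{s_iw}$ with seed $\bfs_1$. Lemma~\ref{lem:IndepOfWord} lets us use these particular words, since any admissible $(u,v,\bfi)$ yields a mutation equivalent seed.

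\textbf{Step 1: compare the two seeds.} The seeds $\bfs_1=\wt{\bfs}(-\bfw,i)$ and $\bfs_2=\wt{\bfs}(-\bfw,-i)$ differ only by the sign of the last letter. Unfolded, the words are $(-i,\bfw,i)$ versus $(i,\bfw,-i)$ up to reordering. I would check from the exchange-matrix formula in Section~\ref{sec:prelim:dbc}, after folding, three things. First, the mutable parts of $\bfs_1$ and $\bfs_2$ coincide under the bijection of vertices matched by the preceding proposition. Second, the only change is at the frozen vertex carrying $\Delta_{s_iw\omega_i,\omega_i}$, respectively $\Delta_{ws_i\omega_i,\omega_i}$. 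Third, the rows of mutable vertices into this frozen vertex transform so that the monomial substitution
\[
A_{\mathrm{fr}}\mapsto A_{\mathrm{fr}}^{-1}\prod_{j\neq i}A_j^{-a_{ij}},
\]
with all other variables fixed, preserves every exchange ratio $\hat y_k$ for $k$ mutable. This is exactly a quasi-cluster transformation in the sense of \cite{Fra16}. It therefore induces isomorphisms $\LP(\mu(\bfs_2))\isoto\LP(\mu(\bfs_1))$ compatible with all mutations, and hence an isomorphism $\Psi:\CU(\bfs_2)\isoto\CU(\bfs_1)$.

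\textbf{Step 2: match the geometry.} By the preceding proposition, $r_i^*$ sends each cluster variable of $\bfs_2$, restricted to $\mathring{S}_{ws_i}$, to the corresponding function on $\mathring{S}_{s_iw}$ given by $\Psi$. The one frozen exception is governed by the displayed formula, which is the same monomial as in Step 1. So the square formed by $\Psi$, $r_i^*$, and the two maps from the upper cluster algebras to coordinate rings commutes. Given the isomorphism $\CU(\bfs_2)\isoto\BC[\mathring{S}_{ws_i}]$, composing yields $\CU(\bfs_1)\isoto\BC[\mathring{S}_{s_iw}]$ sending cluster variables to the restricted minors.

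\textbf{Step 3: the commutative diagram with $G^{w,w^{-1}}$.} The vertical map to $\BC[\mathring{S}_{s_iw}]$ is restriction of minors, and the map $\pi$ sends $A_k\mapsto A_{\overline k}$. Both send the cluster variables of $\bfs(\bfj)$ to the same functions, namely the $A^S_k$. Since $\CU(\bfs(\bfj))=\CA(\bfs(\bfj))$ is generated by cluster variables together with inverses of frozen ones, commutativity follows. This requires checking that the isomorphism built in Step 2 agrees with the $A_k\mapsto A_k^S$ description.

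\textbf{Step 4: local acyclicity.} It transfers along $\Psi$. A quasi-cluster transformation preserves the mutable quiver, so it carries acyclic cluster localizations covering $\Spec\CU(\bfs_2)$ to acyclic cluster localizations covering $\Spec\CU(\bfs_1)$. The frozen variables only change by monomials in invertible frozen variables, so Muller's cover descends and $\CA=\CU$ follows.

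The main obstacle is Step 1. One must verify explicitly, after folding with the $1/2$-symmetrizers at the glued vertices $(j,m_j)$, that the exponent $-a_{ij}$ in the geometric formula matches the change in $\ve$ at the frozen vertex. The delicate point is the arrows between the vertices $(j,m_j)$, which were defined by the gluing rule in Lemma~\ref{lem:FoldDBCSeed}. I would first do this in the unfolded seeds $\bfs(\bfj)$, where it is the standard sign-flip move for double Bruhat cells. I would then check that the folding of Lemma~\ref{lem:FoldQuiverMutation} is compatible with the move, since the affected vertex has a trivial $\sigma$-orbit.
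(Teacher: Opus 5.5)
Your proposal is correct and follows the paper's argument: the paper also derives the corollary from the preceding proposition on $r_i$ together with the observation that $\bfs_1$ and $\bfs_2$ are related by a quasi-cluster transformation in the sense of \cite{Fra16}, and your Steps 2--4 spell out the transfer of the isomorphism, the diagram with $G^{w,w^{-1}}$ and local acyclicity, which the paper leaves implicit.

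Some of the details you give in Step 1 need correcting:
\begin{itemize}
\item The unfolded words are $(-i,\bfw^{-1},-\bfw,i)$ and $(i,\bfw^{-1},-\bfw,-i)$, so it is not true that the only change is at the frozen vertex. The column of the frozen orbit $\{(i,0),(i,2m_i)\}$ changes. So do the entries from the mutable orbit $\{(i,1),(i,2m_i-1)\}$ to the frozen orbits $\{(j,0),(j,2m_j)\}$ with $a_{ij}\neq 0$. This second change is exactly what your $\hat y$-preservation condition forces.
\item That frozen orbit has size two, not one. Also, the symmetrizer $1/2$ sits on the non-fixed orbits, not on the glued vertices $(j,m_j)$. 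So compatibility with folding cannot come from a trivial orbit, nor from Lemma~\ref{lem:FoldQuiverMutation}, which concerns mutations. It must be checked directly on the folded seeds, or deduced from the $\sigma$-equivariance of the unfolded move.
\end{itemize}
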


\subsection{Proof of Theorem~\ref{thm:MainThm}}
\label{sec:cluster:proof}

\begin{lem}
\label{lem:BaseCase}
Let $w\in W$ be such that each simple reflection appears at most once in $w$. Then Theorem~\ref{thm:MainThm} holds for $\mathring{S}_w$.
\end{lem}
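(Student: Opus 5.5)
Since each simple reflection occurs at most once in $w$, we take $u=w$, $v=e$ and $\bfi=-\bfw$ for a reduced word $\bfw$ of $w$. We then have $m_i\le 1$ for every $i$, so in $\bfj=(\bfw,-\bfw^{-1})$ each index appears at most twice. In $\bfs(\bfj)$ the only mutable vertices are the middle vertices $(i,m_i)=(i,1)$ for $i\in\operatorname{supp}(w)$. These are fixed by the involution $\sigma$. Hence the folded seed $\wt{\bfs}(\bfi)$ has the following shape. Its mutable part is the full subquiver on $\{(i,1)\}$, with the exchange matrix described in the proof of Lemma~\ref{lem:FoldDBCSeed}: the entry between $(i,1)$ and $(j,1)$ is $\pm a_{ij}$, with sign determined by the relative order of $i,j$ in $\bfw$. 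Its frozen part consists of the orbits $\{(i,0),(i,2)\}$. In particular the mutable part is an orientation of a subgraph of the Dynkin diagram, a forest, so $\wt{\bfs}(\bfi)$ is acyclic. It also has full rank by Lemma~\ref{lem:FullRankFolding}. By Theorem~\ref{thm:AcyclicSeed}, $\CA=\CU$ and $\CA$ is generated by the initial and once-mutated cluster variables together with the inverses of the frozen variables. This settles local acyclicity.

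For the isomorphism $\CU(\wt{\bfs}(\bfi))\isoto\BC[\mathring{S}_w]$ we use Lemma~\ref{lem:Codim2Cluster}. It then suffices to show that $Z^S_{ij}=\{A^S_{(i,1)}=A^S_{(j,1)}=0\}$ for $i\ne j$ and $Z^S_{kk}$ have codimension $\ge2$ in $\mathring{S}_w$. We aim to prove this by describing $\mathring{S}_w$ explicitly. First, $\dim\mathring{S}_w=\ell(w)+r$, and $\wt{\bfs}(\bfi)$ has $\ell(w)$ mutable and $r$ frozen vertices. Next, we use the Bruhat decomposition to write $g\in\mathring{S}_w$ as $g=b\dot w b'$, and use $g=g^T$ together with the fact that $w$ is a product of distinct simple reflections. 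The goal is a parametrization of $\mathring{S}_w$ by a torus $T$-factor (giving the frozen variables, the orbit minors $\Delta_{\omega_i,w^{-1}\omega_i}$) times coordinates $x_i$, one for each $i\in\operatorname{supp}(w)$, with $A^S_{(i,1)}$ essentially the coordinate $x_i$ up to a frozen monomial. The exchange relations then read $x_i x_i'=\prod_j x_j^{[\cdot]_+}\cdot(\text{frozen})+\prod_j x_j^{[\cdot]_-}\cdot(\text{frozen})$. Comparing with the $SL_2$ example $\BC[x,y,z^{\pm1}]/(xy=1+z^2)$, we expect $\BC[\mathring{S}_w]$ to be generated by $x_i,x_i'$ and frozen variables subject only to these relations. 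That is, $\mathring{S}_w$ should be the acyclic cluster variety itself, a complete intersection of dimension $\ell(w)+r$ in $\BC^{2\ell(w)}\times T$.

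Given this presentation, the codimension estimates follow directly. On $Z^S_{kk}$ we have $x_kx_k'=0$ with both factors zero, so the two monomials on the right sum to zero. Since the frozen variables are invertible, this is a nontrivial equation in the remaining variables, which adds a condition beyond $x_k=x_k'=0$ and gives codimension $2$. For $Z^S_{ij}$ with $i\neq j$, the two conditions $x_i=x_j=0$ are independent coordinates in the parametrization, so codimension $2$ is immediate. Alternatively, once the presentation is known, the isomorphism with $\CA$ follows from the standard description of acyclic cluster algebras via generators and relations in \cite{BFZ05}, and we do not need Lemma~\ref{lem:Codim2Cluster} at all.

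The main obstacle is the middle step: proving that the restrictions of the generalized minors to $\mathring{S}_w$ really give this presentation, i.e., that the natural map from the acyclic cluster algebra to $\BC[\mathring{S}_w]$ is surjective and injective. We would do this by induction on $\ell(w)$, using the reflection maps $r_i$ of Corollary~\ref{cor:ClusterReflection} to move a sink or source letter to the end of $\bfw$. Removing it then relates $\mathring{S}_w$ to $\mathring{S}_{ws_i}$ via a map $g\mapsto g'$ as in Proposition~\ref{prop:WVDecomposition}. This map should exhibit $\mathring{S}_w$ as the hypersurface $x_ix_i'=\text{(binomial)}$ over $\mathring{S}_{ws_i}\times\BC^\times$. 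The base case $w=e$ is $\mathring{S}_e=S\cap B^+=T\cap S$, on which all variables are frozen.
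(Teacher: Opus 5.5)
Your first paragraph is sound and matches the paper. The mutable vertices of $\bfs(\bfj)$ are exactly the $\sigma$-fixed vertices $(i,1)$, $i\in\operatorname{supp}(w)$. So $\wt{\bfs}(\bfi)$ has a Dynkin-forest mutable part, and Lemma~\ref{lem:FullRankFolding} with Theorem~\ref{thm:AcyclicSeed} gives acyclicity and $\CA=\CU$. Two small slips: $\bfj=(-\bfi^{-1},\bfi)$ is $(w_\ell,\dots,w_1,-w_1,\dots,-w_\ell)$, not $(\bfw,-\bfw^{-1})$; and there are also singleton frozen orbits $\{(i,0)\}$ for $i\notin\operatorname{supp}(w)$.

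The genuine gap is the isomorphism $\CU(\wt{\bfs}(\bfi))\isoto\BC[\mathring{S}_w]$ itself. Your codimension estimates are derived from a presentation of $\BC[\mathring{S}_w]$ by the exchange relations. As you note, that presentation together with the BFZ description of acyclic cluster algebras is equivalent to the statement being proved, so the whole content sits in the ``main obstacle'' paragraph, which is a heuristic rather than a proof. The induction sketched there also has no tool behind it:
\begin{itemize}
\item The map $g\mapsto g'=y_i(t)\dot{s}_i^{-1}g\dot{s}_ix_i(t)$ of Proposition~\ref{prop:WVDecomposition} is only defined on $\mathring{S}_{s_iw's_i}$ with $\ell(s_iw's_i)=\ell(w')+2$. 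That requires $s_i$ to occur twice, which never happens here.
\item The reflection map $r_i$ only identifies $\mathring{S}_{s_iw'}\cong\mathring{S}_{w's_i}$ at equal length.
\item Nothing in the paper relates $\mathring{S}_w$ to $\mathring{S}_{ws_i}$ when $ws_i<w$. Moreover the frozen variables, and the exchange binomials at neighbours of $i$, change when $s_i$ is added, so no naive fibration exists.
\item As stated, your dimension count also fails: a hypersurface in $\mathring{S}_{ws_i}\times\BC^\times\times\BC^2$ has dimension $\ell(w)+r+1$, not $\ell(w)+r$.
\end{itemize}

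The missing idea is to work upstairs on $G^{w,w^{-1}}$, where the cluster structure is already known. The unfolded seed $\bfs(\bfj)$ is itself acyclic, with the same Dynkin forest as mutable part. So by Corollary~\ref{cor:FoldAcyclic} the folding map $\pi:\CA(\bfs(\bfj))\to\CA(\wt{\bfs}(\bfi))$ is surjective: initial and adjacent cluster variables go to initial and adjacent ones. Proposition~\ref{prop:UpperCAKernel} identifies $\ker\pi$ with the ideal of the closed subvariety $\mathring{S}_w\subset G^{w,w^{-1}}$ under $\CA(\bfs(\bfj))=\CU(\bfs(\bfj))\cong\BC[G^{w,w^{-1}}]$. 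Hence $\CU(\wt{\bfs}(\bfi))=\CA(\wt{\bfs}(\bfi))\cong\BC[\mathring{S}_w]$, and the square in Theorem~\ref{thm:MainThm} commutes by construction. Equivalently, since every mutable vertex is $\sigma$-fixed, folding only identifies the frozen variables $A_{(i,0)}$ and $A_{(i,2)}$. The presentation you wanted then comes by base change from the BFZ presentation of $\BC[G^{w,w^{-1}}]$, with no induction and no codimension estimate needed.
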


\begin{proof}
By construction, the mutable part of the quiver for $\bfs(\bfj)$ for $G^{w,w^{-1}}$ is obtained by adding an orientation to a subgraph of the Dynkin diagram of $G$, so $\bfs(\bfj)$ is acyclic and we have a surjection $\CA(\bfs(\bfj))\to \CA(\wt{\bfs}(\bfi))$ by Corollary~\ref{cor:FoldAcyclic}. The quiver for $\wt{\bfs}(\bfi)$ is also acyclic, so we have $\CA(\wt{\bfs}(\bfi))=\CU(\wt{\bfs}(\bfi))$ by \cite{BFZ05}*{Theorem~1.18}.

By Proposition~\ref{prop:UpperCAKernel}, the kernel of the surjection
\[
\CU(\bfs(\bfj))=\CA(\bfs(\bfj))\to\CA(\wt{\bfs}(\bfi))=\CU(\wt{\bfs}(\bfi))
\]
is identified with the kernel of $\BC[G^{w,w^{-1}}]\to \BC[\mathring{S}_w]$, so we conclude that $\CU(\wt{\bfs}(\bfi))\simeq \BC[\mathring{S}_w]$.
\end{proof}

Note that in this case, the folding is simply changing the frozen part of the seed, so we can also prove Lemma~\ref{lem:BaseCase} by base change.

\begin{proof}[Proof of Theorem~\ref{thm:MainThm}]
Consider the conjugacy classes $C_w=\{vwv^{-1}|v\in W\}$ in $W$. By \cite{GP93}*{Theorem~1.1}, any $w\in W$ can be reduced to an element $w'\in C_w$ with minimal length via cyclic shifts $x\mapsto x'=s_ixs_i$ where $\ell(x')\leq \ell(x)$. Combining Proposition~\ref{prop:InductionStep} and Corollary~\ref{cor:ClusterReflection}, we reduce to proving the theorem for all minimal elements in the conjugacy classes of $W$.

Each conjugacy class in $W\simeq \mathring{S}_n$ contains an element of the form
\[
w_{\min} = (1,\cdots,k_1)(k_1+1,\cdots ,k_2)\cdots(k_{l-1}+1,\cdots,k_l)
\]
for $1\leq k_1<k_2\cdots <k_l=n$ in disjoint cycle notation. We see that each simple reflection appears at most once in $w_{\min}$, which also implies $w_{\min}$ is minimal length. By Lemma~\ref{lem:BaseCase}, the result follows.
\end{proof}

\section{Cluster Poisson structures}
\label{sec:poisson}

In this section, we show that the cluster structures on $\mathring{S}_w$ constructed in Section~\ref{sec:cluster} are compatible with the Poisson structure in \cite{Lu14}.

\subsection{Poisson structures on $G$}
\label{sec:poisson:group}

We first recall the standard and twisted Poisson structures on $G$, following \citelist{\cite{EL07}\cite{Lu14}}.

For any $x\in \fg$, let $x^L$ and $x^R$ be respectively the left and right invariant vector fields on $G$ with value $x$ at identity of $G$.

The standard Poisson structure $\pi_{\st}$ endows $G$ with a Poisson--Lie group structure. It is given by the bivector field
\[
\pi_\st=\frac{1}{2}\sum_{\alpha\in \Phi^+}(E_\alpha^R\wedge E_{-\alpha}^R-E_\alpha^L\wedge E_{-\alpha}^L).
\]

Meanwhile, for any automorphism $\theta$ of $G$, there is a Poisson structure $\pi_\theta$ on $G$ such that each $\theta$-twisted conjugacy class is a Poisson submanifold. When $\theta(B^+)=B^+$ and $\theta(T)=T$, an explicit formula for $\pi_\theta$ is given in \cite{Lu14}*{Lemma~2.4}. Let $y_i$, $i\in I$ be a basis for $\fh$ such that $( y_i,y_j) =\frac{1}{2}\delta_{ij}$. The Poisson structure $\pi_\theta$ is given by the bivector field
\[
\pi_\theta=\sum_{i\in I}\theta(y_i)^L\wedge y_i^R+\sum_{\alpha\in \Phi^+} \theta(E_{-\alpha})^L\wedge E_\alpha^R+\frac{1}{2}\sum_{\alpha\in \Phi^+}(E_\alpha^R\wedge E_{-\alpha}^R+E_\alpha^L\wedge E_{-\alpha}^L).
\]

When $\theta$ is arbitrary, one reduces to the $\theta$-stable case by applying an extra conjugation. Let $g_0\in G$ be such that $\theta'=\Ad_{g_0}\circ\theta$ has $\theta'(B^+)=B^+$ and $\theta'(T)=T$. The Poisson structure $\pi_\theta$ is then defined by $(r_{g_0})_*\pi_{\theta'}$, where $r_{g_0}$ is right multiplication by $g_0$.

We apply these results to $G=SL_n$ and $\theta(g)=g^{-T}$. In this case, $\theta'(g)=\dot{w}_0g^{-T}\dot{w}_0^{-1}$ stabilizes $B^+$ and $T$. In fact $\theta'$ is the Dynkin involution of $G$ with respect to the pinning. Using $\pi_{\theta'}$, we compute that
\begin{equation}
\label{eq:TwistedPoisson}
\pi_\theta=\sum_{i\in I}y_i^L\wedge y_i^R-\sum_{\alpha\in \Phi^+} E_\alpha^L\wedge E_\alpha^R+\pi_\st.
\end{equation}

\subsection{Cluster Poisson structures}
\label{sec:poisson:cluster}

We then recall compatible Poisson structures on cluster algebras following \citelist{\cite{BZ05}\cite{GSV10}}.

Let $\bfs_0$ be a seed. A Poisson bracket $\{-,-\}$ on $\CA=\CA(\bfs_0)$ is said to be \textit{compatible} with the cluster structure if for any seed $\bfs\sim \bfs_0$, the cluster variables $\{A_{j,\bfs}\}_{j\in J}$ are log canonical, i.e.
\[
\{A_{i,\bfs},A_{j,\bfs}\}=\Omega_{ij,\bfs}A_{i,\bfs}A_{j,\bfs}
\]
for some constants $\Omega_{ij,\bfs}$. The matrix $(\Omega_{ij,\bfs})$ is skew-symmetric.

The compatibility condition can be checked in a single seed, where one needs to verify the cluster variables are log canonical, and for any $j\in J_\uf$, $i\in J$,
\begin{equation}
\label{eq:CompatibleEqn}
\sum_{k\in J}\ve_{kj}\Omega_{ki}=\delta_{ij}p_j
\end{equation}
for some non-zero $p_j$.

Using (\ref{eq:CompatibleEqn}), one can then prove that for any $k\in J_\uf$, the cluster variables in $\mu_k(\bfs)$ are compatible, and the same equation holds for $\Omega_{ij,\mu_k(\bfs)}$ and $\ve_{ij,\mu_k(\bfs)}$, thereby recovering compatibility for all seeds.

\subsection{Compatibility for $\mathring{S}_w$}
\label{sec:poisson:compatible}

The standard Poisson structure on $G$ is shown to be compatible with the cluster structure in \cite{GSV10}. The key computation is summarized in Corollary 4.21, which shows that for the cluster variables $A_k=\Delta(k;\bfi)$, if $j<k$, then
\begin{equation}
\label{eq:StdBracket}
\{A_j,A_k\}_\st=\frac{( u_{\leq j}\omega_{|i_j|},u_{\leq k}\omega_{|i_k|})-( v_{> j}\omega_{|i_j|},v_{> k}\omega_{|i_k|})}{2}A_jA_k.    
\end{equation}
It is then shown that the coefficients $\Omega_{jk}$ are compatible with the exchange matrix.

We now turn to $\mathring{S}_w$. The cluster variables in our seeds defined in Section~\ref{sec:cluster:seed} are all obtained by restricting generalized minors on $G$. Since $S$ is a Poisson submanifold of $(G,\pi_\theta)$, we can compute their Poisson bracket $\{-,-\}_S$ by restricting the Poisson bracket $\{-,-\}_\theta$ on $G$.

Let $\bfi$ be a reduced word for $w$ and consider the seed $\wt{\bfs}(-\bfi)$ for $\mathring{S}_w$. The cluster variables are of the form $\Delta_{u\omega_i,\omega_i}$ for $u\in W$, parametrized by $(i,l)$ with $l\geq m_i$.

\begin{lem}
\label{lem:PoissonBracketS}
Let $A_j,A_k$ be cluster variables in $\wt{\bfs}(-\bfi)$. Then
\[
\{A_j,A_k\}_S=(\Omega_{jk}+\Omega_{\sigma(j),k})A_jA_k.
\]
\end{lem}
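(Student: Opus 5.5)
We compute $\{A_j,A_k\}_\theta$ on $G$ using (\ref{eq:TwistedPoisson}) and then restrict to $S$. Write $\pi_\theta=\pi_\st+\pi'$, where
\[
\pi'=\sum_{i\in I}y_i^L\wedge y_i^R-\sum_{\alpha\in\Phi^+}E_\alpha^L\wedge E_\alpha^R .
\]
The cluster variables of $\wt{\bfs}(-\bfi)$ are the restrictions of the minors $\Delta_{u\omega_i,\omega_i}$ indexed by $(i,l)$ with $l\geq m_i$. These are cluster variables of $\bfs(\bfj)$, and their bracket under $\pi_\st$ is given by (\ref{eq:StdBracket}), namely $\Omega_{jk}A_jA_k$. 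It therefore remains to show that the extra term contributes
\[
\pi'(dA_j,dA_k)=\Omega_{\sigma(j),k}A_jA_k \quad\text{on } S.
\]

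\textbf{Step 1: rewrite one factor via the transpose.} On $S$ we have $g=g^T$. By Lemma~\ref{lem:FoldDBCSeed}, $A_j(g)=A_{\sigma(j)}(g^T)$, so $A_j$ and $A_{\sigma(j)}$ agree on $S$. The plan is to convert the left‑invariant derivatives of $A_j$ into right‑invariant derivatives of $A_{\sigma(j)}$. Let $f^T(g)=f(g^T)$. Then
\[
(x^Lf)(g)=\tfrac{d}{dt}f(g\exp(tx))=\tfrac{d}{dt}f^T(\exp(tx^T)g^T),
\]
so $x^L A_j(g)=(x^T)^R A_{\sigma(j)}(g^T)$. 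On $S$ this equals $(x^T)^R A_{\sigma(j)}(g)$.

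With the chosen pinning, transpose sends $E_\alpha\mapsto E_{-\alpha}$ up to sign. Because $\theta(x_i(t))=y_i(-t)$, transpose sends $x_i(t)\mapsto y_i(t)$, so the sign is $+1$ on simple roots; it must be tracked for non‑simple roots in the Chevalley basis. Transpose fixes $\fh$ pointwise. Hence
\[
\sum_i y_i^L A_j\, y_i^R A_k=\sum_i y_i^R A_{\sigma(j)}\, y_i^R A_k ,
\]
\[
-\sum_{\alpha>0} E_\alpha^L A_j\, E_\alpha^R A_k=-\sum_{\alpha>0} E_{-\alpha}^R A_{\sigma(j)}\, E_\alpha^R A_k ,
\]
all evaluated on $S$.

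\textbf{Step 2: identify with a standard bracket.} The right‑hand sides assemble into the right‑invariant half of a bracket between $A_{\sigma(j)}$ and $A_k$ of the same shape as the $\pi_\st$ computation in \cite{GSV10}. One must be careful about which half it is. The index $\sigma(j)$ has $l\leq m_i$, so $A_{\sigma(j)}=\Delta_{\omega_i,u\omega_i}$ is a minor whose column weight varies, while $A_k=\Delta_{u'\omega_{i'},\omega_{i'}}$ has row weight varying and column weight fixed at $\omega_{i'}$.

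\emph{Torus part.} Since $A_k$ is a weight vector for the right $T$‑action, $y_i^R A_k=\langle u'\omega_{i'},y_i\rangle A_k$ for the appropriate index, and similarly $y_i^RA_{\sigma(j)}$ is a multiple of $A_{\sigma(j)}$. Summing gives $\frac12(u\omega_i,u'\omega_{i'})A_{\sigma(j)}A_k$, which matches the first term of (\ref{eq:StdBracket}) after accounting for ordering and sign.

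\emph{Root part.} Minors of the form $\Delta_{\gamma,\omega}$ are killed by $E_\alpha^R$ for $\alpha>0$ when the column weight is the highest weight $\omega$. Hence $E_\alpha^R A_k=0$ (with $A_k=\Delta_{u'\omega,\omega}$ as above), so $E_{-\alpha}^RA_{\sigma(j)}\,E_\alpha^RA_k=0$ and the root sum vanishes. The second term of (\ref{eq:StdBracket}) is $(v_{>}\omega,v_{>}\omega')$ with $v$ trivial on one side. I expect it to combine with the torus contribution to yield exactly $\Omega_{\sigma(j),k}$, using $\sigma(j)<k$ in the order on $J$.

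\textbf{Main obstacle.} The hard part is the sign and ordering bookkeeping. This includes checking that $\sigma(j)<k$ (or handling equality when $j=\sigma(j)$ is the glued vertex $(i,m_i)$), and verifying the normalization $(y_i,y_j)=\frac12\delta_{ij}$ against the $\frac12$ in (\ref{eq:StdBracket}). It also requires confirming that the leftover terms of $\pi_\st$ in the left‑invariant part reproduce $\Omega_{jk}$ exactly rather than a mixture. If the direct vanishing argument for $E_\alpha^RA_k$ fails because of the $v_{>k}$ convention, I would instead compute both sides on the torus $\BT$ of $\bfs(\bfj)$ using log‑canonicity and the transpose symmetry of Lemma~\ref{lem:FoldDBCSeed}.
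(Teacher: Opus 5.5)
Your decomposition $\pi_\theta=\pi_\st+\pi'$ and the identification of the $\pi_\st$-part with $\Omega_{jk}A_jA_k$ are fine. The evaluation of $\pi'(dA_j,dA_k)$, however, is the whole content of the lemma, and it is incomplete and partly wrong. Write $A_j=\Delta_{u\omega_a,\omega_a}$ and $A_k=\Delta_{u'\omega_b,\omega_b}$. There are three problems.

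\textbf{Missing halves of the wedges.} You only compute one half of each wedge, but $(X\wedge Y)(dA_j,dA_k)=X(A_j)Y(A_k)-Y(A_j)X(A_k)$. The missing torus half is $-\sum_c y_c^RA_j\,y_c^LA_k=-\tfrac12(u\omega_a,\omega_b)A_jA_k$. This is exactly the second term of (\ref{eq:StdBracket}) that you leave as ``I expect it to combine''.

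\textbf{False vanishing claim in the root part.} Since $x^Rf(g)=\frac{d}{dt}f(\exp(tx)g)$ differentiates along \emph{left} multiplication, $E_\alpha^R$ sees the row weight, and it does not kill $\Delta_{u'\omega,\omega}$ in general. For $\bfi=(1,2)$ in $SL_3$, $\Delta_{s_1\omega_1,\omega_1}=\pm g_{21}$ is a cluster variable of $\wt{\bfs}(-\bfi)$, and $E_{\alpha_2}^R$ sends it to $\pm g_{31}\neq 0$. What is true is that $E_\alpha^L\Delta_{\gamma,\omega}=0$ for $\alpha>0$, by right $U^+$-invariance, since $U^+$ fixes $\eta_\omega$. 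In your transposed form the product does vanish, but because $E_{-\alpha}^RA_{\sigma(j)}=0$, not because of $A_k$.

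\textbf{Wrong torus weight.} For the same reason, the torus weight of $A_{\sigma(j)}=\Delta_{\omega_a,u\omega_a}$ is wrong: $y^RA_{\sigma(j)}=\langle\omega_a,y\rangle A_{\sigma(j)}$. So your half equals $\frac12(\omega_a,u'\omega_b)A_jA_k$, not $\frac12(u\omega_a,u'\omega_b)A_jA_k$; the latter matches no term of (\ref{eq:StdBracket}).

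The repair is the paper's argument, and it needs no transposition: the identity already holds on all of $G$ before restricting to $S$. Use $y^L\Delta_{u\omega,\omega}=\langle\omega,y\rangle\Delta_{u\omega,\omega}$, $y^R\Delta_{u\omega,\omega}=\langle u\omega,y\rangle\Delta_{u\omega,\omega}$ and $E_\alpha^L\Delta_{u\omega,\omega}=0$. Then both halves of every $E_\alpha^L\wedge E_\alpha^R$ term vanish. With $(y_c,y_{c'})=\frac12\delta_{cc'}$, the torus terms give $\pi'(dA_j,dA_k)=\frac12\bigl[(\omega_a,u'\omega_b)-(u\omega_a,\omega_b)\bigr]A_jA_k$. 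By (\ref{eq:StdBracket}) this is $\Omega_{\sigma(j),k}A_jA_k$, because $A_{\sigma(j)}=\Delta_{\omega_a,u\omega_a}$ has $u_{\leq\sigma(j)}=e$ and $v_{>\sigma(j)}\omega_a=u\omega_a$, while $v_{>k}=e$. The involution $\sigma$ enters only in this final recognition.

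As for your ordering worry: $\sigma(j)<k$ unless $k$ is a glued vertex $(b,m_b)$ preceding $\sigma(j)$. In that case the letter $b$ does not occur in the prefix of $\bfi$ defining $u$, so $(u\omega_a,\omega_b)=(\omega_a,\omega_b)$ and $u'=e$, and both sides are $0$.
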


\begin{proof}
Let $\langle-,-\rangle$ denote the pairing between $\fh^*$ and $\fh$. We compute directly
\begin{align*}
y_i^L(\Delta_{u\omega_j,\omega_j})(g)&=\langle\omega_j,y_i\rangle,\\
y_i^R(\Delta_{u\omega_j,\omega_j})(g)&=\langle u\omega_j,y_i\rangle,
\end{align*}
and for any $\alpha\in \Phi^+$,
\[
E_\alpha^L(\Delta_{u\omega_j,\omega_j})=0.
\]
Therefore, as functions on $G$,
\begin{align*}
\{A_j,A_k\}_\theta&=\{A_j,A_k\}_\st+\sum_{i\in I}(\langle\omega_{|i_j|},y_i\rangle\langle u_{\leq k}\omega_{|i_k|},y_i\rangle - \langle u_{\leq j}\omega_{|i_j|},y_i\rangle \langle \omega_{|i_k|},y_i\rangle )A_jA_k\\
&=\{A_j,A_k\}_\st+\frac{( \omega_{|i_j|}, u_{\leq k}\omega_{|i_k|})-( u_{\leq j}\omega_{|i_j|},\omega_{|i_k|})}{2}A_jA_k
\end{align*}
Note that the coefficient for the second term is the Poisson bracket $\{A_{\sigma(j)},A_k\}_\st$, so
\[
\{A_j,A_k\}_\theta=(\Omega_{jk}+\Omega_{\sigma(j),k})A_jA_k,
\]
and the result follows by restriction to $S$.
\end{proof}

Now we turn to folding of seeds with compatible Poisson structures. Let $\bfs$ be admissible and suppose the matrix $\Omega=(\Omega_{ij})_{i,j\in J}$ is compatible with $(\ve_{ij})$. Same as the exchange matrix, we impose the admissibility condition $\Omega_{ij}=\Omega_{\sigma(i),\sigma(j)}$ for any $i,j\in J$ and $\sigma\in \Gamma$. Note that this implies $p_j=p_{\sigma(j)}$ for any $j\in J$ and $\sigma\in \Gamma$.

Define $\Omega^\Gamma$ by
\[
\Omega^\Gamma_{\overline{i}\overline{j}}=\sum_{\sigma\in \Gamma}\Omega_{\sigma(i),j}=\sum_{\sigma\in \Gamma}\Omega_{i,\sigma(j)}.
\]
$\Omega^\Gamma$ is again independent of the choice of $j$, and it is skew-symmetric.

\begin{lem}
\label{lem:FoldCompatible}
$\Omega^\Gamma$ is compatible with $(\ve^\Gamma_{\overline{i}\overline{j}})$ in the folded seed $\bfs^\Gamma$.
\end{lem}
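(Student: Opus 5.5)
We need to verify the compatibility equation \eqref{eq:CompatibleEqn} for the folded data: for each mutable orbit $\overline{j}$ and each orbit $\overline{i}$,
\[
\sum_{\overline{k}\in\overline{J}}\ve^\Gamma_{\overline{k}\,\overline{j}}\,\Omega^\Gamma_{\overline{k}\,\overline{i}}=\delta_{\overline{i}\,\overline{j}}\,p^\Gamma_{\overline{j}}
\]
for some nonzero $p^\Gamma_{\overline{j}}$. Skew-symmetry of $\Omega^\Gamma$ is already noted, so this identity is all that is needed. (If one also wants log-canonicity of the folded variables, that is the statement that the bracket restricted to the $\Gamma$-fixed locus has the form $\Omega^\Gamma$, as in Lemma~\ref{lem:PoissonBracketS}; I treat the lemma as the matrix identity.)

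The plan is to expand both folded matrices back into sums over $J$ and reduce to the unfolded identity. Fix representatives $j\in\overline{j}$ and $i\in\overline{i}$. By definition, $\ve^\Gamma_{\overline{k}\,\overline{j}}=\sum_{k'\in\overline{k}}\ve_{k'j}$. Using $\Omega^\Gamma_{\overline{k}\,\overline{i}}=\sum_{\sigma\in\Gamma}\Omega_{k',\sigma(i)}$, which is valid for any $k'\in\overline{k}$, I would write
\[
\sum_{\overline{k}}\ve^\Gamma_{\overline{k}\,\overline{j}}\Omega^\Gamma_{\overline{k}\,\overline{i}}
=\sum_{\overline{k}}\sum_{k'\in\overline{k}}\ve_{k'j}\sum_{\sigma\in\Gamma}\Omega_{k',\sigma(i)}
=\sum_{\sigma\in\Gamma}\sum_{k\in J}\ve_{kj}\Omega_{k,\sigma(i)}.
\]
The inner sum, by the compatibility of $\Omega$ with $\ve$ in $\bfs$, equals $\delta_{\sigma(i),j}p_j$. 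Hence the total is $p_j\cdot\#\{\sigma\in\Gamma\mid\sigma(i)=j\}$. This count is zero unless $\overline{i}=\overline{j}$, in which case it equals the order of the stabilizer, $|\Gamma|/|\overline{j}|$.

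Therefore $p^\Gamma_{\overline{j}}=p_j|\Gamma|/|\overline{j}|$, which is nonzero and independent of the representative because $p_j=p_{\sigma(j)}$.

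The only delicate point is bookkeeping. One has to check that the normalization of $\Omega^\Gamma$ (a sum over all of $\Gamma$ rather than over the orbit) is consistent, so that the swap of $k'$ over $\overline{k}$ with $\sigma$ is legitimate. This is where $\Gamma$-invariance of $\Omega$ enters, guaranteeing that $\sum_\sigma\Omega_{k',\sigma(i)}$ does not depend on $k'\in\overline{k}$. No admissibility conditions (3) and (4) are needed here.
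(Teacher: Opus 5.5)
Your proposal is correct and is essentially the paper's argument. You expand $\ve^\Gamma$ and $\Omega^\Gamma$ into sums over $J$ and $\Gamma$, and apply the unfolded compatibility identity (valid since $j$ is mutable) to get $\sum_{\sigma\in\Gamma}\delta_{\sigma(i),j}p_j$. Counting the stabilizer then gives $p_j|\Gamma|/|\overline{j}|$, and taking an arbitrary representative $k'\in\overline{k}$ in $\Omega^\Gamma_{\overline{k}\,\overline{i}}$, justified by $\Gamma$-invariance, is exactly what makes the interchange of sums legitimate.
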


\begin{proof}
For any $\overline{j}\in \overline{J}_\uf$, and $\overline{i}\in \overline{J}$, we have
\[
\sum_{\overline{k}\in \overline{J}}\ve^\Gamma_{\overline{k}\overline{j}}\Omega^\Gamma_{\overline{k}\overline{i}}=\sum_{\overline{k}\in \overline{J}}\sum_{k'\in\overline{k}}\sum_{\sigma\in \Gamma}\ve_{k',j}\Omega_{k,\sigma(i)}
=\sum_{\sigma\in \Gamma}\delta_{\sigma(i),j}p_j
=\delta_{\overline{i}\overline{j}}p_j\frac{|\Gamma|}{|\overline{j}|},
\]
so we obtain (\ref{eq:CompatibleEqn}) for $\bfs^\Gamma$.
\end{proof}

\begin{prop}
\label{prop:SwCompatible}
The Poisson structure on $\mathring{S}_w$ is compatible with the cluster structure.
\end{prop}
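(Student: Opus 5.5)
We work in the single seed $\wt{\bfs}(-\bfi)$, where $\bfi$ is a reduced word for $w$. This is legitimate because, by Lemma~\ref{lem:IndepOfWord}, all the seeds $\wt{\bfs}(\bfi)$ are mutation equivalent. By the discussion in Section~\ref{sec:poisson:cluster}, compatibility propagates under mutation. So it suffices to prove two things for this one seed: the cluster variables are log canonical for $\{-,-\}_S$, and the resulting coefficient matrix satisfies (\ref{eq:CompatibleEqn}) with respect to the folded exchange matrix.

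\textbf{Step 1: Log canonicity in the unfolded seed.} Consider the unfolded seed $\bfs(\bfj)$ with $\bfj=(\bfi^{-1},-\bfi)$ for $G^{w,w^{-1}}$. By \cite{GSV10}, its cluster variables are log canonical for $\pi_\st$, with coefficient matrix $\Omega$ given by (\ref{eq:StdBracket}). Moreover, $\Omega$ satisfies (\ref{eq:CompatibleEqn}) for $\ve$ with some nonzero $p_j$.

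\textbf{Step 2: Admissibility of $\Omega$.} We check that $\Omega$ is admissible, i.e.\ $\Omega_{\sigma(i)\sigma(j)}=\Omega_{ij}$ for the involution $\sigma$ of Lemma~\ref{lem:FoldDBCSeed}.
\begin{itemize}
\item Under $\sigma$, the minor $\Delta_{u\omega,v\omega}$ is exchanged with $\Delta_{v\omega,u\omega}$ by Lemma~\ref{lem:ThetaMinor}. The order of indices is also reversed.
\item In (\ref{eq:StdBracket}), swapping $u\leftrightarrow v$ and reversing the order each contribute a sign, so the two signs cancel and $\Omega$ is $\sigma$-invariant.
\item Alternatively, transpose is anti-Poisson for $\pi_\st$ and also reverses the orientation of the pair. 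One must take care at the glued vertices $(i,m_i)$, which are $\sigma$-fixed.
\end{itemize}

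\textbf{Step 3: Matching the folded bracket.} Lemma~\ref{lem:PoissonBracketS} gives
\[
\{A_j,A_k\}_S=(\Omega_{jk}+\Omega_{\sigma(j),k})A_jA_k
\]
for cluster variables of $\wt{\bfs}(-\bfi)$, which are indexed by representatives $(i,l)$ with $l\ge m_i$. For a nontrivial orbit this equals $\Omega^\Gamma_{\overline{j}\overline{k}}$ as defined before Lemma~\ref{lem:FoldCompatible}. For a $\sigma$-fixed $j$, the definition of $\Omega^\Gamma$ with $|\Gamma|=2$ gives $2\Omega_{jk}$, which is exactly what Lemma~\ref{lem:PoissonBracketS} gives since $\sigma(j)=j$. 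Hence the Poisson bracket on $\mathring{S}_w$ is log canonical in $\wt{\bfs}(-\bfi)$ with matrix $\Omega^\Gamma$.

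\textbf{Step 4: Compatibility.} Lemma~\ref{lem:FoldCompatible} now gives (\ref{eq:CompatibleEqn}) for $\ve^\Gamma$ and $\Omega^\Gamma$. The constants $p_j|\Gamma|/|\overline{j}|$ are nonzero, so the seed is compatible. Mutation invariance, together with $\CA=\CU=\BC[\mathring{S}_w]$ from Theorem~\ref{thm:MainThm}, then yields compatibility for every seed.

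The main obstacle is Step~2 together with the index bookkeeping in Step~3. One must make sure that the representatives $l\ge m_i$ used in Lemma~\ref{lem:PoissonBracketS}, which come from the word $-\bfi$ with $u_{\le k}$ and $v_{>k}=e$, correspond consistently to orbits of $\sigma$. One must also confirm the signs in (\ref{eq:StdBracket}) under transposition at the glued vertices. If the direct sign check is messy, I would instead verify $\sigma$-invariance of $\Omega$ via the explicit formula, using the $W$-invariance of the form $(\cdot,\cdot)$.
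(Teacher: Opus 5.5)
Your reduction is the paper's: Lemma~\ref{lem:PoissonBracketS} and Lemma~\ref{lem:FoldCompatible} reduce everything to $\sigma$-admissibility of the GSV matrix $\Omega$ for $\bfs(\bfi^{-1},-\bfi)$, and your Steps 1, 3 and 4 are fine. The gap is in Step 2, which is where the actual work lies.

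Your sign-cancellation argument assumes that $\sigma$ reverses the order of \emph{every} pair $j<k$, and this is false. For a pair with $j<k$ and $\sigma(j)<\sigma(k)$, (\ref{eq:StdBracket}) gives $\Omega_{\sigma(j)\sigma(k)}=-\Omega_{jk}$. Admissibility there therefore requires a proof that $\Omega_{jk}=0$.

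Such pairs include the $\sigma$-fixed glued vertices $\Delta_{\omega_i,\omega_i}$ and pairs of frozen vertices at the same end, which the paper singles out. There are others. For $\bfi=(3,1,2,1,3)$ in $SL_4$, the mutable vertices $j=1$, $k=2$ of $\bfj=(3,1,2,1,3,-3,-1,-2,-1,-3)$ satisfy $\sigma(j)=6<7=\sigma(k)$. In general, nested pairs $j<k<k^+<j^+$ inside one half, and certain frozen--mutable pairs, behave this way. So the paper's own list of non-reversed pairs is not literally complete either.

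In all these cases $\Omega_{jk}=0$ does hold, but it has to be argued. For example, for a nested pair in the first half, $u_{\le j}=u_{\le k}=e$. By $W$-invariance, $(v_{>j}\omega_a,v_{>k}\omega_b)=(s_{i_k}\cdots s_{i_{j+1}}\omega_a,\omega_b)$. This equals $(\omega_a,\omega_b)$ because $a=|i_j|$ does not occur among $i_{j+1},\dots,i_k$.

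Your fallback argument has the wrong sign: $\tau(g)=g^T$ is \emph{Poisson}, not anti-Poisson, for $\pi_\st$. Indeed $\tau_*x^L=(x^T)^R$, $\tau_*x^R=(x^T)^L$ and $E_\alpha^T\wedge E_{-\alpha}^T=E_{-\alpha}\wedge E_\alpha$, so $\tau_*\pi_\st=\pi_\st$. Equivalently, $\tau$ is inversion composed with $\theta$, and both are anti-Poisson. An anti-Poisson $\tau$ would force $\Omega_{\sigma(j)\sigma(k)}=-\Omega_{jk}$. ``Reversing the orientation of the pair'' cannot compensate, since $\Omega_{jk}$ is determined by the two functions alone.

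With the correct sign, this becomes the cleanest proof of Step 2. The map $\tau$ preserves $G^{w,w^{-1}}$, and $A_{\sigma(j)}=A_j\circ\tau$ by Lemma~\ref{lem:FoldDBCSeed}. Hence $\{A_{\sigma(j)},A_{\sigma(k)}\}_\st=\{A_j,A_k\}_\st\circ\tau=\Omega_{jk}A_{\sigma(j)}A_{\sigma(k)}$ for all $j,k$, with no case analysis at all.
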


\begin{proof}
Combining Lemma~\ref{lem:PoissonBracketS} and \ref{lem:FoldCompatible}, it suffices to check that $\Omega$ is admissible for the seed $\bfs(\bfi^{-1},-\bfi)$. Let $j<k$. If $\sigma(j)>\sigma(k)$, then the result follows immediately from (\ref{eq:StdBracket}) by
\[
\Omega_{\sigma(j),\sigma(k)}=-\Omega_{\sigma(k),\sigma(j)}=
-\frac{( v_{> k}\omega_{|i_k|},v_{> j}\omega_{|i_j|})-( u_{\leq k}\omega_{|i_k|},u_{\leq j}\omega_{|i_j|})}{2}=\Omega_{jk}.
\]
Otherwise, $A_j$ and $A_k$ must be of the same form, either both $\Delta_{w\omega_i,\omega_i}$, both $\Delta_{\omega_i,\omega_i}$, or both $\Delta_{\omega_i,w\omega_i}$, for various $i\in I$. In any case, we have $\Omega_{jk}=\Omega_{\sigma(j),\sigma(k)}=0$.
\end{proof}

\section{Partial compactifications}
\label{sec:partialcpt}

\subsection{Partial compactifications of foldings}
\label{sec:partialcpt:folding}

In this section we study how partial compactification interacts with foldings.

Let $X$ be an affine variety with $\BC[X]= \overline{\CU}(\bfs)$, $U\subset X$ be the affine open subvariety with $\BC[U]= \CU(\bfs)$. Suppose we have a folding $\bfs^\Gamma$ such that the map $\pi:\CU(\bfs)\to \CU(\bfs^\Gamma)$ is surjective. Then $\pi$ defines a subvariety $Y_U$ closed in $U$, with $\BC[Y_U]= \CU(\bfs^\Gamma)$. Let $Y$ be the closure of $Y_U$ in $X$, so $\BC[Y]$ is a subalgebra of $\CU(\bfs^\Gamma)$.

\begin{prop}
\label{prop:CptFold}
If the frozen variables of $\bfs^\Gamma$ are prime in $\BC[Y]$, then $\BC[Y]= \overline{\CU}(\bfs^\Gamma)$.
\end{prop}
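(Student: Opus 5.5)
We prove the two inclusions $\BC[Y]\subseteq\overline{\CU}(\bfs^\Gamma)$ and $\overline{\CU}(\bfs^\Gamma)\subseteq\BC[Y]$ separately. Throughout, $\BC[Y]$ and $\CU(\bfs^\Gamma)=\BC[Y_U]$ sit inside the common fraction field, with $\BC[Y]\subseteq\CU(\bfs^\Gamma)$. We use the description \eqref{eq:PartialCompactification}:
\[
\overline{\CU}(\bfs^\Gamma)=\{f\in\CU(\bfs^\Gamma)\mid \nu_{\overline{i}}(f)\geq0 \text{ for all } \overline{i}\in\overline{J}_\fro\}.
\]

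\emph{Inclusion $\BC[Y]\subseteq\overline{\CU}(\bfs^\Gamma)$.} Since $\BC[X]=\overline{\CU}(\bfs)$ surjects onto $\BC[Y]$ (as $Y$ is closed in $X$), every $f\in\BC[Y]$ is $\pi(F)$ for some $F\in\overline{\CU}(\bfs)\subseteq\overline{\LP}(\bfs)$. Thus $F$ is a Laurent polynomial in the initial cluster variables of $\bfs$ that is polynomial in the frozen ones. The quotient map $\pi$ sends $A_i\mapsto A_{\overline{i}}$ and, by condition (1) of admissibility, sends frozen variables to frozen variables. Hence $\pi(F)\in\overline{\LP}(\bfs^\Gamma)$. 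Combined with $f\in\CU(\bfs^\Gamma)$, \eqref{eq:PartialCompactification} gives $f\in\overline{\CU}(\bfs^\Gamma)$.

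\emph{Inclusion $\overline{\CU}(\bfs^\Gamma)\subseteq\BC[Y]$.} Let $f\in\overline{\CU}(\bfs^\Gamma)$. Since $f\in\CU(\bfs^\Gamma)=\BC[Y_U]$, and $Y_U$ is the locus in $Y$ where the images of the frozen variables of $\bfs$ are nonzero, $\BC[Y_U]$ is the localization of $\BC[Y]$ at the frozen variables $A_{\overline{i}}$, $\overline{i}\in\overline{J}_\fro$. So $f=g/\prod_{\overline{i}} A_{\overline{i}}^{n_{\overline{i}}}$ for some $g\in\BC[Y]$. Choose such a representation with $\sum n_{\overline{i}}$ minimal, and suppose some $n_{\overline{i}}>0$. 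Then $A_{\overline{i}}\cdot(fA_{\overline{i}}^{n_{\overline{i}}-1}\cdots)=g$. By hypothesis $A_{\overline{i}}$ is prime in $\BC[Y]$, so we may consider $\fp=(A_{\overline{i}})$.

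The key claim is that $\nu_{\overline{i}}$ is the valuation attached to the height one prime $\fp$, possibly up to a positive scalar. Granting this, $\nu_{\overline{i}}(f)\ge0$ and $\nu_{\overline{i}}(A_{\overline{j}})=0$ for $\overline{j}\neq\overline{i}$ give $\nu_\fp(g)\ge n_{\overline{i}}>0$. Hence $g\in\fp$, say $g=A_{\overline{i}}g'$ with $g'\in\BC[Y]$, and this contradicts the minimality of the representation. Therefore all $n_{\overline{i}}=0$ and $f\in\BC[Y]$.

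\emph{Main obstacle: comparing $\nu_{\overline{i}}$ with $\nu_\fp$.} Here $\nu_{\overline{i}}$ is defined via the Laurent ring $\LP(\bfs^\Gamma)$, while $\nu_\fp$ is defined via $\BC[Y]$. The plan is as follows.

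\begin{enumerate}
\item Work in the localization $\BC[Y][A_{\overline{j}}^{-1}]$, where $\overline{j}$ ranges over the mutable variables and the other frozen variables. Use the inclusions $\BC[Y]\subseteq\overline{\CU}(\bfs^\Gamma)\subseteq\overline{\LP}(\bfs^\Gamma)$.
\item Note that $\overline{\LP}(\bfs^\Gamma)$ is a localization of a polynomial ring in which $A_{\overline{i}}$ generates a prime ideal; its associated valuation is exactly $\nu_{\overline{i}}$.
\item Observe that $\fp$ is the contraction of this prime. It is a prime of $\BC[Y]$ containing $A_{\overline{i}}$, and it is minimal over $A_{\overline{i}}$ since $(A_{\overline{i}})$ is already prime.
\item Since $\fp$ is principal and prime and $\BC[Y]$ is a domain, the localization $\BC[Y]_\fp$ is a DVR with uniformizer $A_{\overline{i}}$.
\item The valuation on the fraction field given by $\nu_{\overline{i}}$ is nonnegative on $\BC[Y]_\fp$ and takes the value $1$ on $A_{\overline{i}}$, so it coincides with $\nu_\fp$.
\end{enumerate}

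I expect verifying that $\nu_{\overline{i}}$ is nonnegative on $\BC[Y]_\fp$ (step 5), in particular on elements outside $\fp$, to be the delicate point. I would check it using the fact that units of $\BC[Y]_\fp$ must have valuation $0$, since they are invertible in the localization.
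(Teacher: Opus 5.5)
Your first inclusion $\BC[Y]\subseteq\overline{\CU}(\bfs^\Gamma)$ is exactly the paper's argument. For the reverse inclusion the paper simply invokes the classical version of \cite{QY25}*{Theorem~C}. Your direct argument, which strips frozen denominators using primeness, has the right shape, but its key claim $\nu_{\overline{i}}=\nu_\fp$ is not proved.

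In step 3 you assert that $\fp=A_{\overline{i}}\BC[Y]$ equals the contraction $\fq=\{b\in\BC[Y]\mid \nu_{\overline{i}}(b)>0\}$. Minimality of $\fp$ over $A_{\overline{i}}$ only gives $\fp\subseteq\fq$. The inclusion $\fq\subseteq\fp$ is literally the implication $\nu_{\overline{i}}(g)>0\Rightarrow g\in\fp$ that you need at the end. Step 5 has the same problem. Nonnegativity of $\nu_{\overline{i}}$ on $1/b$ for $b\in\BC[Y]\setminus\fp$ is again equivalent to $\fq\subseteq\fp$. Your appeal to units of $\BC[Y]_\fp$ is circular, since it presupposes that $\nu_{\overline{i}}$ is nonnegative on $\BC[Y]_\fp$.

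The ingredients you actually use (containment in $\overline{\LP}(\bfs^\Gamma)$, primeness, equal localizations) cannot suffice on their own. Take $R=\BC[x,y]\subset L=\BC[x,y/x]$. Then $x$ is prime in $R$ and $R[x^{-1}]=L[x^{-1}]$. Yet $xL$ contracts to $(x,y)$, and the $x$-adic valuation of $L$ equals $1$ on $y\notin xR$.

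The missing input is that $\BC[Y]$ contains \emph{all} cluster variables $A_{\overline{k}}$ of $\bfs^\Gamma$, since they are the images $\pi(A_k)$ of $A_k\in\overline{\CU}(\bfs)=\BC[X]$.

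\begin{enumerate}
\item For $\overline{k}\neq\overline{i}$ one has $A_{\overline{k}}\notin\fp$. Every element of $\fp$ has $\nu_{\overline{i}}\geq 1$, because $\BC[Y]\subseteq\overline{\LP}(\bfs^\Gamma)$ by your first inclusion, while $\nu_{\overline{i}}(A_{\overline{k}})=0$.
\item Hence the multiplicative set $S$ generated by the $A_{\overline{k}}$, $\overline{k}\neq\overline{i}$, avoids $\fp$.
\item $S^{-1}\BC[Y]$ equals the ring of Laurent polynomials in the $A_{\overline{k}}$, $\overline{k}\neq\overline{i}$, that are polynomial in $A_{\overline{i}}$. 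Both inclusions follow at once from $A_{\overline{k}}\in\BC[Y]\subseteq\overline{\LP}(\bfs^\Gamma)$.
\item Therefore $\BC[Y]_\fp=(S^{-1}\BC[Y])_{S^{-1}\fp}$ is the localization of that ring at the prime generated by $A_{\overline{i}}$. This is exactly the valuation ring of $\nu_{\overline{i}}$, so $\nu_\fp=\nu_{\overline{i}}$, and your step 4 becomes unnecessary.
\end{enumerate}

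With this in place, your minimal-denominator argument goes through. It then gives a self-contained proof, in this setting, of what the paper takes from the citation.
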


\begin{proof}
The projection $\pi$ restricts to a projection $\overline{\LP}(\bfs)\to \overline{\LP}(\bfs^\Gamma)$, so we also get a map $\overline{\CU}(\bfs)\to\overline{\CU}(\bfs^\Gamma)$. Any function $f\in \BC[Y]$ is the restriction of a function on $\BC[X]$. By the assumption $\BC[X]\simeq \overline{\CU}(\bfs)$, we have $f\in \overline{\CU}(\bfs^\Gamma)$ so $\BC[Y]\subset \overline{\CU}(\bfs^\Gamma)$. Then the equality follows from the classical version of \cite{QY25}*{Theorem~C}.
\end{proof}

Note that by \cite{QY25}*{Lemma~5.3}, any frozen variable in a partially compactified upper cluster algebra $\overline{\CU}$ is prime, so the assumption for primeness in $\BC[Y]$ in Proposition~\ref{prop:CptFold} is necessary.

\subsection{Applications}
\label{sec:partialcpt:appl}

We now apply Proposition~\ref{prop:CptFold} to the cluster structures on $\mathring{S}_w$. Using Theorem~\ref{thm:MainThm}, the only assumption we need to check is the primeness of the frozen variables.

\subsubsection{The symmetric space $SL_n/SO_n$}
\label{sec:partialcpt:appl:SLnSOn}

The first example is the symmetric space $SL_n/SO_n$, identified as the subvariety $S\subset G$. Recall the cluster structures on $\BC[G]$, $\BC[G^{w_0,w_0}]$ and $\BC[\mathring{S}_{w_0}]$.

\begin{cor}
\label{cor:SLn/SOn}
For any $u,v\in W$ such that $v^{-1}u=w_0$ with $\ell(w_0)=\ell(u)+\ell(v)$, and double reduced word $\bfi$ for $(u,v)$, we have an isomorphism $\overline{\CU}(\wt{\bfs}(\bfi))\simeq \BC[S]$, and $\overline{\CA}(\wt{\bfs}(\bfi))=\overline{\CU}(\wt{\bfs}(\bfi))$.
\end{cor}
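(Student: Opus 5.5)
We apply Proposition~\ref{prop:CptFold} with $X=G$, $U=G^{w_0,w_0}$ and the seed $\bfs=\bfs(\bfj)$, where $\bfj=(-\bfi^{-1},\bfi)$ is a double reduced word for $(w_0,w_0)$. By \cite{QY25}*{Appendix B} we have $\overline{\CU}(\bfs(\bfj))\simeq\BC[G]$, and by Theorem~\ref{thm:DBCcluster} we have $\CU(\bfs(\bfj))\simeq\BC[G^{w_0,w_0}]$. Theorem~\ref{thm:MainThm} with $w=w_0$ shows that $\pi:\CU(\bfs(\bfj))\to\CU(\wt{\bfs}(\bfi))$ is surjective, because it is identified with the restriction $\BC[G^{w_0,w_0}]\to\BC[\mathring{S}_{w_0}]$ and $\mathring{S}_{w_0}$ is closed in $G^{w_0,w_0}$. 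So $Y_U=\mathring{S}_{w_0}$. Its closure $Y$ in $G$ is $S$: indeed $S$ is closed and irreducible, being an orbit of the connected group $G$, and $\mathring{S}_{w_0}$ is a nonempty open subset of $S$, namely the nonvanishing locus of the frozen minors. Hence $\BC[Y]=\BC[S]$.

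The remaining hypothesis is that the frozen variables of $\wt{\bfs}(\bfi)$ are prime in $\BC[S]$. These frozen variables are the restrictions to $S$ of the frozen minors $\Delta_{\omega_i,w_0\omega_i}$ and $\Delta_{w_0\omega_i,\omega_i}$. By Lemma~\ref{lem:ThetaMinor} they agree on $S$, so each orbit gives a single function $f_i$, which on symmetric matrices is the minor in the upper-right corner (rows $1..i$, columns $n-i+1..n$). To prove $f_i$ prime, we show its zero locus $D_i=\{f_i=0\}\subset S$ is irreducible and that $f_i$ vanishes to order one along it. Since $S$ is smooth, hence normal, this makes $(f_i)$ a prime ideal provided $\BC[S]/(f_i)$ is reduced with irreducible support.

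For irreducibility, $D_i\setminus\bigcup_{j\neq i}\{f_j=0\}$ should be dense in $D_i$, and we describe it through the strata. It is the union of $\mathring{S}_{w}$ over $w\le w_0$ of codimension one, that is $w=w_0s_k$, on which $f_j\neq0$ for $j\neq i$. This should pick out the single stratum $\mathring{S}_{w_0s_{i^*}}$ for the appropriate index $i^*$, which is irreducible by \cite{Lu14}. We must also rule out extra components of $D_i$ lying in smaller strata, using $\dim D_i=\dim S-1$ on every component (Krull). Reducedness then follows by checking generic smoothness of $D_i$: on $\mathring{S}_{w_0}\cup\mathring{S}_{w_0s_{i^*}}$ the function $f_i$ should be a coordinate transverse to the divisor. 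This can be read off from Proposition~\ref{prop:WVDecomposition}, where the vanishing set $V$ is identified with $\BC\times\mathring{S}_{w}$ and the frozen minor is the coordinate on the $\BC$ factor.

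The main obstacle is primeness. $S$ is not factorial (the $SL_2$ example already shows this), so we cannot just cite irreducibility of a polynomial. What we actually need is that $(f_i)$ is radical with irreducible zero set, and for this we must locate every component of $D_i$ among the strata $\mathring{S}_w$. Once primeness holds, Proposition~\ref{prop:CptFold} gives $\overline{\CU}(\wt{\bfs}(\bfi))\simeq\BC[S]$.

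For $\overline{\CA}=\overline{\CU}$, take $f\in\overline{\CU}$. Since $\CA=\CU$ by Theorem~\ref{thm:MainThm}, $f$ is a polynomial in cluster variables divided by a monomial in the frozen variables. To clear this denominator, we use that $\BC[S]=\BC[G]/\ker$ and that $\BC[G]=\overline{\CA}(\bfs(\bfj))$ is generated by matrix entries. Every matrix entry is a cluster variable for some double reduced word of $(w_0,w_0)$, and its restriction is a cluster variable of $\wt{\bfs}$ by Lemmas~\ref{lem:FoldVariableMutation} and \ref{lem:IndepOfWord}. So the restrictions of the generators of $\BC[G]$ already lie in $\overline{\CA}(\wt{\bfs}(\bfi))$, and therefore $\BC[S]\subseteq\overline{\CA}\subseteq\overline{\CU}=\BC[S]$.
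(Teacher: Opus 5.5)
Your overall route is the paper's. You apply Proposition~\ref{prop:CptFold} with $X=G$ and $U=G^{w_0,w_0}$, prove primeness of $f_i=\Delta_{\omega_i,w_0\omega_i}|_S$ via the divisor $\overline{\mathring{S}_{w_0s_i}}$, and use generation by matrix entries for $\overline{\CA}=\overline{\CU}$. The paper simply asserts that $f_i$ generates the ideal of $\overline{\mathring{S}_{w_0s_i}}$. Your set-theoretic argument is sound once you add one computation. On $B^+wB^+$ the minor $\Delta_{w_0\omega_j,\omega_j}$ vanishes identically if $w\omega_j\neq w_0\omega_j$, and nowhere otherwise. So the $j$ with $f_j|_{\mathring{S}_w}=0$ form exactly the support of $w_0w$. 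Hence $D_i\setminus\bigcup_{j\neq i}D_j=\mathring{S}_{w_0s_i}$ (so your $i^*$ is $i$), and all other strata in $D_i$ have dimension at most $\dim S-2$. Krull then gives $D_i=\overline{\mathring{S}_{w_0s_i}}$.

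The gap is the multiplicity-one (reducedness) step, because Proposition~\ref{prop:WVDecomposition} does not give it. There, $V$ is a closed subset of the single stratum $\mathring{S}_{s_iws_i}$, cut out by a \emph{mutable} variable. The $\BC$-coordinate is the exchanged variable $\Delta_{s_iw\omega_i,\omega_i}$, and every frozen variable is a unit on that stratum. So it says nothing about how $f_i$ vanishes along $\mathring{S}_{w_0s_i}$, which lies outside $\mathring{S}_{w_0}$.

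A direct replacement: for $g\in\mathring{S}_{w_0s_i}$, let $\gamma(t)=y_{n-i}(t)\,g\,x_{n-i}(t)=y_{n-i}(t)\,g\,y_{n-i}(t)^T\in S$. We have $\dot w_0^{-1}x_{n-i}(t)\dot w_0=y_i(\pm t)$, and $\Delta_{\omega_i,\omega_i}$ is left $U^-$-invariant. Therefore $f_i(\gamma(t))=\Delta_{\omega_i,\omega_i}(g\dot w_0\,y_i(\pm t))=f_i(g)\pm t\,\Delta_{\omega_i,w_0s_i\omega_i}(g)$. By Lemma~\ref{lem:ThetaMinor} the coefficient is $\pm\Delta_{w_0s_i\omega_i,\omega_i}(g)$, which is nonzero on $B^+w_0s_iB^+$. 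So $d(f_i|_S)\neq0$ along $\mathring{S}_{w_0s_i}$, i.e.\ $f_i$ vanishes to order one on $D_i$. Since $S$ is smooth, $(f_i)$ has no embedded primes, so $(f_i)=I(D_i)$ is prime. (Alternatively, $(g,\lambda)\mapsto\lambda g$ is an \'etale $\mu_n$-cover $S\times\BC^\times\to\Sym_n\cap GL_n$ pulling $f_i$ back to $\lambda^if_i$. Moreover $f_i$ is irreducible in $\BC[\Sym_n]$ by the argument of Corollary~\ref{cor:Symn}.)

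A smaller point in your last paragraph: you need each matrix entry to be a cluster variable of a seed $\bfs(\bfj_1)$ with $\bfj_1=(-\bfi_1^{-1},\bfi_1)$ of symmetric form. For an arbitrary double reduced word of $(w_0,w_0)$, the path back to $\bfs(\bfj)$ uses non-orbit mutations, so Lemma~\ref{lem:FoldVariableMutation} does not transport its variables. The statement does hold for suitable $(u,v)$ and $\bfi_1$, and the paper likewise just asserts it.
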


\begin{proof}
For each $i\in I$, the frozen variable $\Delta_{\omega_i,w_0\omega_i}$ in $\wt{\bfs}(\bfi)$ generates the ideal for the irreducible closed subset $\overline{\mathring{S}_{w_0s_i}}=S\cap \overline{B^+w_0s_iB^+}$ in $S$. Therefore, the frozen variables are prime in $\BC[S]$. 

The second claim $\overline{\CA}=\overline{\CU}$ follows from the fact that the matrix entries are cluster variables and generate $\BC[S]$.
\end{proof}

\subsubsection{The symmetric matrices $\Sym_n$}
\label{sec:partialcpt:appl:Symn}

Let $\Sym_n\subset M_n$ be the variety of symmetric $n\x n$ matrices over $\BC$. Cluster structures on $M_n$ were constructed in \citelist{\cite{FWZ21b}\cite{SY25}}. Here we translate the construction in \cite{FWZ21b} in terms of double Bruhat cells in $SL_{n+1}$.

Embed $GL_n$ in $SL_{n+1}$ by sending $g$ to the block diagonal matrix $\begin{pmatrix}
g & 0\\
0 & \det(g)^{-1}
\end{pmatrix}$. Then the Weyl group $W(GL_n)$ is identified as a parabolic subgroup of $W(SL_{n+1})$ with indices $1,2,\cdots,n-1$. For any $u,v\in W(GL_n)$, the double Bruhat cell $GL_n^{u,v}$ is mapped isomorphically onto $SL_{n+1}^{u,v}$. Then for the longest element $w_1$ of $W(GL_n)$, the cluster structure $\CU$ on $SL_{n+1}^{w_1,w_1}$ induces a cluster structure $\BC[M_n]\simeq \overline{\CU}$ by partial compactification. The cluster variables for $M_n$ are exactly the corresponding matrix minors from the cluster variables for $SL_{n+1}^{w_1,w_1}$. By construction, these minors never involve the last row and column in $SL_{n+1}$.

\begin{cor}
\label{cor:Symn}
For any $u,v\in W(GL_n)$ such that $v^{-1}u=w_1$ with $\ell(w_1)=\ell(u)+\ell(v)$, and double reduced word $\bfi$ for $(u,v)$, we have an isomorphism $\overline{\CU}(\wt{\bfs}(\bfi))\simeq \BC[\Sym_n]$, and $\overline{\CA}(\wt{\bfs}(\bfi))=\overline{\CU}(\wt{\bfs}(\bfi))$.
\end{cor}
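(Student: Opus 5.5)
We follow the proof of Corollary~\ref{cor:SLn/SOn}, now with the setup of Section~\ref{sec:partialcpt:folding} given by $X=M_n$, $U=GL_n\cong SL_{n+1}^{w_1,w_1}$ (through the block embedding) and $Y_U=\Sym_n\cap GL_n$. The first step is to identify $Y_U$ with the stratum $\mathring{S}_{w_1}$ for the pair $(SL_{n+1},SO_{n+1})$. If $g\in GL_n$, then $g$ is symmetric exactly when the block matrix $\mathrm{diag}(g,\det(g)^{-1})$ is symmetric. Moreover, this block matrix lies in $B^+w_1B^+$ precisely when it lies in the image of $GL_n^{w_1,w_1}$, which is all of $GL_n$ since $w_1$ is the longest element of $W(GL_n)$. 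Hence $Y_U\cong \mathring{S}_{w_1}$. Theorem~\ref{thm:MainThm} then yields $\BC[Y_U]\simeq\CU(\wt{\bfs}(\bfi))$, and by the commutative diagram there, the map $\pi:\CU(\bfs(\bfj))\to\CU(\wt{\bfs}(\bfi))$ is surjective, since it is identified with the restriction $\BC[GL_n]\to\BC[\Sym_n\cap GL_n]$. Note that $\Sym_n\cap GL_n$ is closed in $GL_n$. The discussion before the corollary gives $\BC[M_n]\simeq\overline{\CU}(\bfs(\bfj))$, and the closure of $Y_U$ in $M_n$ is $\Sym_n$, because $\Sym_n$ is irreducible (it is an affine space) and $\Sym_n\cap GL_n$ is a nonempty open subset of it. This places us exactly in the hypotheses of Proposition~\ref{prop:CptFold}.

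The remaining hypothesis is that the frozen variables of $\wt{\bfs}(\bfi)$ are prime in $\BC[\Sym_n]$. This is the step I expect to be the main obstacle, since there is no group action available on $\Sym_n$ as there was on $S$. The frozen variables are the restrictions of the minors $\Delta_{\omega_i,w_1\omega_i}$ for $i\in[1,n-1]$, that is, the corner minors on the first $i$ rows and last $i$ columns (up to transpose, which is irrelevant on $\Sym_n$), together with $\det$ on the last copy. I would argue directly that each of these is an irreducible polynomial on the affine space $\Sym_n$. Since the coordinate ring of an affine space is a polynomial ring and hence a UFD, irreducible implies prime.

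To prove irreducibility, I would use that the corner minor $\det(x_{a,n-i+b})_{a,b\leq i}$ has degree one in each entry. In the symmetric setting, entries can repeat only when the row and column index sets overlap, which happens exactly when $i>n/2$. In the non-overlapping case, $i\leq n/2$, the minor is a generic determinant in independent variables, which is irreducible. In the overlapping case, I would pick a variable that occurs exactly once; the antidiagonal-corner entry $x_{1,n}$ works. The minor is then linear in it, $f=x_{1,n}\cdot c+d$, where the cofactor $c$ is a smaller corner minor of the same type. Irreducibility of $f$ then follows once $c$ and $d$ have no common factor, and I would establish this by induction on $i$, using a specialization in which $c$ restricts to a nonzero irreducible while $d$ does not vanish. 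The case $\det$ itself is the standard fact that the generic symmetric determinant is irreducible.

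With primeness established, Proposition~\ref{prop:CptFold} gives $\BC[\Sym_n]=\overline{\CU}(\wt{\bfs}(\bfi))$. For $\overline{\CA}=\overline{\CU}$, I would argue as in Corollary~\ref{cor:SLn/SOn}. The matrix entries $x_{ab}$ are cluster variables of suitable seeds for $M_n$, as $1\times1$ minors arising from double wiring diagrams. Their restrictions are therefore cluster variables of mutation-equivalent folded seeds (by Lemma~\ref{lem:IndepOfWord}), and these generate $\BC[\Sym_n]$.
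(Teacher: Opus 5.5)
Your overall plan matches the paper's: apply Proposition~\ref{prop:CptFold} with $X=M_n$, and prove the frozen corner minors prime in the polynomial ring $\BC[\Sym_n]$. You do this by expanding along an entry that occurs only once ($x_{1n}$, or $x_{11}$ for $\det$), whose cofactor is a frozen minor of a smaller symmetric matrix, and inducting. However, the setup rests on a false claim: $GL_n^{w_1,w_1}$ is \emph{not} all of $GL_n$. For the longest element, $B^+w_1B^+\cap B^-w_1B^-$ is the open double Bruhat cell. This is the proper open subset of $GL_n$ on which the corner minors $\Delta_{\omega_i,w_1\omega_i}$ and $\Delta_{w_1\omega_i,\omega_i}$, $1\le i<n$, do not vanish. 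For $n\ge 2$ the identity matrix lies in $GL_n$ but not in this cell, since $x_{1n}=0$ there. So with $U=GL_n$ the hypothesis $\BC[U]=\CU(\bfs(\bfj))$ of Proposition~\ref{prop:CptFold} fails: the frozen variable $x_{1n}$ is a unit in $\CU(\bfs(\bfj))$ but not in $\BC[GL_n]$. Likewise, $\mathring{S}_{w_1}$ is the proper open subset $\Sym_n\cap GL_n^{w_1,w_1}$ of $\Sym_n\cap GL_n$, so your two descriptions $Y_U=\Sym_n\cap GL_n$ and $Y_U\cong\mathring{S}_{w_1}$ are incompatible. The correct choice is $U=GL_n^{w_1,w_1}$, the principal open subset of $M_n$ where all frozen variables of $\bfs(\bfj)$ are nonzero, with $Y_U=\Sym_n\cap GL_n^{w_1,w_1}\cong\mathring{S}_{w_1}$; this is the identification the paper starts from. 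With this correction your closure argument is unchanged. This $Y_U$ is still a nonempty open subset of the irreducible variety $\Sym_n$ (it contains the antidiagonal permutation matrix), so its closure is $\Sym_n$.

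There are two smaller points. First, in the irreducibility step, the specialization you describe does not rule out a common factor. Since $c$ is irreducible by induction, what you need is exactly $c\nmid d$, and $c\mid d$ is compatible with $c$ specializing to a nonzero irreducible and $d$ to something nonzero. The paper settles this by a degree count. Grade by degree in the entries $x_{pq}$ with $p,q\in\{2,\dots,n-1\}$. The cofactor $c$ is homogeneous of degree $i-1$. Every monomial of $d$ contains one $x_{1b}$ with $b<n$ and one $x_{an}$ with $a>1$, so $d$, which is nonzero for $i\ge 2$, is homogeneous of degree $i-2$. This works for every $i<n$, so the split by overlap is unnecessary. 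The induction must run over $n$, because the cofactor lives in $\Sym_{n-2}$ and is a symmetric determinant when $i=n-1$. Second, for $\overline{\CA}=\overline{\CU}$, Lemma~\ref{lem:IndepOfWord} only concerns the folded seeds $\wt{\bfs}(\bfi)$. Knowing that $x_{ab}$ is a cluster variable for some double wiring diagram seed of $M_n$ does not by itself make its restriction a cluster variable of the folded structure. You need $x_{ab}$ to occur in a seed $\bfs(\bfj)$ with $\bfj=(-\bfi^{-1},\bfi)$, or to be reached from one by orbit mutations; the paper also asserts this without detail.
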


\begin{proof}
We have $\CU(\wt{\bfs}(\bfi))\simeq \BC[\Sym_{n+1}\cap SL_{n+1}^{w_1,w_1}]\simeq \BC[\Sym_n\cap GL_n^{w_1,w_1}]$.

$\BC[\Sym_n]$ is a polynomial ring hence a UFD. The frozen variables are the determinants of the top-right $k\x k$ submatrices for $k\leq n$. We show that they are irreducible hence prime on $\Sym_n$ by induction. The base cases $n=1,2$ are trivial.

Let $P=\{g_{ij}\}_{i\leq j}$ be the set of coordinates for $\Sym_n$, so $\BC[\Sym_n]$ is isomorphic to the polynomial ring generated by $P$. By cofactor expansion, the determinant can be written as $ag_{11}+b$, where $a$ is the determinant for $\Sym_{n-1}$ obtained by removing the first row and column, and $b\in \BC[P\backslash\{g_{11}\}]$. Then $a$ is irreducible by induction hypothesis. By a degree count for the coordinates in $\Sym_{n-1}$, we see that $a$ does not divide $b$, so the determinant is irreducible.

Any other frozen variable in $\wt{\bfs}(\bfi)$ can be written as $cg_{1n}+d$, where $c$ is a frozen variable for $\Sym_{n-2}$ obtained by removing the first and last rows and columns, and $d\in \BC[P\backslash\{g_{1n}\}]$. The same argument shows that it is irreducible, and we conclude that $\overline{\CU}(\wt{\bfs}(\bfi))\simeq \BC[\Sym_n]$ by Proposition~\ref{prop:CptFold}.

$\overline{\CA}=\overline{\CU}$ again follows from the fact that the matrix entries are cluster variables and generate $\BC[\Sym_n]$.
\end{proof}

\subsection{An exceptional isomorphism}
\label{sec:partialcpt:iso}

We also record an exceptional isomorphism discovered from observing the cluster structure on $\Sym_n$.

Let $J$ be the $n\x n$ matrix with entries $c_{ij}=1$ if $i+j=n+1$ and $c_{ij}=0$ otherwise. Consider the symplectic group $G=Sp_{2n}\subset GL_{2n}$ defined using the symplectic form $\Omega=\begin{pmatrix}
O & J\\
-J & O
\end{pmatrix}$ and $B^+$ and $B^-$ be the Borel subgroup of upper and lower triangular matrices in $G$ respectively. Let $\CB=G/B^+$ be the flag variety. For any $w\in W=C_n$, we have the Schubert cell $\mathring{\CB}_w=B^+wB^+/B^+$, which is isomorphic to $\BC^{\ell(w)}$.

Let $\Phi$ be the map $\Sym_n\to \mathring{\CB}_{w^J}$ given by $X\mapsto \begin{pmatrix}
I & JX\\
0 & I
\end{pmatrix}\dot{w}^JB^+$, where $w^J$ is the minimal length representative of the longest element in the quotient $C_n/A_{n-1}$. Explicitly, $w^J$ has entries $c_{ij}=(-1)^i$ if $i\neq j$ and $i\equiv j\pmod{n}$, and $c_{ij}=0$ otherwise. Since $\mathring{\CB}_{w^J}\simeq U^+\cap w^JU^-{w^J}^{-1}$, a direct calculation shows that $\Phi$ is an isomorphism. Furthermore, $\Phi(X)\in B^-B^+/B^+$ if and only if $\begin{pmatrix}
I & JX\\
0 & I
\end{pmatrix}\dot{w}^J\in B^-B^+$. By checking on the principal minors of the product, this is equivalent to $X$ being in the open double Bruhat cell in $GL_n$.

Let
\[
\bfi=(-1,-2,\cdots,-(n-1),-1,-2,\cdots,-(n-2),\cdots,-1),
\]
so we get a seed $\bfs=\wt{\bfs}(\bfi)$ for $\Sym_n$. Its vertices are indexed by $(i,k)$ for $1\leq i\leq n$ and $0\leq k\leq n-i$, where the vertices $(i,0)$ are frozen. The cluster variables are of the form $\Delta_{\omega_i,w\omega_i}$.

Meanwhile, the open Richardson variety $\mathring{\CB}_{e,w^J}=B^-B^+/B^+\cap B^+w^JB^+/B^+$ is a quotient of $G^{w^J,e}$ by the maximal torus $T$, so it inherits a cluster structure $G^{w^J,e}$ by deleting a set of frozen variables. $w^J$ has reduced word
\[
\bfi_C=(n,n-1,\cdots,1,n,n-1,\cdots,2,\cdots,n),
\]
where $n$ corresponds to the long root of $C_n$. Then we also get a seed $\bfs_C=\bfs(-\bfi_C)$ for $\mathring{\CB}_{e,w^J}$ with vertices indexed by $(i,k)$, $1\leq i \leq n$, $1\leq k\leq i$, where the vertices $(i,i)$ are frozen. The cluster variables are of the form $\Delta_{\omega_i,w\omega_i}(z)$ for $z\dot{w}^JB^+\in \mathring{\CB}_{e,w^J}$ where $z\in U^+$. Here the frozen vertices $(i,0)$ for the cluster structure on $G^{w^J,e}$ are deleted. By \cite{GLS11}*{Theorem~3.3}, the partial compactification $\overline{\CU}(\bfs(\bfi_C))\simeq \BC[\mathring{\CB}_{w^J}]$.

By construction, the vertices in $\wt{\bfs}(\bfi)$ with $k=1$ have multiplicity 1 and the others have multiplicity $\frac{1}{2}$. For $\bfs(\bfi_C)$, the vertices with $i=1$ have multiplicity 1 while the others have multiplicity $\frac{1}{2}$.

\begin{prop}
\label{prop:Exceptional}
The isomorphism $\Phi:\Sym_n\isoto \mathring{\CB}_{w^J}$ respects the cluster structures, where the index $(i,k)$ in $\bfs'=\bfs(\bfi_C)$ is identified with the index $(n+1-k,k-i)$ in $\bfs=\wt{\bfs}(\bfi)$.
\end{prop}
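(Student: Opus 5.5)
The plan is to compare the two cluster structures in three steps: the exchange matrices, the cluster variables, and then the upper cluster algebras.

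\emph{Exchange matrices.} First I would compute the folded quiver of $\bfs=\wt{\bfs}(\bfi)$ directly from the definition of $\bfs(\bfj)$ in Section~\ref{sec:prelim:dbc} for $\bfj=(-\bfi^{-1},\bfi)$. The word $\bfi$ is the standard "staircase" word for the longest element of $A_{n-1}$, so $\bfs(\bfj)$ is the usual triangular grid quiver for $GL_n^{w_1,w_1}$. Folding by the flip $\sigma$ of Lemma~\ref{lem:FoldDBCSeed} fixes the middle column $k=1$ (the glued vertices $(i,m_i)$) and halves the multiplicities elsewhere. Next I would compute $\bfs(-\bfi_C)$ from the same formula, using the $C_n$ Cartan matrix. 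With the relabelling $(i,k)\mapsto(n+1-k,k-i)$, I would check entry by entry that the two exchange matrices agree and that the symmetrizers match. The symmetrizers are the multiplicities recorded just before the proposition: the vertices with $k=1$ have multiplicity $1$, corresponding to $i=1$ on the $C_n$ side.

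The only nonsymmetric entries come from arrows between the $k=1$ column and its neighbours in $\bfs$. In $\bfs(\bfj)$ such a vertex has two $\sigma$-symmetric neighbours, so it receives weight $2$ after folding. This should match exactly the entries $a_{n,n-1}=-2$ (long root $n$) in $\bfs_C$. Frozen vertices must also correspond: the vertices $(i,0)$ of $\bfs$ should match the vertices $(i,i)$ of $\bfs_C$ after the index change. If the stated identification has a sign convention issue, I would adjust by reversing all arrows, which does not change $\CU$.

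\emph{Cluster variables.} Second, I would compare cluster variables under $\Phi$. Write $z=\begin{pmatrix} I & JX\\ 0&I\end{pmatrix}$. Then $\Delta_{\omega_i,w\omega_i}(z)$ for $w$ a prefix of $w^J$ is a minor of $z\dot w^J$ supported on rows $1,\dots,i$. Since $z$ is unipotent with only the $JX$ block off the diagonal, this minor reduces to a minor of $JX$, up to sign. Multiplying by $J$ reverses the rows, so these become the top-right $k\times k$-type minors of $X$, which are the cluster variables $\Delta_{\omega_i,w\omega_i}$ of $\bfs$ restricted to $\Sym_n$ via Lemma~\ref{lem:ThetaMinor}. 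This step is a direct minor computation indexed by the prefixes of $\bfi_C$. I expect the main obstacle to be bookkeeping the signs coming from $\dot w^J$ and $J$ so that each variable matches exactly rather than up to $\pm1$. If signs remain, I would absorb them by rescaling, which is harmless because the matching vertices are frozen or the sign is even in the exchange relations.

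\emph{Conclusion.} Third, I would conclude. Matched exchange matrices and matched initial cluster variables give $\Phi^*\colon\overline{\CU}(\bfs')\to\overline{\CU}(\bfs)$ as the identity on the ambient fields. Both sides are identified with coordinate rings: $\overline{\CU}(\bfs)\simeq\BC[\Sym_n]$ by Corollary~\ref{cor:Symn}, and $\overline{\CU}(\bfs_C)\simeq\BC[\mathring{\CB}_{w^J}]$ by \cite{GLS11}. Since $\Phi$ is an isomorphism of varieties, $\Phi$ respects the cluster structures. As a consistency check, I would confirm that $\Phi$ maps $\Sym_n\cap GL_n^{w_1,w_1}$ onto $\mathring{\CB}_{e,w^J}$, as already noted via principal minors, so the frozen loci also correspond.
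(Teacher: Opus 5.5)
Your plan follows the paper's proof. The exchange matrices agree by construction, with the $\sigma$-fixed vertices of the folding carrying the weight-$2$ arrows that match the long root of $C_n$. The cluster variables are matched by reducing the generalized minors of $\begin{pmatrix} I & JX\\ 0 & I\end{pmatrix}$ to solid minors of $X$, and your last step through Corollary~\ref{cor:Symn} and \cite{GLS11} makes explicit what the paper leaves implicit.

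The one point to tighten is your treatment of signs. Flipping the sign of a frozen variable is not harmless in general, since it changes the sign of one exchange monomial at each mutable neighbour. You do not need that claim, though. Once both rings are known to be the upper cluster algebras, regularity of $\Phi^*(\mu_k(A'_k))$ forces $\prod_v\epsilon_v^{\ve_{kv}}=1$ for every mutable $k$; otherwise $M_+/A_k$ would lie in $\LP(\mu_k(\bfs))$ for the exchange monomial $M_+$. So any signs that occur come from a cluster automorphism.
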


\begin{proof}
The exchange matrices match by construction of $\bfs$ and $\bfs_C$, where their quivers are orientations of a triangular tiling, with weighted arrows.

We now compute $A_{(i,k)}'(\Phi(X))$ in terms of $X$. By definition, they are of the form $\Delta_{\omega_i,w\omega_i}\begin{pmatrix}
I & JX\\
0 & I
\end{pmatrix}$ for the generalized minors $\Delta_{\omega_i,w\omega_i}$ in $Sp_{2n}$. The principal minor $\Delta_{\omega_i,\omega_i}$ is the determinant of the top-left $i\x i$ submatrix, so $A_{(i,k)}'(\Phi(X))$ is given by the corresponding minor of $\begin{pmatrix}
I & JX\\
0 & I
\end{pmatrix}\dot{u}_{\leq(i,k)}$. It is easy to check that this is equal to the minor of $X$ with rows $\{k-i+1,\cdots,n+1-i\}$ and columns $\{1,\cdots,n+1-k\}$, so we have $\Phi^*(A_{(i,k)}')=A_{(n+1-k,k-i)}$. This shows that the cluster variables match, so we have a cluster isomorphism.
\end{proof}

The subvariety $\Sym_n\cap GL_n$ is stratified by $\mathring{S}_w$ ($w\in A_{n-1}$) from the Bruhat decomposition of $GL_n$, while $\mathring{\CB}_{w^J}$ is naturally stratified by the open Richardson varieties $\mathring{\CB}_{v,w^J}$ for $v\leq w^J$ in $C_n$.

\begin{prop}
\label{prop:Phi_stratification}
The isomorphism $\Phi$ respects the stratifications, mapping $\mathring{S}_w$ to $\mathring{\CB}_{w_Jw,w^J}$, where $w_J$ is the longest element of $A_{n-1}$. The singular matrices in $\Sym_n$ are mapped to the closure of $\mathring{\CB}_{s_n,w^J}$ in $\mathring{\CB}_{w^J}$.
\end{prop}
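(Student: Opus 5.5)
We will compare the two stratifications through the vanishing and non-vanishing of generalized minors, which $\Phi$ matches by Proposition~\ref{prop:Exceptional}. For $X\in\Sym_n\cap GL_n$, write $\Phi(X)=z\dot w^JB^+$ with $z=\begin{pmatrix} I & JX\\ 0 & I\end{pmatrix}\in U^+$. This is a point of $\mathring{\CB}_{w^J}$, so it lies in exactly one open Richardson variety $\mathring{\CB}_{v,w^J}$. The index $v$ is determined by the relative position of $z\dot w^J$ with respect to $B^-$, i.e. by which $B^-$-orbit $B^-vB^+/B^+$ contains the flag. Our first step is to compute this relative position by a block computation.

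Concretely, we write $z\dot w^J$ in $2\times2$ block form. Since $w^J$ swaps the two blocks up to signs, $z\dot w^J$ has the block shape $\begin{pmatrix} JX & \pm J'\\ I & 0\end{pmatrix}$ up to signs. Acting on the left by block lower unipotent and block diagonal elements of $B^-$, and on the right by $B^+$, we reduce the question to the Bruhat position of $JX$, or equivalently of $X$ twisted by $J$, with respect to $B^-\times B^+$ in $GL_n$.

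If $X\in\mathring{S}_w$, then $X\in B^-w^{-1}B^-\cap B^+wB^+$. Multiplying by $J=\dot w_J$ (up to sign) converts the relevant $B^-$-Bruhat data into $B^-(w_Jw)B^+$-type data for the $GL_n$ Levi block. The $Sp_{2n}$ Weyl group element is then the image of $w_Jw$ under $A_{n-1}\subset C_n$. We double-check the exact element on the open cell. For $w=w_J$, Proposition~\ref{prop:Exceptional}'s setup already shows that $X$ in the open double Bruhat cell corresponds exactly to $\Phi(X)\in B^-B^+/B^+$, i.e. $v=e=w_Jw_J$. For general $w$, we verify the claim using minors: by Proposition~\ref{prop:Exceptional}, $\Phi^*$ carries the chamber minors $\Delta_{\omega_i,u\omega_i}$ of $Sp_{2n}$ to minors of $X$. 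The Richardson stratum is cut out by the vanishing pattern of the extremal minors $\Delta_{v\omega_i,\cdot}$, and $\mathring{S}_w$ is cut out by the pattern of minors $\Delta_{w\omega_i,\omega_i}$ of $X$. Matching these conditions index by index gives $v=w_Jw$. Dimensions serve as a check: $\dim\mathring{S}_w=\ell(w)+n-1$ plus one for $GL$, which equals $\ell(w^J)-\ell(w_Jw)=\binom{n+1}{2}-\ell(w_J)+\ell(w)$.

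For the singular locus, $\det X=0$ is the vanishing of the frozen variable $A_{(1,0)}$, namely the full determinant. Under $\Phi$ this is the frozen minor $\Delta_{\omega_n,w^J\omega_n}$ attached to the long root $n$. Its zero locus in $\mathring{\CB}_{w^J}$ is the complement of $B^-s_nB^+\cdot$-type cells, and so equals the closure of $\mathring{\CB}_{s_n,w^J}$. This follows because a flag is not in $\overline{B^-s_nB^+}$ exactly when the $\omega_n$ minor does not vanish, and $\{\Delta_{\omega_n,\cdot}=0\}$ is the Richardson divisor $\overline{\mathring{\CB}_{s_n,w^J}}$; irreducibility follows from its primeness, Corollary~\ref{cor:Symn}.

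The main obstacle is the precise sign and Weyl group bookkeeping identifying $v=w_Jw$ rather than, for instance, $ww_J$ or an inverse. We will settle this with the explicit matrix block reduction, cross-checked by the case $w=e$: diagonal symmetric matrices should land in the top stratum $\mathring{\CB}_{w_J,w^J}$, whose dimension is $n$.
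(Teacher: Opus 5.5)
Your argument is correct. For the singular locus it is the same as the paper's: $\det X=\pm\Delta_{\omega_n,\omega_n}(z\dot w^J)$, and the zero set of this minor in $\mathring{\CB}_{w^J}$ is the union of the $\mathring{\CB}_{v,w^J}$ with $v\geq s_n$, i.e.\ $v\notin A_{n-1}$. For the strata you take a different route.

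The paper reads off $v$ from vanishing patterns of flag minors. It first uses $\Delta_{\omega_n,\omega_n}$ to confine $\Phi(X)$ to strata with $v\in A_{n-1}$. It then proves $\Delta_{w_Ju\omega_i,\omega_i}(z\dot w^J)=\pm\Delta_{u\omega_i,\omega_i}(X)$ for $u\in A_{n-1}$ and $i\neq n$.

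You instead compute the double coset $B^-z\dot w^JB^+$ directly. Made explicit, $z\dot w^J=\begin{pmatrix} JXD & D'\\ D & 0\end{pmatrix}$ with $D,D'$ diagonal sign matrices. The upper right block is not $\pm J'$, though this is harmless. Left multiplication by $\begin{pmatrix} I & 0\\ -X^{-1}J & I\end{pmatrix}$ turns it into a Levi element $\operatorname{diag}(JXD,\ast)$ times an element of $U^+$. This element lies in $B^-\subset Sp_{2n}$ precisely because $X$ is invertible and $JX^{-1}J$ is symmetric. That is the only place $X\in\Sym_n\cap GL_n$ enters, so you should say it explicitly.

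From there, $JX\in B^-\dot w_J\dot wB^+$ in $GL_n$ gives $v=w_Jw$. The equality $\Phi(\mathring{S}_w)=\mathring{\CB}_{w_Jw,w^J}$ then follows because $\Phi$ is bijective and both families of strata partition. Your route makes $v\in A_{n-1}$ and the Weyl group bookkeeping you worried about automatic, with no appeal to the minor characterization of Bruhat cells on either side. The paper's route is shorter given that characterization, and it uses one family of minors for both halves of the statement. Both rest on the same fact: the top left $n\times n$ block of $z\dot w^J$ is $JX$ up to column signs.

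Drop the \emph{verify using minors} paragraph, since as written it does not work. Proposition~\ref{prop:Exceptional} only matches the chamber minors $\Delta_{\omega_i,u\omega_i}(z)$, and these do not detect the $B^-$-orbit of the flag. What would be needed is the identity for $\Delta_{u\omega_i,\omega_i}(z\dot w^J)$ that the paper proves, and your block reduction already makes it unnecessary. Also, $\det X=\Delta_{\omega_n,\omega_n}$ sits at the frozen vertex $(n,0)$ of $\wt{\bfs}(\bfi)$, not $(1,0)$.
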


\begin{proof}
The closure of $\mathring{S}_w$ in $\Sym_n\cap GL_n$ is defined by $\Delta_{u\omega_i,\omega_i}(X)$ for all $u\omega_i\not\leq w\omega_i$. Meanwhile, the closure of $\mathring{\CB}_{v,w^J}$ in $\mathring{\CB}_{w^J}$ is defined by $\Delta_{u\omega_i,\omega_i}(gB^+)$ for all $u\omega_i\not\geq v\omega_i$.

For $X\in \Sym_n$, $\det X=0$ if and only if $\Delta_{\omega_n,\omega_n}(\Phi(X))=0$. This equation defines the closure of $\mathring{\CB}_{s_n,w^J}$, so we get the claim for the singular matrices.

The complement of the closure of $\mathring{\CB}_{s_n,w^J}$ is the union of $\mathring{\CB}_{v,w^J}$ for $v\in A_{n-1}$. Therefore, for any $X\in \Sym_n\cap GL_n$, we have $\Phi(X)\in \mathring{\CB}_{v,w^J}$ for some $v\in A_{n-1}$. To determine $v$, we only need to check $\Delta_{u\omega_i,\omega_i}(\Phi(X))$ for $u\in A_{n-1}$ and $i\neq n$. Since $J=\dot{w}_J$ in $GL_n$ up to $\pm$ signs in the entries, while $\dot{w}^J=\begin{pmatrix}
O & I\\
-I & O
\end{pmatrix}$ in $Sp_{2n}$ up to $\pm$ signs in the entries, we have 
\[
\Delta_{w_Ju\omega_i,\omega_i}\left(\begin{pmatrix}
I & JX\\
0 & I
\end{pmatrix}\dot{w}^J\right)=\pm \Delta_{u\omega_i,\omega_i}(X),\]
and the result follows.
\end{proof}

\begin{example}
We continue with our running example. The quiver for $\Sym_3$ is obtained from the quiver for $\mathring{S}_{w_0}$ in $SL_3/SO_3$ by adding a frozen vertex for the determinant.
\begin{center}
\begin{tikzpicture}[every node/.style={inner sep=0, minimum size=0.5cm, thick, fill=white, draw}, x=2cm, y=1cm]
\node (0) at (0,0) {\red{$-1$}};
\node (1) at (1,0) [circle]{\red{$1$}};
\node (2) at (2,0) [circle]{$3$};
\node (5) at (1,-1.5) {\red{$-2$}};
\node (6) at (2,-1.5) [circle]{$2$};
\node (7) at (2,-3) {$\det$};
\drawpath{0,1,2,6,1,5,6,7}{black}
\end{tikzpicture}
\end{center}
Rotating the quiver by 90 degrees, we get a quiver for $\mathring{\CB}_{w^J}$ in $Sp_6$, where $w^J=s_3s_2s_1s_3s_2s_3$.
\end{example}


\begin{thebibliography}{}

\bib{Bao26}{article}{
   title={Total positive and symmetric spaces}, 
   author={Bao, H.},
   year={2026},
   eprint={https://arxiv.org/abs/2606.25516},
}

\bib{BFZ05}{article}{
   author={Berenstein, A.},
   author={Fomin, S.},
   author={Zelevinsky, A.},
   title={Cluster algebras III: Upper bounds and double Bruhat cells},
   journal={Duke Math. J.},
   volume={126},
   number={1},
   date={2005},
   pages={1--52},
}

\bib{BZ05}{article}{
   author={Berenstein, A.},
   author={Zelevinsky, A.},
   title={Quantum cluster algebras},
   journal={Adv. Math.},
   volume={195},
   date={2005},
   number={2},
   pages={405--455},
}

\bib{Dem08}{article}{
   author={Demonet, L.},
   title={Alg\`ebres amass\'ees et alg\`ebres pr\'eprojectives: le cas non
   simplement lac\'e},
   language={French, with English and French summaries},
   journal={C. R. Math. Acad. Sci. Paris},
   volume={346},
   date={2008},
   number={7-8},
   pages={379--384},
}

\bib{Dup08}{article}{
   author={Dupont, G.},
   title={An approach to non-simply laced cluster algebras},
   journal={J. Algebra},
   volume={320},
   date={2008},
   number={4},
   pages={1626--1661},
}

\bib{EE05}{article}{
   author={Enriquez, B.},
   author={Etingof, P.},
   title={Quantization of classical dynamical $r$-matrices with nonabelian base},
   journal={Comm. Math. Phys.},
   volume={254},
   date={2005},
   number={3},
   pages={603--650},
}

\bib{EL07}{article}{
   author={Evens, S.},
   author={Lu, J.-H.},
   title={Poisson geometry of the Grothendieck resolution of a complex semisimple group},
   language={English, with English and Russian summaries},
   journal={Mosc. Math. J.},
   volume={7},
   date={2007},
   number={4},
   pages={613--642, 766},
}

\bib{FG06}{article}{
   author={Fock, V. V.},
   author={Goncharov, A. B.},
   title={Cluster $\scr X$-varieties, amalgamation, and Poisson-Lie groups},
   conference={
      title={Algebraic geometry and number theory},
   },
   book={
      series={Progr. Math.},
      volume={253},
      publisher={Birkh\"auser Boston, Boston, MA},
   },
   date={2006},
   pages={27--68},
}

\bib{FST12}{article}{
   author={Felikson, A.},
   author={Shapiro, M.},
   author={Tumarkin, P.},
   title={Cluster algebras of finite mutation type via unfoldings},
   journal={Int. Math. Res. Not. IMRN},
   date={2012},
   number={8},
   pages={1768--1804},
}

\bib{FWZ21a}{article}{
   title={Introduction to Cluster Algebras. Chapters 4-5}, 
   author={Fomin, S.},
   author={Williams, L.},
   author={Zelevinsky, A.},
   year={2021},
   eprint={https://arxiv.org/abs/1707.07190},
}

\bib{FWZ21b}{article}{
   title={Introduction to Cluster Algebras. Chapter 6}, 
   author={Fomin, S.},
   author={Williams, L.},
   author={Zelevinsky, A.},
   year={2021},
   eprint={https://arxiv.org/abs/2008.09189},
}

\bib{FZ99}{article}{
   author={Fomin, S.},
   author={Zelevinsky, A.},
   title={Double Bruhat cells and total positivity},
   journal={J. Amer. Math. Soc.},
   volume={12},
   date={1999},
   number={2},
   pages={335--380},
}

\bib{FZ02}{article}{
   author={Fomin, S.},
   author={Zelevinsky, A.},
   title={Cluster algebras. I. Foundations},
   journal={J. Amer. Math. Soc.},
   volume={15},
   date={2002},
   number={2},
   pages={497--529},
}

\bib{FZ03}{article}{
   author={Fomin, S.},
   author={Zelevinsky, A.},
   title={Cluster algebras. II. Finite type classification},
   journal={Invent. Math.},
   volume={154},
   date={2003},
   number={1},
   pages={63--121},
}

\bib{Fra16}{article}{
   author={Fraser, C.},
   title={Quasi-homomorphisms of cluster algebras},
   journal={Adv. in Appl. Math.},
   volume={81},
   date={2016},
   pages={40--77},
}

\bib{GLS26}{article}{
   author={Galashin, P.},
   author={Lam, T.},
   author={Sherman-Bennett, M.},
   title={Braid variety cluster structures, II: general type},
   journal={Invent. Math.},
   volume={243},
   date={2026},
   number={3},
   pages={1079--1127},
}

\bib{GP93}{article}{
   author={Geck, M.},
   author={Pfeiffer, G.},
   title={On the irreducible characters of Hecke algebras},
   journal={Adv. Math.},
   volume={102},
   date={1993},
   number={1},
   pages={79--94},
}

\bib{GLS11}{article}{
   author={Gei\ss, C.},
   author={Leclerc, B.},
   author={Schr\"oer, J.},
   title={Kac-Moody groups and cluster algebras},
   journal={Adv. Math.},
   volume={228},
   date={2011},
   number={1},
   pages={329--433},
}

\bib{GSV10}{book}{
   author={Gekhtman, M.},
   author={Shapiro, M.},
   author={Vainshtein, A.},
   title={Cluster algebras and Poisson geometry},
   series={Mathematical Surveys and Monographs},
   volume={167},
   publisher={American Mathematical Society, Providence, RI},
   date={2010},
   pages={xvi+246},
}

\bib{GHKK18}{article}{
   author={Gross, M.},
   author={Hacking, P.},
   author={Keel, S.},
   author={Kontsevich, M.},
   title={Canonical bases for cluster algebras},
   journal={J. Amer. Math. Soc.},
   volume={31},
   date={2018},
   number={2},
   pages={497--608},
}

\bib{IM24}{article}{
   author={Ip, I. C.-H.},
   author={Man, R.},
   title={Positive representations with zero Casimirs},
   journal={Comm. Math. Phys.},
   volume={405},
   date={2024},
   number={4},
   pages={Paper No. 91, 60},
}

\bib{Kac83}{book}{
   author={Kac, V. G.},
   title={Infinite-dimensional Lie algebras},
   series={Progress in Mathematics},
   volume={44},
   note={An introduction},
   publisher={Birkh\"auser Boston, Inc., Boston, MA},
   date={1983},
   pages={xvi+245},
   isbn={0-8176-3118-6},
   doi={10.1007/978-1-4757-1382-4},
}

\bib{Lu14}{article}{
   author={Lu, J.-H.},
   title={On the $T$-leaves and the ranks of a Poisson structure on twisted conjugacy classes},
   journal={Indag. Math. (N.S.)},
   volume={25},
   date={2014},
   number={5},
   pages={1102--1121},
}

\bib{Lus93}{book}{
   author={Lusztig, G.},
   title={Introduction to quantum groups},
   series={Progress in Mathematics},
   volume={110},
   publisher={Birkh\"{a}user Boston, Inc., Boston, MA},
   date={1993},
   pages={xii+341},
}

\bib{Mul13}{article}{
   author={Muller, G.},
   title={Locally acyclic cluster algebras},
   journal={Adv. Math.},
   volume={233},
   date={2013},
   pages={207--247},
}

\bib{Oya25}{article}{
   author={Oya, H.},
   title={A note on a cluster structure of the coordinate ring of a simple algebraic group},
   year={2025},
   eprint={https://arxiv.org/abs/2504.09011}, 
}

\bib{Qin25}{article}{
   title={Analogs of the dual canonical bases for cluster algebras from Lie theory}, 
   author={Qin, F.},
   year={2025},
   eprint={https://arxiv.org/abs/2407.02480},
}

\bib{QY25}{article}{
    author={Qin, F.},
    author={Yakimov, M.},
    title={Partially compactified quantum cluster structures on simple algebraic groups and the full Berenstein--Zelevinsky conjecture}, 
    year={2025},
    eprint={https://arxiv.org/abs/2504.05134},
}

\bib{SW21}{article}{
   author={Shen, L.},
   author={Weng, D.},
   title={Cluster structures on double Bott--Samelson cells},
   journal={Forum Math. Sigma},
   volume={9},
   date={2021},
   pages={Paper No. e66, 89},
}

\bib{Spr85}{article}{
   author={Springer, T. A.},
   title={Some results on algebraic groups with involutions},
   conference={
      title={Algebraic groups and related topics},
      address={Kyoto/Nagoya},
      date={1983},
   },
   book={
      series={Adv. Stud. Pure Math.},
      volume={6},
      publisher={North-Holland, Amsterdam},
   },
   date={1985},
   pages={525--543},
}

\bib{STW16}{article}{
   author={Sturmfels, B.},
   author={Tsukerman, E.},
   author={Williams, L.},
   title={Symmetric matrices, Catalan paths, and correlations},
   journal={J. Combin. Theory Ser. A},
   volume={144},
   date={2016},
   pages={496--510},
}

\bib{SY25}{article}{
   title={Reductive monoids and cluster algebras}, 
   author={Song, J.},
   author={Ye, J. Y.},
   year={2025},
   eprint={https://arxiv.org/abs/2512.09564},
}

\bib{Zho20}{article}{
   author={Zhou, Y.},
   title={Cluster structures and subfans in scattering diagrams},
   journal={SIGMA Symmetry Integrability Geom. Methods Appl.},
   volume={16},
   date={2020},
   pages={Paper No. 013, 35},
}

\end{thebibliography}
\end{document}